\documentclass[11pt,twoside,reqno]{amsart}



\usepackage{microtype}
\usepackage[OT1]{fontenc}
\usepackage{type1cm}
\usepackage{amssymb}
\usepackage{xcolor}
\usepackage{mathrsfs}
\usepackage{tikz}
\usepackage{subcaption}

\usetikzlibrary{arrows,decorations.markings}




\numberwithin{equation}{section}

\theoremstyle{plain}
\newtheorem{theorem}{Theorem}[section]

\newtheorem{corollary}[theorem]{Corollary}
\newtheorem{proposition}[theorem]{Proposition}

\newtheorem{lemma}[theorem]{Lemma}

\theoremstyle{remark}
\newtheorem{remark}[theorem]{Remark}

\theoremstyle{definition}

\newcommand{\BB}{\mathcal{B}}

\newcommand{\PP}{\mathcal{P}}
\newcommand{\DD}{\mathcal{D}}
\newcommand{\MM}{\mathcal{M}}
\newcommand{\EE}{\mathcal{E}}
\newcommand{\CC}{\mathcal{C}}

\newcommand{\R}{\mathbb{R}}
\newcommand{\RP}{\mathbb{RP}^1}

\newcommand{\N}{\mathbb{N}}

\newcommand{\iii}{\mathtt{i}}
\newcommand{\jjj}{\mathtt{j}}
\newcommand{\kkk}{\mathtt{k}}

\newcommand{\eps}{\varepsilon}
\newcommand{\fii}{\varphi}

\newcommand{\A}{\mathsf{A}}

\newcommand{\dd}{\,\mathrm{d}}
\providecommand{\norm}[1]{\|#1\|}

\DeclareMathOperator{\dimh}{dim_H}

\DeclareMathOperator{\diml}{dim_L}

\DeclareMathOperator{\Var}{Var}

\begin{document}

\title{Birkhoff and Lyapunov spectra on planar self-affine sets}

\author{Bal\'azs B\'ar\'any}
\address[Bal\'azs B\'ar\'any]
        {Budapest University of Technology and Economics \\
         MTA-BME Stochastics Research Group \\
         P.O.\ Box 91 \\
         1521 Budapest \\
         Hungary}
\email{balubsheep@gmail.com}

\author{Thomas Jordan}
\address[Thomas Jordan]
        {School of Mathematics \\
         University of Bristol \\
         Bristol \\
         BS8 1SN \\
         United Kingdom}
\email{thomas.jordan@bristol.ac.uk}

\author{Antti K\"aenm\"aki}
\address[Antti K\"aenm\"aki]
        {Department of Physics and Mathematics \\
         University of Eastern Finland \\
         P.O.\ Box 111 \\
         FI-80101 Joensuu \\
         Finland}
\email{antti.kaenmaki@uef.fi}

\author{Micha\l{} Rams}
\address[Michal Rams]
        {Institute of Mathematics \\
         Polish Academy of Sciences \\
         S\'niadeckich 8 \\
         00-656 Warsaw \\
         Poland}
\email{rams@impan.pl}

\subjclass[2010]{Primary 28A80; Secondary 37L30, 37D35, 28D20.}
\keywords{Self-affine set, multifractal analysis, Birkhoff average, Lyapunov exponent, thermodynamic formalism}

\begin{abstract}
  Working on strongly irreducible planar self-affine sets satisfying the strong open set condition, we calculate the Birkhoff spectrum of continuous potentials and the Lyapunov spectrum.
\end{abstract}

\maketitle

\section{Introduction}

Let $\Sigma = \{ 1,\ldots,N \}^\N$ be the collection of all infinite words obtained from letters $\{ 1,\ldots,N \}$, and let $\sigma$ be the left-shift operator on $\Sigma$. The classical theorem of Birkhoff states that if $\mu$ is an ergodic $\sigma$-invariant probability measure, then $\frac1n \sum_{k=0}^{n-1} \Phi(\sigma^k\iii)$ converges to the average $\int_\Sigma \Phi \dd\mu$ of $\Phi$ for every $L^1$ potential $\Phi\colon\Sigma\to\R^M$ and for $\mu$-almost every $\iii \in \Sigma$. However, there are plenty of ergodic $\sigma$-invariant measures, for which the limit exists but converges to a different quantity. Furthermore, there are many words $\iii \in \Sigma$ which are not generic for any ergodic measure or even for which the limit $\lim_{n\to\infty}\frac1n \sum_{k=0}^{n-1} \Phi(\sigma^k\iii)$ does not exist at all. Thus, one may ask how rich is the set of points
$$
E_\Phi(\alpha) = \Bigl\{\iii\in\Sigma:\tfrac1n \sum_{k=0}^{n-1}\Phi(\sigma^k\iii)\to\alpha\text{ as }n\to\infty\Bigr\}.
$$
This 'richness' is usually calculated in terms of topological entropy or Hausdorff dimension. For a $\sigma$-invariant set $A\subset\Sigma$ we denote the topological entropy of $\sigma$ on $A$ by $h_{\rm top}(A)$ and the Hausdorff dimension of a set $A\subset\R^d$ by $\dimh(A)$. For the precise definitions, see Bowen~\cite{Bowennotbook} and Mattila~\cite{Mattila1995}. The functions $\alpha\mapsto h_{\rm top}(E_\Phi(\alpha))$ and $\alpha\mapsto\dimh(E_\Phi(\alpha))$ are respectively called the topological entropy spectrum and Hausdorff dimension spectrum of the Birfhoff averages of $\Phi$. For simplicity they are also called the Birkhoff spectrum of $\Phi$.

The topological entropy spectrum of Birkhoff averages has been intensely studied by several authors and is well understood for continuous potentials and vector valued continuous potentials; see Takens and Verbitskiy~\cite{TV}, Barreira, Saussol, and Schmeling~\cite{BarreiraSaussolSchmeling} and Feng, Fan, and Wu \cite{FanFengWu}, where \cite{FanFengWu} considers the endpoints of the spectrum.

In order to be able to study the Hausdorff dimension of the sets, where the limit of the Birkhoff average exists and takes a predefined value, we need to introduce a geometrical structure. The simplest example for such a geometrical structure is a self-similar iterated function system satisfying some separation condition.

Let $f_i\colon\R^d\to\R^d$ be contracting homeomorphisms for $i\in\{1,\ldots,N\}$. It is well known that there exists a non-empty, compact set $X\subset\R^d$ such that $X=\bigcup_{i=1}^Nf_i(X)$. Moreover, there is a H\"older continuous map $\pi\colon\Sigma\to X$, defined by $\pi(i_1i_2\cdots)=\lim_{n\to\infty}f_{i_1}\circ\cdots\circ f_{i_n}(0)$. If $f_i(X)\cap f_j(X)=\emptyset$ for $i\neq j$, then the inverses of $f_i$'s are well-defined. Therefore, $\pi$ is invertible and its inverse is also H\"older continuous. Thus, the mapping $T\colon X\to X$, $T(\pi(\iii))=\pi(\sigma\iii)$, is well-defined, and one can study Hausdorff dimension of the sets
$$
\pi E_\Phi(\alpha) = \Bigl\{x\in X:\lim_{n\to\infty}\tfrac1n \sum_{k=0}^{n-1}\Phi\circ\pi^{-1}(T^k(x))=\alpha\Bigr\}.
$$
Barreira and Saussol~\cite{BarreiraSaussol}, Feng, Lau, and Wu~\cite{FengLauWu}, and Olsen~\cite{Olsen} studied the setting where the $f_i$'s are conformal. In \cite{BarreiraSaussol}, the function $\Phi$ is H\"{o}lder continuous with codomain $\R$, the paper \cite{FengLauWu} considers the case where $\Phi$ may be continuous and addresses the endpoints of the spectrum, and \cite{Olsen} considers far more general $\Phi$ including the case where the codomain is $\R^d$.

In the conformal situation, a particular case of the Birkhoff averages, which is especially interesting from the dynamical systems' theory point of view, is the Lyapunov spectrum, which is obtained by taking the potential $\iii\mapsto-\log\|D_{\pi(\sigma\iii)}f_{\iii|_1}\|$. The Birkhoff average of this potential is called the Lyapunov exponent, and it is denoted by $\chi(\iii)$. The Lyapunov spectrum satisfies
$$
\dimh(\pi E_\chi(\alpha))=\frac{h_{\rm top}(E_{\chi}(\alpha))}{\alpha},
$$
where $E_\chi(\alpha)$ is the set of points $\iii\in\Sigma$ for which $\chi(\iii)=\alpha$.

The goal of this paper is to investigate the non-conformal situation. We consider the linear case, that is, the mappings $f_i$ are assumed to satisfy $f_i(x)=A_ix+v_i$, where $v_i\in\R^d$ and $A_i\in GL_d(\R)$ so that $\|A_i\|<1$ for all $i\in\{1,\ldots,N\}$. In this case, we have at the moment a very limited knowledge on the Hausdorff spectrum of Birkhoff averages. Barral and Mensi \cite{BarralMensi} studied the Birkhoff spectrum on Bedford-McMullen carpets. This result was generalised for Gatzouras-Lalley carpets by Reeve~\cite{Reeve}. Moreover, Jordan and Simon \cite{JordanSimon} studied the case of planar affine iterated function systems with diagonal matrices for generic translation vectors $v_i$. As far as we are aware of, the only known result about non-diagonal matrices comes from K\"aenm\"aki and Reeve~\cite{KaenmakiReeve2014}, who investigated irreducible matrices with generic translation vectors (see Section~\ref{sec:preli} for the precise definition of irreducibility).

In the $d$-dimensional non-conformal situation, the Lyapunov exponents are in general not given by a Birkhoff average of some potential. However, under some domination conditions, it is still true. Barreira and Gelfert \cite{BarreiraGelfert} studied the topological entropy spectrum of the maximal Lyapunov exponent for non-linear, planar systems under certain domination condition, and Feng~\cite{Feng2009} studied it in the case of non-negative matrices in higher dimensions. Later, Feng and Huang~\cite{FengHuang} calculated the topological entropy spectrum of maximal Lyapunov exponent in high generality. D\'iaz, Gelfert and Rams \cite{DiazGelfertRams} investigated the non-dominated case for $d=2$ providing results on topoligical entropy spectrum of the difference of the Lyapunov exponents and showing that the equality of Lyapunov exponents happens with positive but not maximal topological entropy for generic non-dominated systems.

In the main results of this paper, we calculate the Birkhoff and Lyapunov spectra for planar affine iterated function systems satisfying strong irreducibility and the strong open set condition. After introducing some notation and preliminaries in Section~\ref{subsec:not}, we formulate the results in Section~\ref{subsec:main}. The key idea in the proof of the Birkhoff spectrum is to first calculate it on a family of affine iterated function systems which satisfy the dominated splitting condition. This is done in Section~\ref{sec:domin}. The general case can then be approximated by systems satisfying the dominated splitting; see Section~\ref{sec:subsys}. This idea, with suitable modifications, is also applied in the proof of the Lyapunov spectrum; see Section~\ref{sec:lyap}. Finally, we partially handle the boundary case in Section~\ref{sec:lyapdiag}.

\section{Main results} \label{sec:preli}

Throughout the rest of the paper, we restrict ourselves to planar systems.

\subsection{Notation}\label{subsec:not}

A tuple $\Theta = (A_1+v_1,\ldots,A_N+v_N)$ of contractive invertible affine self-maps on $\R^2$ is called an \emph{affine iterated function system (affine IFS)}. The associated tuple of matrices $(A_1,\ldots,A_N)$ is therefore an element of $GL_2(\R)^N$ and satisfies $\max_{i \in \{1,\ldots,N\}}\|A_i\|<1$. As already mentioned in the introduction, there exists a unique non-empty compact set $X \subset \R^2$ such that
\begin{equation*}
  X = \bigcup_{i=1}^N (A_i+v_i)(X).
\end{equation*}
In this case, the set $X$ is called a \emph{self-affine set}. We say that $\Theta$ satisfies a \emph{strong open set condition (SOSC)} if there exists an open set $U \subset \R^2$ intersecting $X$ such that the union $\bigcup_{i=1}^N (A_i+v_i)(U)$ is pairwise disjoint and is contained in $U$. Furthermore, $\Theta$ satisfies the \emph{strong separation condition (SSC)} if $(A_i+v_i)(X) \cap (A_j+v_j)(X) = \emptyset$ whenever $i \ne j$.

We say that $\A = (A_1,\ldots,A_N) \in GL_2(\R)^N$ is \emph{irreducible} if there does not exist a $1$-dimensional linear subspace $V$ such that $A_iV=V$ for all $i \in \{ 1,\ldots,N \}$; otherwise $\A$ is \emph{reducible}. The tuple $\A$ is \emph{strongly irreducible} if there does not exist a finite union of $1$-dimensional subspaces, $V$, such that $A_iV=V$ for all $i\in\{1,\ldots,N\}$. In a reducible tuple $\A$, all the matrices are simultaneously upper triangular in some basis. For tuples with more than one element, strong irreducibility is a generic property.

We say that $\A = (A_1,\ldots,A_N) \in GL_2(\R)^N$ is \emph{normalized} if $|\det(A_i)|=1$ for $i\in\{1,\ldots,N\}$. A matrix $A$ is called \emph{hyperbolic} if it has two real eigenvalues with different absolute value, \emph{elliptic} if it has two complex eigenvalues, and \emph{parabolic} if it is neither elliptic nor hyperbolic. The subgroup generated by $\mathsf{A}$ is \emph{relatively compact} if and only if the generated subgroup contains only elliptic matrices or orthogonal parabolic matrices. Otherwise we call it non-compact. For a hyperbolic matrix $A$, let $s(A)$ be the eigenspace corresponding to the eigenvalue with the largest absolute value and let $u(A)$ be the eigenspace corresponding to the smallest absolute valued eigenvalue.

Let $\Sigma = \{ 1,\ldots,N \}^\N$ be the collection of all infinite words obtained from integers $\{ 1,\ldots,N \}$. If $\iii = i_1i_2\cdots \in \Sigma$, then we define $\iii|_n = i_1 \cdots i_n$ for all $n \in \N$. The empty word $\iii|_0$ is denoted by $\varnothing$. Define $\Sigma_n = \{ \iii|_n : \iii \in \Sigma \}$ for all $n \in \N$ and $\Sigma_* = \bigcup_{n \in \N} \Sigma_n \cup \{ \varnothing \}$. Thus $\Sigma_*$ is the collection of all finite words. 
The length of $\iii \in \Sigma_* \cup \Sigma$ is denoted by $|\iii|$. The longest common prefix of $\iii,\jjj \in \Sigma_* \cup \Sigma$ is denoted by $\iii \wedge \jjj$. The concatenation of two words $\iii \in \Sigma_*$ and $\jjj \in \Sigma_* \cup \Sigma$ is denoted by $\iii\jjj$. Let $\sigma$ be the left shift operator defined by $\sigma\iii = i_2i_3\cdots$ for all $\iii = i_1i_2\cdots \in \Sigma$. If $\iii \in \Sigma_n$ for some $n$, then we set $[\iii] = \{ \jjj \in \Sigma : \jjj|_n = \iii \}$. The set $[\iii]$ is called a \emph{cylinder set}. The \emph{shift space} $\Sigma$ is compact in the topology generated by the cylinder sets. Moreover, the cylinder sets are open and closed in this topology and they generate the Borel $\sigma$-algebra.

Write $A_\iii = A_{i_1} \cdots A_{i_n}$ for all $\iii = i_1 \cdots i_n \in \Sigma_n$ and $n \in \N$. The canonical projection $\pi\colon \Sigma \to X$ is defined by $\pi(\iii) = \sum_{n=1}^\infty A_{\iii|_{n-1}} v_{i_n}$ for all $\iii = i_1i_2\cdots \in \Sigma$. It is easy to see that $\pi(\Sigma) = X$. If $\mu$ is a measure on $\Sigma$, then we denote the pushforward measure of $\mu$ under $\pi$ by $\pi_*\mu=\mu\circ\pi^{-1}$.

Let $\MM_\sigma(\Sigma)$ denote the collection of all $\sigma$-invariant probability measures on $\Sigma$ and $\EE_\sigma(\Sigma)$ be the collection of ergodic elements in $\MM_\sigma(\Sigma)$. Let $\mu \in \MM_\sigma(\Sigma)$ and recall that the \emph{Kolmogorov-Sinai entropy} of $\mu$ and $\sigma$ is
\begin{equation*}
  h(\mu) := h(\mu,\sigma) = -\lim_{n \to \infty} \tfrac{1}{n} \sum_{\iii \in \Sigma_n} \mu([\iii]) \log\mu([\iii]).
\end{equation*}  A probability measure $\mu$ on $(\Sigma,\sigma)$ is \emph{Bernoulli} if there exist a probability vector $(p_1,\ldots,p_N)$ such that
$$
  \mu([\iii])=p_{i_1}\cdots p_{i_{n}}
$$
for all $\iii=i_1\cdots i_n\in\Sigma_n$ and $n \in \N$. It is well-known that Bernoulli measures are ergodic. We say that $\mu \in \MM_{\sigma^n}(\Sigma)$ is an \emph{$n$-step Bernoulli} if it is a Bernoulli measure on $(\Sigma,\sigma^n)$. In this case, we write
\begin{equation}\label{nbern}
\tilde{\mu}=\tfrac{1}{n}\sum_{k=0}^{n-1}\mu\circ\sigma^{-k}
\end{equation}
and note that $\tilde{\mu}\in\EE_\sigma(\Sigma)$, $h(\mu,\sigma^n)=nh(\tilde{\mu},\sigma)$, and $\int_\Sigma S_nf\dd\mu=n\int_\Sigma f\dd\tilde{\mu}$ for all continuous $f\colon\Sigma\to\R$, where $S_nf=\sum_{k=0}^{n-1}f\circ\sigma^{k}$ is the \emph{Birkhoff sum}.
In addition, if $\mathsf{A} = (A_1,\ldots,A_N) \in GL_2(\R)^N$ and $\mu \in \MM_\sigma(\Sigma)$, then we define the \emph{Lyapunov exponents} of $\A$ with respect to $\mu$ and $\sigma$ to be
\begin{equation} \label{eq:lyapunov-def}
\begin{split}
  \chi_1(\mu):=\chi_1(\mu,\sigma) &= -\lim_{n \to \infty} \tfrac{1}{n} \int_\Sigma \log\|A_{\iii|_n}\| \dd\mu(\iii), \\
  \chi_2(\mu):=\chi_2(\mu,\sigma) &= -\lim_{n \to \infty} \tfrac{1}{n} \int_\Sigma \log\|A_{\iii|_n}^{-1}\|^{-1} \dd\mu(\iii).
\end{split}
\end{equation}
We define a function $\chi \colon \MM_\sigma(\Sigma) \to \R^2$ by setting $\chi(\mu)=(\chi_1(\mu),\chi_2(\mu))$ for all $\mu \in \MM_\sigma(\Sigma)$. The \emph{Lyapunov exponents} at $\iii \in \Sigma$ are defined by
\begin{align*}
  \underline{\chi}_1(\iii) &= -\liminf_{n\to\infty}\tfrac{1}{n}\log\|A_{\iii|_n}\|, \\
  \overline{\chi}_1(\iii) &= -\limsup_{n\to\infty}\tfrac{1}{n}\log\|A_{\iii|_n}\|, \\
  \underline{\chi}_2(\iii) &= -\liminf_{n\to\infty}\tfrac{1}{n}\log\|A_{\iii|_n}^{-1}\|^{-1}, \\
  \overline{\chi}_2(\iii) &= -\limsup_{n\to\infty}\tfrac{1}{n}\log\|A_{\iii|_n}^{-1}\|^{-1}.
\end{align*}
If $\underline{\chi}_k(\iii)=\overline{\chi}_k(\iii)$, then we write $\chi_k(\iii)$ for the common value. Recall that, by Kingman's subadditive ergodic theorem, if $\mu \in \EE_\sigma(\Sigma)$, then $(\chi_1(\iii),\chi_2(\iii)) = \chi(\mu)$ for $\mu$-almost all $\iii \in \Sigma$. Since any two different ergodic measures are mutually singular, it is an interesting question to try to determine the size of a level set
$$
  E_\chi(\alpha) = \{\iii\in\Sigma : (\chi_1(\iii),\chi_2(\iii))=\alpha\}
$$
for a given value $\alpha$ from the set
\begin{equation*}
  \PP(\chi) = \left\{\alpha\in\R^2 : \iii\in\Sigma\text{ and }(\chi_1(\iii),\chi_2(\iii))=\alpha\right\}.
\end{equation*}
The \emph{Lyapunov dimension} of $\mu \in \MM_\sigma(\Sigma)$ is defined to be
\begin{equation*}
  \diml(\mu) = \min\biggl\{ \frac{h(\mu)}{\chi_1(\mu)}, 1+\frac{h(\mu)-\chi_1(\mu)}{\chi_2(\mu)} \biggr\}.
\end{equation*}

We say that a potential $\fii \colon \Sigma_* \to [0,\infty)$ is submultiplicative if
\begin{equation*}
  \fii(\iii\jjj) \le \fii(\iii)\fii(\jjj)
\end{equation*}
for all $\iii,\jjj \in \Sigma_*$. Recall that if $A \in GL_2(\R)$, then the lengths of the semiaxes of the ellipse $A(B(0,1))$ are given by $\|A\|$ and $\|A^{-1}\|^{-1}$. We define the \emph{singular value function} with parameter $s$ to be
\begin{equation*}
  \fii^s(A) =
  \begin{cases}
    \|A\|^s, &\text{if } 0 \le s < 1, \\
    \|A\| \|A^{-1}\|^{-(s-1)}, &\text{if } 1 \le s < 2, \\
    |\det(A)|^{s/2}, &\text{if } 2 \le s < \infty.
  \end{cases}
\end{equation*}
Intuitively, $\fii^s(A)$ represents a measurement of the $s$-dimensional volume of the image of the unit ball under $A$. Since $\fii^s(A) = |\det(A)|^{s-1}\|A\|^{2-s}$ for $1 \le s < 2$, we see that the function $\iii \mapsto \fii^s(A_\iii)$ defined on $\Sigma_*$ is submultiplicative. By a slight abuse of notation, if the tuple $(A_1,\ldots,A_N) \in GL_d(\R)^N$ is clear from the content, we refer to the function $\iii \mapsto \fii^s(A_\iii)$ also by $\fii^s$.

We also need a more general class of potentials. Define
$$
\psi^q(\iii)=\|A_\iii\|^{q_1}\|A_\iii^{-1}\|^{-q_2}
$$
for all $q = (q_1,q_2) \in \R^2$. This is a generalisation of $\varphi^s$: by taking
\begin{equation}\label{eq:s'}
s'(s)=\begin{cases}
     (s,0),     &\text{if } 0\le s<1, \\
     (1,s-1), &\text{if } 1\le s<2, \\
     (s/2,s/2),   &\text{if } 2\le s < \infty,
   \end{cases}
\end{equation}
it is easy to see that $\psi^{s'(s)}=\varphi^s.$

If $\Phi \colon \Sigma \rightarrow \R$ is continuous, then we define the \emph{pressure} by
$$
  P(\log\varphi^s+\Phi)=\lim_{n\to\infty}\tfrac{1}{n}\log\sum_{\iii\in\Sigma_n}\varphi^s(\iii)\sup_{\jjj\in[\iii]}\exp(S_n\Phi(\jjj)),
$$
where the limit exists by subadditivity. For $q=(q_1,q_2)\in\R^2$, one defines the pressure by
$$
  P(\log\psi^q)=\lim_{n\to\infty}\frac{1}{n}\log\sum_{\iii\in\Sigma_n}\psi^q(\iii).
$$
Note that if $q_1 \ge q_2$, then $\psi^q$ is submultiplicative, and if $q_1 < q_2$, then $\psi^q$ is supermultiplicative, which also guarantees the existence of the limit.

Given a continuous potential $\Phi \colon \Sigma \to \R^M$, we let
\begin{align*}
  \PP(\Phi) &= \{\alpha\in\R^M : \iii\in\Sigma\text{ and }\lim_{n\to\infty}\tfrac1n S_n\Phi(\iii)=\alpha\} \\
  &= \{\alpha\in\R^M : \mu \in \MM_\sigma(\Sigma) \text{ and } \int_\Sigma\Phi \dd\mu = \alpha\}
\end{align*}
be the set of possible values of Birkhoff averages. Note that the equality above follows from \cite[Theorem~2.1.6 and Remark~2.1.15]{PrzytyckiUrbanski}. Write
\begin{equation*}
  E_\Phi(\alpha) = \{\iii\in\Sigma : \lim_{n\to\infty}\tfrac{1}{n}S_n\Phi(\iii) = \alpha \}
\end{equation*}
for all $\alpha\in\R^M$.

We use Bowen's definition \cite{Bowennotbook} of topological entropy which is defined for non-compact and non-invariant sets.
It follows from Takens and Verbitskiy \cite[Theorem~5.1]{TV} that
\begin{equation}\label{entropy}
  h_\mathrm{top}(E_\Phi(\alpha))=\lim_{\varepsilon\downarrow 0}\liminf_{n\to\infty}\tfrac{1}{n}\log\#\{\iii|_n\in\Sigma_n : |\alpha-\tfrac{1}{n}S_n\Phi(\iii)|<\varepsilon\},
  \end{equation}
 where $|\cdot|$ denotes the Euclidean norm in $\R^M$.
Note that the result is for continuous potentials having range in $\R$ but it easily extends to the case where the range is in $\R^M$. Recall that, by Bowen~\cite{Bowennotbook}, we have
$$
  h_\mathrm{top}(E)\geq\sup\{h(\mu) :\mu\in\MM_\sigma(\Sigma)\text{ and }\mu(E)=1\}.
$$
The topological closure of a set $A$ is denoted by $\overline{A}$, the boundary by $\partial A$, and the interior by $A^o$.

\subsection{Main theorems}\label{subsec:main}

We are now ready to formulate our main theorems. The results determine the Hausdorff dimensions of the canonical projections of $E_\Phi(\alpha)$ and $E_\chi(\alpha)$.

\begin{theorem}[Birkhoff spectrum]\label{thm:mainbirkhoffint}
Let $(A_1+v_1,\ldots,A_N+v_N)$ be an affine IFS on $\R^2$ satisfying the SOSC and $\Phi\colon\Sigma\to\R^M$ be a continuous potential. If $(A_1,\ldots,A_N) \in GL_2(\R)^N$ is strongly irreducible such that the generated subgroup of the normalized matrices is not relatively compact, then $\PP(\Phi)$ is compact and convex, and
\begin{align*}
  \dimh(\pi E_{\Phi}(\alpha)) &= \sup\{\diml(\mu) :\mu\in\MM_\sigma(\Sigma)\text{ and }\int_\Sigma\Phi \dd\mu = \alpha\} \\
	&= \sup\{\diml(\mu) : \mu\in\EE_\sigma(\Sigma)\text{ and }\int_\Sigma\Phi \dd\mu = \alpha\} \\
  &= \sup\{ s \ge 0 : \inf_{q \in \R^M}P(\log \varphi^s +\left\langle q, \Phi - \alpha \right\rangle) \geq 0 \}
\end{align*}
for all $\alpha\in\PP(\Phi)^o \subset \R^M$. Furthermore, the function $\alpha \mapsto \dimh(\pi E_{\Phi}(\alpha))$ is continuous on $\PP(\Phi)^o$.
\end{theorem}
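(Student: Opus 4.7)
The plan is to follow the strategy announced in the introduction: prove the theorem first for affine IFSs with a dominated splitting, then deduce the general strongly irreducible, non-compact case by approximation from within. Compactness and convexity of $\PP(\Phi)$ are immediate: it is the image of the weak-$*$ compact convex set $\MM_\sigma(\Sigma)$ under the affine continuous map $\mu\mapsto\int\Phi\dd\mu$. The two central inputs are (i) the B\'ar\'any--Hochman--Rapaport exact-dimensionality theorem, which under our hypotheses identifies $\dimh(\pi_*\mu) = \diml(\mu)$ for every ergodic $\mu\in\EE_\sigma(\Sigma)$, and (ii) the subadditive variational principle of K\"aenm\"aki and Feng--Huang, yielding
\begin{equation*}
  P(\log\varphi^s + \langle q, \Phi-\alpha\rangle) = \sup_{\mu\in\MM_\sigma(\Sigma)} \bigl( h(\mu) + \Lambda^s(\mu) + \langle q, {\textstyle\int}\Phi\dd\mu - \alpha\rangle\bigr),
\end{equation*}
where $\Lambda^s(\mu)$ equals $-s\chi_1(\mu)$, $-\chi_1(\mu)-(s-1)\chi_2(\mu)$, or $-\tfrac{s}{2}(\chi_1(\mu)+\chi_2(\mu))$ in the three regimes defining $\varphi^s$. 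Since this objective is jointly affine in $\mu$ and in $q$ and $\MM_\sigma(\Sigma)$ is weak-$*$ compact and convex, Sion's minimax gives $\inf_q P(\log\varphi^s + \langle q,\Phi-\alpha\rangle) = \sup\{h(\mu)+\Lambda^s(\mu) : {\textstyle\int}\Phi\dd\mu = \alpha\}$, and the non-negativity of the latter is precisely $\diml(\mu)\geq s$ for some admissible $\mu$. This collapses the third supremum in the theorem into the first.

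I would then sandwich $\dimh(\pi E_\Phi(\alpha))$ between the two resulting spectra. The lower bound uses that any ergodic $\mu$ with $\int\Phi\dd\mu=\alpha$ concentrates on $E_\Phi(\alpha)$ by Birkhoff's theorem, so $\dimh(\pi E_\Phi(\alpha))\geq \dimh(\pi_*\mu) = \diml(\mu)$; taking a supremum over $\EE_\sigma(\Sigma)$ gives the $\geq$ direction of the second equality, while the $\MM_\sigma$ supremum dominates the $\EE_\sigma$ one trivially and the reverse inequality follows from ergodic decomposition together with the affinity of $h$, $\chi_1$, $\chi_2$ and $\int\Phi\dd(\cdot)$. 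The upper bound is obtained by covering $\pi E_\Phi(\alpha)$ by images of cylinders $[\iii]$ on which $|\tfrac{1}{n}S_n\Phi(\iii)-\alpha| < \varepsilon$, subdividing each $\pi([\iii])$ (which lies in an ellipse of semi-axes $\|A_\iii\|$ and $\|A_\iii^{-1}\|^{-1}$) into roughly round pieces of diameter $\|A_\iii^{-1}\|^{-1}$, and multiplying the Carath\'eodory sum by $\exp\langle q, S_n\Phi(\iii)-n\alpha\rangle\le e^{n\varepsilon|q|}$. The exponential growth rate of the resulting weighted sum is $P(\log\varphi^s + \langle q, \Phi-\alpha\rangle)$, yielding $\dimh(\pi E_\Phi(\alpha))\le \sup\{s : \inf_q P(\log\varphi^s + \langle q,\Phi-\alpha\rangle)\geq 0\}$.

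In the dominated case the ellipses are quasi-multiplicative in $\iii$, so $\log\varphi^s$ and $\Phi$ together form a genuine vector-valued Birkhoff sum and the whole analysis reduces to the Takens--Verbitskiy and Barreira--Saussol--Schmeling multifractal framework combined with \eqref{entropy}. The main obstacle is the passage from the dominated case to the general case. Here I would approximate any $\mu\in\EE_\sigma(\Sigma)$ with $\int\Phi\dd\mu=\alpha$ realising almost the supremum of $\diml$ by $n$-step Bernoulli measures $\nu_k\to\mu$ in weak-$*$ with matching entropy, Lyapunov exponents, and Birkhoff averages; then, using strong irreducibility together with non-compactness of the normalised group, one passes to an $n$ and a finite subsystem $\Gamma\subset\Sigma_n$ whose associated matrices admit a uniform dominated splitting while still supporting a measure close to $\nu_k$. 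Applying the dominated-case theorem to $\Gamma$ gives the lower bound in full generality. Continuity of $\alpha\mapsto\dimh(\pi E_\Phi(\alpha))$ on $\PP(\Phi)^o$ then follows from joint continuity of $P(\log\varphi^s + \langle q,\Phi-\alpha\rangle)$ in $(s,q,\alpha)$ combined with compactness of the minimising $q$ at interior points of $\PP(\Phi)$.
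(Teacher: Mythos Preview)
Your minimax identification of the pressure supremum with the constrained $\MM_\sigma$ supremum is correct and cleaner than the paper's hands-on approach: the objective $h(\mu)+\Lambda^s(\mu)+\langle q,\int\Phi\dd\mu-\alpha\rangle$ is affine in both variables, upper semicontinuous in $\mu$ (since $\Lambda^s(\mu)=\inf_n\tfrac1n\int\log\varphi^s(A_{\iii|_n})\dd\mu$ is an infimum of weak-$*$ continuous functionals and entropy is u.s.c.\ on a full shift), and $\MM_\sigma(\Sigma)$ is compact, so Sion applies.

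However, there is a genuine gap in the lower bound. You invoke B\'ar\'any--Hochman--Rapaport as giving $\dimh(\pi_*\mu)=\diml(\mu)$ for \emph{every} ergodic $\mu$, but the result you may use here (the paper's Theorem~4.4, citing \cite[Theorem~1.2]{bhr2017selfaffine}) is available only for \emph{Bernoulli} measures: for a general ergodic $\mu$ the Ledrappier--Young formula does not collapse to $\diml(\mu)$ without control on the associated Furstenberg measure. Consequently your direct argument ``ergodic $\mu$ with $\int\Phi\dd\mu=\alpha$ gives $\dimh(\pi E_\Phi(\alpha))\ge\dimh(\pi_*\mu)=\diml(\mu)$'' is not justified. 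For the same reason, your ergodic-decomposition deduction of the $\EE_\sigma$ supremum from the $\MM_\sigma$ one does not work: the constraint $\int\Phi\dd\mu=\alpha$ need not pass to any ergodic component, so affinity alone is insufficient.

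This is exactly why the paper goes through the extra step you mention only in passing: one must construct, for each $s$ below the pressure root, an $n$-step Bernoulli measure $\nu$ with $\int\Phi\dd\tilde\nu=\alpha$ \emph{exactly} (so that $\tilde\nu(E_\Phi(\alpha))=1$) and $\diml(\nu)\ge s$. In the dominated case this is done in Proposition~4.3 via approximating $S_n\Phi$ and $S_n\Psi^s$ by locally constant functions, finding the Gibbs/Bernoulli equilibrium states $\mu_q$, and then using a Hopf/degree-theoretic argument (Lemma~4.2 plus a homotopy between $q\mapsto\int(\Phi_n-n\alpha)\dd\mu_q$ and $q\mapsto\int(S_n\Phi-n\alpha)\dd\mu_q$) to hit $\alpha$ on the nose. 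Weak-$*$ approximation of an arbitrary ergodic $\mu$ by $n$-step Bernoulli measures only gives $\int\Phi\dd\nu_k\to\alpha$, which would place $\nu_k$ on $E_\Phi(\alpha_k)$ rather than $E_\Phi(\alpha)$ and leads to a circular appeal to continuity of the spectrum. Once this Bernoulli construction is in hand, the ergodic supremum coincides with the others because the $\tilde\nu$ are themselves ergodic. The passage to the non-dominated case then proceeds as you outline, via the dominated subsystems $\Sigma_n^\DD$ and the pressure approximation of Lemma~5.3.
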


\begin{theorem}[Lyapunov spectrum]\label{thm:mainlyapunovint}
Let $(A_1+v_1,\ldots,A_N+v_N)$ be an affine IFS on $\R^2$ satisfying the SOSC. If $(A_1,\ldots,A_N) \in GL_2(\R)^N$ is strongly irreducible such that the generated subgroup of the normalized matrices is not relatively compact, then
\begin{align*}
  \dimh(\pi E_\chi(\alpha)) &= \sup\{\diml(\mu) : \mu\in\MM_\sigma(\Sigma)\text{ and }\chi(\mu)=\alpha\} \\
	&= \sup\{\diml(\mu) : \mu\in\EE_\sigma(\Sigma)\text{ and }\chi(\mu)=\alpha\} \\
  &= \sup\{ s \ge 0 : \inf_{q \in \R^2}\{P(\log\psi^{s'(s)-q})-\langle q,\alpha \rangle \} \geq 0 \} \\
  &= \min\biggl\{\frac{h_\mathrm{top}(E_\chi(\alpha))}{\alpha_1},1+\frac{h_\mathrm{top}(E_\chi(\alpha))-\alpha_1}{\alpha_2}\biggr\}
\end{align*}
for all $\alpha=(\alpha_1,\alpha_2)\in\PP(\chi)^o \subset \R^2$, where $s'\colon\R_+ \to \R^2$ is defined in \eqref{eq:s'}. Furthermore, if $\alpha\in\PP(\chi)\cap\{(\alpha_1,\alpha_2)\in\R^2:\alpha_1=\alpha_2\}$, then
\begin{equation*}
  \dimh(\pi E_\chi(\alpha))=\lim_{\eps\downarrow 0}\sup\{\diml(\mu) : \mu\in\MM_\sigma(\Sigma) \text{ and }|\chi(\mu)-\alpha|\leq\eps\}.
\end{equation*}
Moreover, $\PP(\chi)$ is convex and compact, and the function $\alpha \mapsto \dimh(\pi E_\chi(\alpha))$ is continuous on $\PP(\chi)^o\cup(\PP(\chi)\cap\{(\alpha_1,\alpha_2) \in \R^2 : \alpha_1=\alpha_2\})$.
\end{theorem}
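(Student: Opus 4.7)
The central difficulty, compared with Theorem~\ref{thm:mainbirkhoffint}, is that $(\chi_1,\chi_2)$ comes from a matrix cocycle and is not the Birkhoff average of any continuous potential; Theorem~\ref{thm:mainbirkhoffint} therefore does not apply directly. My plan is to approximate $\Theta$ from inside by subsystems $\Theta_k$ that admit a \emph{dominated splitting}. On such a subsystem, the leading and trailing Oseledets directions $E^u_\iii,E^s_\iii$ depend Hölder continuously on $\iii$, which produces Hölder potentials
\[
\Phi_1(\iii)=-\log\|A_{i_1}|_{E^u_\iii}\|,\qquad \Phi_2(\iii)=-\log\|A_{i_1}|_{E^s_\iii}\|,
\]
for which $S_n\Phi_1(\iii)=-\log\|A_{\iii|_n}\|+O(1)$ and $S_n\Phi_2(\iii)=-\log\|A_{\iii|_n}^{-1}\|^{-1}+O(1)$, uniformly in $\iii$ and $n$. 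On $\Theta_k$ the Lyapunov exponent map then coincides with the Birkhoff average of $\Phi=(\Phi_1,\Phi_2)$, and Theorem~\ref{thm:mainbirkhoffint} supplies the three variational/pressure equalities for the level sets of $\chi$.

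Fix $\alpha\in\PP(\chi)^o$. I would construct an exhausting sequence of dominated subsystems $\Theta_k$ (finitely many long cylinder words) whose attainable Lyapunov sets fill $\PP(\chi)^o$; their existence rests on strong irreducibility and non-compactness of the generated normalized group, via standard cone-contraction density arguments. Applying Theorem~\ref{thm:mainbirkhoffint} on $\Theta_k$ and letting $k\to\infty$ requires (i) approximating any $\mu\in\MM_\sigma(\Sigma)$ with $\chi(\mu)=\alpha$ by ergodic measures on $\Theta_k$, using the $n$-step Bernoulli construction \eqref{nbern} so that entropy and Lyapunov data converge, and (ii) a matching upper bound on $\dimh(\pi E_\chi(\alpha))$ obtained from \eqref{entropy} together with covering estimates based on the singular value function $\varphi^s$. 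To convert the Birkhoff pressure equation into the Lyapunov one, observe that under domination a direct inspection of the definitions gives
\[
\log\varphi^s(A_\iii)+\langle q,S_n\Phi(\iii)\rangle=\log\psi^{s'(s)-q}(A_\iii)+O(1),
\]
uniformly in $\iii\in\Sigma_n$, which, after subtracting $n\langle q,\alpha\rangle$ and dividing by $n$, identifies the two pressures up to the additive constant $-\langle q,\alpha\rangle$. The last equality, involving $h_\mathrm{top}(E_\chi(\alpha))$, follows because $\diml(\mu)$ is strictly increasing in $h(\mu)$ once $\chi(\mu)=\alpha$ is fixed, so the supremum is attained at the supremum of $h(\mu)$, which in turn equals $h_\mathrm{top}(E_\chi(\alpha))$ by the variational principle for the Lyapunov level set (provable in the same dominated-approximation framework).

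The main obstacle is the boundary case $\alpha_1=\alpha_2$ treated in Section~\ref{sec:lyapdiag}: every dominated subsystem satisfies $\chi_1(\mu)>\chi_2(\mu)$ uniformly for its invariant measures, so the interior approximation cannot reach the Lyapunov-equality line, and the set $\{\mu:\chi(\mu)=\alpha\}$ is either empty or too degenerate to drive the variational argument. The remedy is to realise $\dimh(\pi E_\chi(\alpha))$ as a limit of Lyapunov dimensions attained at nearby interior points, which is exactly the second displayed formula in the statement; the matching upper bound requires constructing orbits that shadow two ergodic measures straddling the diagonal and combining them via an iterated Bernoulli construction. Convexity and compactness of $\PP(\chi)$, and continuity of $\alpha\mapsto\dimh(\pi E_\chi(\alpha))$ on $\PP(\chi)^o$ together with the diagonal, will then follow from linearity of $\mu\mapsto\chi(\mu)$, weak-$*$ compactness of $\MM_\sigma(\Sigma)$, and concavity/continuity of the associated pressure functions.
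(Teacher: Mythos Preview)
Your overall strategy matches the paper's: reduce to dominated subsystems (the paper builds $\Sigma_n^\DD$ via Lemma~\ref{lem:consdomi}), on which $\chi$ becomes the Birkhoff average of the H\"older potential $\Psi$ of \eqref{eq:Psi-def}, apply the Birkhoff result there (Proposition~\ref{cor:lyapdomin}), and pass to the limit via a pressure approximation (Lemma~\ref{lem:pressapproxlyap}). The identity $\log\varphi^s+\langle q,S_n\Phi\rangle=\log\psi^{s'(s)-q}+O(1)$ you write is exactly how the paper links the Birkhoff and Lyapunov pressure formulations.

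There is, however, a genuine gap in your final sentence. You claim that compactness of $\PP(\chi)$ and the boundary continuity ``follow from linearity of $\mu\mapsto\chi(\mu)$ and weak-$*$ compactness of $\MM_\sigma(\Sigma)$''. This fails: $\chi_1$ is only lower semicontinuous and $\chi_2$ only upper semicontinuous in the weak-$*$ topology (each is a $\sup$, respectively $\inf$, of continuous functionals by subadditivity), so a weak-$*$ limit of measures with $\chi(\mu_k)=\alpha^k\to\alpha$ need not satisfy $\chi(\mu)=\alpha$. The paper instead uses a concatenation construction (Propositions~\ref{prop:wmeaslyap} and~\ref{propconcave}): given ergodic $\mu_k$ with $\chi(\mu_k)\to\alpha$, one glues long $\mu_k$-typical blocks using the bounded prefixes and suffixes of Lemma~\ref{lem:consdomi}, so that domination (Lemma~\ref{lem:dominbound}) forces near-multiplicativity of the norms along the concatenated word, producing a measure supported on $E_\chi(\alpha)$ with entropy at least $\limsup_k h(\mu_k)$. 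This single device yields compactness of $\PP(\chi)$, concavity of $\alpha\mapsto h_{\rm top}(E_\chi(\alpha))$, and the diagonal lower bound $\dimh(\pi E_\chi(\alpha))\geq\limsup_k\dimh(\pi E_\chi(\alpha^k))$. Your description of the diagonal case has the right ingredient (shadowing and concatenating measures), but you attribute it to the upper bound; in fact the concatenation is what delivers the \emph{lower} bound, while the upper bound at diagonal points is the easier direction.
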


\subsection{Further discussion}

If the generated subgroup of the normalised matrices is compact, then we have $\chi_{1}(\mu)=\chi_2(\mu)$ for any invariant measure $\mu$. Therefore, Theorem \ref{thm:mainbirkhoffint} is still true in this setting, but rather than following the proof in this paper, it is a simpler approach to use the standard conformal methods. Furthermore, it seems possible to discard the SOSC and consider the overlapping case; see Remark \ref{rem:hochman-rapaport}.

In the conformal setting, both the Lyapunov spectrum and the Birkhoff spectrum have a unique maximum which corresponds to the integral of the measure of maximal dimension. The non-conformal case is more involved. For the Birkhoff spectrum, in the setting of Theorem \ref{thm:mainbirkhoffint}, it is possible to show that there will be at most one maximum. If we let $s=\dimh(X)$ and $\mu$ be the unique ergodic measure with Lyapunov dimension $s$ then for any $\alpha\in\PP(\Phi)^o \subset \R^M$, $\dimh(\pi E_{\Phi}(\alpha))=s$ if and only if $\int_\Sigma\Phi \dd\mu = \alpha$. To see this first note that if $\alpha\in\PP(\Phi)^o \subset \R^M$ and $\int_\Sigma\Phi \dd\mu = \alpha$ then $\dimh(\pi E_{\Phi}(\alpha))=s$ is an immediate consequence of Theorem \ref{thm:mainbirkhoffint}. On the other hand, if $\dimh(\pi E_{\Phi}(\alpha))=s$ then there exists a sequence of invariant measures $\mu_n$ with $\int_\Sigma\Phi \dd\mu_n = \alpha$ for each $n\in\N$ and $\lim_{n\to\infty}\diml(\mu_n)=s$. Any weak$^*$ limit of these measures must be invariant and have integral $\alpha$. By K\"aenm\"aki and Reeve \cite[Proposition 6.8]{KaenmakiReeve2014}, it follows that the Lyapunov dimension must be at least $s$, and so the weak$^*$ limit must be the unique measure of maximal Lyapunov dimension $\mu$ and $\int_\Sigma\Phi \dd\mu = \alpha$.

In Theorem \ref{thm:mainbirkhoffint}, in the case where the self-affine system is dominated and the function $\Phi$ is H\"{o}lder, it is possible to show that the spectrum varies analytically away from integer values. The argument would follow the one given in Barreira and Saussol \cite{BarreiraSaussol} with an adaptation for the case of higher dimensions. The same argument also holds in Theorem \ref{thm:mainlyapunovint} when the system is dominated.

In the situation, where the system is not strongly irreducible, the results are no longer always true; for example, see Reeve \cite{Reeve} where the author considers self-affine carpets. In the diagonal case, it would be possible to combine Hochman \cite{Hochman2014} and Jordan and Simon \cite{JordanSimon} to get results for a large class of systems based on the dimension of projections of measures onto the $x$-axis and $y$-axis.

In addition to the obtained results it is possible to adapt our methods to obtain analogous results for sets with a similar definition to $E_{\chi}(\alpha)$. For example, under the same assumptions as for Theorem \ref{thm:mainlyapunovint}, the sets
$$
E_{\chi,s}(\alpha)=\{\iii\in \Sigma:\lim_{n\to\infty}\tfrac{1}{n}\log\fii^s(\iii)=\alpha\}
$$
could be considered for $s>0$. The method would be very similar to first show the results for dominated system and then deduce the full result by a slight adpatation of Lemma \ref{lem:pressapprox}. This approach would also work for the sets
$$
E_{\underline{\chi}}(\alpha)=\{\iii\in\Sigma:\chi_2(\iii)=\alpha\}.
$$

Lemma~\ref{lem:pressapprox} also gives a simple proof for the continuity of the pressure $P$ with respect to the matrix tuples in the two-dimensional case; see Feng and Shmerkin~\cite{FengShmerkin2014}. Namely, $P$ is upper semicontinuous since, by definition, it is an infimum of continuous functions. Lemma~\ref{lem:pressapprox} implies that it is a supremum of continuous functions, so it is also lower semicontinuous and hence continuous.

In Theorem \ref{thm:mainbirkhoffint}, if $\PP(\Phi)$ has empty interior, then it means that $\PP(\Phi)$ will be contained in some $k$-dimensional hyperplane where $k<m$. If $\PP(\Phi)$ has nonempty interior in this hyperplane then our results still apply. In Theorem \ref{thm:mainlyapunovint}, if $\PP(\chi)$ has empty interior, then, since it is a convex set, it is either contained in a one dimensional hyperplane or a point. In the case, where it is a one-dimensional hyperplane again we can work with the interior when restricted to this line. The proofs for these cases are virtually identical to the given ones, hence, we omit them.

As an important part of the study of the Birkhoff averages, one is also interested in the set
\begin{equation*}
  D_\Phi = \Bigl\{ \iii \in \Sigma : \lim_{n\to\infty}\tfrac{1}{n}\sum_{k=0}^{n-1} \Phi(\sigma^k\iii) \text{ does not exist} \Bigr\}.
\end{equation*}
In the conformal case, it holds that $\dimh(\pi D_\Phi) = \dimh(X)$; see Barreira and Schmeling \cite{BarreiraSchmeling2000}. It would be interesting to see if the same holds also in our setting. The lower bounds we obtain rely on finding the dimension of Bernoulli measures, which cannot help for this question. The same problem appears also in the question whether $\dimh(\pi G_\mu) = \diml(\mu)$, where
\begin{equation*}
  G_\mu = \Bigl\{ \iii \in \Sigma : \tfrac{1}{n}\sum_{k=0}^{n-1} \delta_{\sigma^k\iii} \text{ converges weakly to } \mu \Bigr\}
\end{equation*}
for all $\mu \in \MM_\sigma(\Sigma)$.

For shift spaces the topological entropy spectrum for Birkhoff averages of a continuous function is concave and can be written as a Legendre transform of a suitable pressure function, see Takens and Verbitskiy \cite[Section 6]{TV}. For an expanding interval Markov map the Lyapunov spectrum is a simple transform of the topological entropy spectrum  for a suitable potential. This means that the Lyapunov spectrum in this setting is a straightforward transform of a concave function (however the transform does not necessary preserve the concavity). In the setting of Theorem \ref{thm:mainlyapunovint}, the function $\alpha \mapsto h_{\rm{top}}(E_{\xi(\alpha)})$ will still be concave (see Proposition \ref{propconcave}) and so the Lyapunov spectrum is again a transform of a concave function but again it will not necessarily preserve concavity. It should be possible by careful case by case analysis to determine certain regions for which the Birkhoff spectrum and Lyapunov spectrum are concave but in general they will not be concave.

Finally, it is a natural question to ask whether the results could be extended to higher dimensions. There are two stumbling blocks to this. Firstly the result on the dimension of Bernoulli measures for strongly irreducible systems in B\'ar\'any, Hochman, and Rapaport \cite{bhr2017selfaffine} is only proved in the dimension two (see Hochman and Rapaport \cite[Section 1.6]{hr2019selfaffine} for discussion). Also the approximation via dominated systems becomes much more problematic in higher dimensions.


\section{Upper bound}\label{sec:preli2}

Throughout the paper, our only assumption about the potential $\Phi$ is that it is continuous. A standard lemma gives bounds on the variation of $S_n\Phi$ inside $n$th level cylinders.

\begin{lemma} \label{lem:variance}
For any continuous $\Phi \colon \Sigma\to \R^M$,
\[
\lim_{n\to\infty} \tfrac 1n \max_{\kkk\in\Sigma_n} \max_{\iii, \jjj\in [\kkk]}|S_n\Phi(\iii)-S_n\Phi(\jjj)|=0.
\]
\end{lemma}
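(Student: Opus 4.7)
The plan is to exploit uniform continuity of $\Phi$ on the compact metric space $\Sigma$. Fix $\eps>0$; by uniform continuity there exists $m\in\N$ such that whenever $\iii,\jjj\in\Sigma$ agree on their first $m$ coordinates, $|\Phi(\iii)-\Phi(\jjj)|<\eps$. The boundedness of $\Phi$ (which follows from continuity on the compact space $\Sigma$) gives a constant $M=\sup_{\iii\in\Sigma}|\Phi(\iii)|<\infty$.

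Now fix $\kkk\in\Sigma_n$ and $\iii,\jjj\in[\kkk]$. For each $0\le k\le n-1$, the shifted words $\sigma^k\iii$ and $\sigma^k\jjj$ still agree on their first $n-k$ coordinates. Thus for every $k$ with $n-k\ge m$, i.e. $k\le n-m$, we have $|\Phi(\sigma^k\iii)-\Phi(\sigma^k\jjj)|<\eps$; for the remaining indices we use the trivial bound $2M$. Summing over $k=0,\ldots,n-1$ yields
\[
|S_n\Phi(\iii)-S_n\Phi(\jjj)|\le (n-m+1)\eps + 2M(m-1).
\]
This bound is uniform in the choice of $\kkk\in\Sigma_n$ and of $\iii,\jjj\in[\kkk]$, so dividing by $n$ and taking $n\to\infty$ gives
\[
\limsup_{n\to\infty}\tfrac{1}{n}\max_{\kkk\in\Sigma_n}\max_{\iii,\jjj\in[\kkk]}|S_n\Phi(\iii)-S_n\Phi(\jjj)|\le\eps.
\]
Since $\eps>0$ was arbitrary, the limit equals $0$.

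There is no real obstacle here: the argument is the standard one for Bowen's bounded distortion lemma, and the only ingredients are uniform continuity, compactness, and the elementary observation that the shifted sequences $\sigma^k\iii,\sigma^k\jjj$ continue to agree in the first $n-k$ symbols. No special structure of $\Phi$ (beyond continuity) or of the affine system is needed, so the statement holds in the generality quoted.
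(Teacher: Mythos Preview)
Your proof is correct and follows essentially the same approach as the paper. The paper packages the argument slightly differently by introducing the variation $\Var_k(\Phi)=\max_{\kkk\in\Sigma_k}\max_{\iii,\jjj\in[\kkk]}|\Phi(\iii)-\Phi(\jjj)|$, noting that $\Var_k(\Phi)\to 0$ by compactness, and then bounding the quantity in question by the Ces\`aro average $\tfrac1n\sum_{k=1}^n\Var_k(\Phi)$; your explicit splitting into the first $n-m+1$ terms (each at most $\eps$) and the last $m-1$ terms (each at most $2M$) is exactly the standard way one shows such a Ces\`aro average tends to zero.
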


\begin{proof}
Denote by $\Var_n(\Phi)$ the $n$th variation of the potential $\Phi$, i.e.
\[
\Var_n(\Phi) = \max_{\kkk\in\Sigma_n}\max_{\iii, \jjj\in [\kkk]}|\Phi(\iii)-\Phi(\jjj)|.
\]
By the compactness of $\Sigma$, we have $\Var_n(\Phi)\to 0$. The assertion follows since
\[
\tfrac 1n \max_{\kkk\in\Sigma_n} \max_{\iii, \jjj\in [\kkk]}|S_n\Phi(\iii)-S_n\Phi(\jjj)| \leq \tfrac 1n \sum_{k=1}^n \Var_k(\Phi)
\]
for all $n \in \N$.
\end{proof}

The next proposition gives an upper bound for the dimension of $\pi E_{\Phi}(\alpha)$ by means of the pressure. Its proof is a standard covering argument.

\begin{proposition}\label{thm:birkhoffupper}
  Let $(A_1+v_1,\ldots,A_N+v_N)$ be an affine IFS on $\R^2$ and $\Phi \colon \Sigma \to \R^M$ be a continuous potential. Then
  $$
  \dimh(\pi E_{\Phi}(\alpha))\leq\sup\{ s \ge 0 : \inf_{q \in \R^M}P(\log \varphi^s +\langle q, \Phi - \alpha \rangle) \geq 0 \}
  $$
  for all $\alpha\in\PP(\Phi)^o$.
\end{proposition}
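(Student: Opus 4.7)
The plan is to show that $\dimh(\pi E_\Phi(\alpha))\leq s$ for every $s$ strictly larger than the supremum on the right-hand side; call this supremum $s^*$. Since $X\subset\R^2$ the inequality is trivial once $s\geq 2$, so I may assume $0\leq s<2$. By the definition of $s^*$, for such an $s$ there exists $q\in\R^M$ with $P(\log\varphi^s+\langle q,\Phi-\alpha\rangle)=:-2\delta<0$. Unwinding the definition of pressure then gives
$$
\sum_{\iii\in\Sigma_n}\varphi^s(A_\iii)\sup_{\jjj\in[\iii]}\exp\bigl(\langle q,S_n\Phi(\jjj)-n\alpha\rangle\bigr)\leq e^{-n\delta}
$$
for all sufficiently large $n$. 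This will be the only quantitative input from the hypothesis on $s^*$.

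Next I would decompose $E_\Phi(\alpha)$ into countably many pieces on which convergence of the Birkhoff averages is uniform. Fixing a small $\eps>0$ to be chosen later, set
$$
E_N^\eps=\bigl\{\iii\in\Sigma:|S_n\Phi(\iii)-n\alpha|<\eps n\text{ for all }n\geq N\bigr\},
$$
so $E_\Phi(\alpha)=\bigcup_{N\in\N}E_N^\eps$. Let $\CC_n(N)=\{\kkk\in\Sigma_n:[\kkk]\cap E_N^\eps\neq\emptyset\}$ for $n\geq N$. For each $\kkk\in\CC_n(N)$, picking $\iii\in[\kkk]\cap E_N^\eps$ yields $\exp(\langle q,S_n\Phi(\iii)-n\alpha\rangle)\geq e^{-|q|\eps n}$, and hence
$$
\sum_{\kkk\in\CC_n(N)}\varphi^s(A_\kkk)\leq e^{|q|\eps n}\sum_{\kkk\in\CC_n(N)}\varphi^s(A_\kkk)\sup_{\jjj\in[\kkk]}\exp\bigl(\langle q,S_n\Phi(\jjj)-n\alpha\rangle\bigr)\leq e^{n(|q|\eps-\delta)}.
$$

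Finally I would turn this into a dimension bound by covering $\pi(E_N^\eps)$ with the images of the cylinders in $\CC_n(N)$. Since $\pi([\kkk])$ is contained in a translate of $A_\kkk(B(0,R))$, which is an ellipse with semiaxes comparable to $\|A_\kkk\|$ and $\|A_\kkk^{-1}\|^{-1}$, one uses a single ball of diameter $\asymp\|A_\kkk\|$ when $0\leq s<1$ and $\asymp\|A_\kkk\|\|A_\kkk^{-1}\|$ balls of diameter $\asymp\|A_\kkk^{-1}\|^{-1}$ when $1\leq s<2$; in both cases the $s$-th power of diameters sums to a constant multiple of $\varphi^s(A_\kkk)$. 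Because $\max_{\kkk\in\Sigma_n}\|A_\kkk\|\to 0$, these are arbitrarily fine covers, so the previous display gives $\HH^s_{r_n}(\pi E_N^\eps)\lesssim e^{n(|q|\eps-\delta)}$ with $r_n\downarrow 0$. Choosing $\eps<\delta/\max(1,|q|)$, letting $n\to\infty$, and then taking the countable union over $N$ gives $\HH^s(\pi E_\Phi(\alpha))=0$, whence $\dimh(\pi E_\Phi(\alpha))\leq s$, as required. The argument is essentially a standard covering estimate; the only mildly delicate point is the three-way case split in the definition of $\varphi^s$, which governs which geometric cover of each ellipse one must use to recover exactly the singular-value function appearing in the pressure.
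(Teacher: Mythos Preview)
Your argument is correct and follows essentially the same covering strategy as the paper's proof: pick $s>s^*$, choose $q$ with negative pressure, and bound the $\varphi^s$-sum over the cylinders meeting the level set. The only cosmetic difference is bookkeeping of the $\varepsilon$-loss: you fix $\varepsilon<\delta/|q|$ up front and decompose $E_\Phi(\alpha)=\bigcup_N E_N^\varepsilon$ to work at a single scale, whereas the paper instead absorbs the error into a shifted dimension parameter via $\varphi^{s+c}(\iii)\le \varphi^s(\iii)e^{c|\iii|\log\lambda}$, sums over all large levels, and lets the shift tend to zero.
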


\begin{proof}
Observe that
$$
  E_{\Phi}(\alpha)\subset\bigcap_{r=1}^\infty\bigcup_{n=1}^\infty\bigcap_{m=n}^\infty\bigcup_{\iii\in D_{m,r}}[\iii],
$$
where
$$
  D_{m,r} = \{\iii\in\Sigma_m : \jjj\in[\iii]\text{ and }|\tfrac{1}{m}S_m\Phi(\jjj)-\alpha|<\tfrac{1}{r}\}.
$$
By Lemma \ref{lem:variance}, there exists $\zeta>0$ such that $|\tfrac{1}{m}S_m\Phi(\jjj)-\alpha|<\tfrac{2}{r}$ for all $m\geq -\log\zeta$, $\iii\in\Sigma_m$, and $\jjj\in [\iii]$. Therefore,
\begin{equation} \label{eqn:largem}
  -\frac{2m|q|}{r}\leq\langle q,S_m\Phi(\jjj)-m\alpha\rangle\leq\sup_{\jjj\in[\iii]}\langle q,S_m\Phi(\jjj)-m\alpha\rangle
\end{equation}
for all $\iii\in D_{m,r}$ and $m\geq-\log\zeta$.

Let $s_0(\alpha) = \sup\{ s \ge 0 : \inf_{q \in \R^M}P(\log \varphi^s +\left\langle q, \Phi - \alpha \right\rangle) \geq 0 \}$ and choose $s > s_0(\alpha)$. Thus there exists $q=q(\alpha,s)$ such that $P(\log\varphi^s +\left\langle q, \Phi - \alpha \right\rangle)<0$. Write $P=P(\log\varphi^s +\left\langle q, \Phi - \alpha \right\rangle)$ and let $\gamma>0$ be such that
\begin{equation}\label{eq:sumbound}
  \sum_{\iii \in \Sigma_m}\varphi^s(\iii)\exp(\sup_{\jjj\in[\iii]}\langle q,S_m\Phi(\jjj)-m\alpha\rangle)<e^{mP/2}
\end{equation}
for all $m\geq-\log\gamma$.
As $\|{A_{\iii}}\| \leq \lambda^{|\iii|}$ for some $\lambda<1$, we have
\begin{equation}\label{eq:hyper}
\varphi^{s+c}(\iii) \leq \varphi^s(\iii) e^{c|\iii|\log \lambda}
\end{equation}
for all $c\geq0$.
Hence, for every $\delta<\min\{\gamma,\zeta\}$, we have by \eqref{eq:hyper}, \eqref{eqn:largem} and \eqref{eq:sumbound}
\begin{align*}
\mathcal{H}^{s-2|q|/r\log\lambda}_\delta(\pi E_\Phi(\alpha))&\leq\sum_{m=\lceil-\log\delta\rceil}^{\infty}\sum_{\iii\in D_{m,r}} \varphi^{s-2|q|/r\log\lambda}(\iii)\\
&\leq\sum_{m=\lceil-\log\delta\rceil}^{\infty}\sum_{\iii\in D_{m,r}} \varphi^s(\iii)e^{-2m|q|/r}\\
&\leq\sum_{m=\lceil-\log\delta\rceil}^{\infty}\sum_{\iii\in D_{m,r}} \varphi^s(\iii)\exp(\sup_{\jjj\in[\iii]}\langle q,S_m\Phi(\jjj)-m\alpha\rangle)\\
&\leq\sum_{m=\lceil-\log\delta\rceil}^{\infty}e^{mP/2}.
\end{align*}
By letting $\delta \downarrow 0$, the upper bound above approaches to zero and hence, by letting $r \to \infty$, $\dimh(\pi E_\Phi(\alpha))\leq s$. The proof is finished as $s>s_0(\alpha)$ was arbitrary.
\end{proof}

\section{Birkhoff averages}\label{sec:domin}

We let $\mathbb{RP}^1$ denote the real projective line, which is the set of all lines through the origin in $\R^2$. We call a proper subset $\CC \subset \mathbb{RP}^1$ a \emph{cone} if it is a closed projective interval and a \emph{multicone} if it is a finite union of cones. Let $\mathsf{A} = (A_1,\ldots,A_N) \in GL_2(\R)^N$. We say that $\mathsf{A}$ is \emph{dominated} if there exists a multicone $\CC \subset \RP$ such that $\bigcup_{i=1}^NA_i\CC\subset\CC^o$. By \cite[Theorem B]{BochiGourmelon2009}, $\mathsf{A}$ is dominated if and only if there exist constants $C>0$ and $0<\tau<1$ such that
\begin{equation*}
  \frac{|\det(A_\iii)|}{\|A_\iii\|^2} \le C\tau^n
\end{equation*}
for all $\iii \in \Sigma_n$ and $n \in \N$. Furthermore, if $\mathsf{A}$ is dominated, then, by \cite[Lemma 2.4]{BaranyRams2017}, the mapping $V\colon\Sigma\to\RP$ defined by
\begin{equation}\label{eq:oseledetsspace}
  V(\iii)=\bigcap_{k=0}^{\infty}A_{\iii|_k}\CC
\end{equation}
is H\"older continuous.

\begin{proposition}\label{thm:BGprop}
  If $(A_1,\ldots,A_N) \in GL_2(\R)^N$ is dominated, then there exists $C>0$ such that
  $$
    \|A_\iii|V(\sigma^n\iii)\|\geq C\|A_\iii\|
  $$
  for all $\iii \in \Sigma_n$ and $n \in \N$. In particular, the function $\iii \mapsto \log\|A_{\iii|_1}|V(\sigma\iii)\|$ is H\"older continuous and
  $$
    |\log\|A_{\iii|_n}\|-S_n\log\|A_{\iii|_1}|V(\sigma\iii)\||\leq C
  $$
  for all $\iii \in \Sigma$ and $n \in \N$.
\end{proposition}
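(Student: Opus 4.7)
I would prove the norm comparison $\|A_{\iii|_n}\| \asymp \|A_{\iii|_n}|V(\sigma^n\iii)\|$ first, and then deduce the ``in particular'' part by a short telescoping argument.

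\emph{Step 1 (the norm bound).} Set $\CC'=\bigcup_{i=1}^N A_i\CC$; it is a multicone contained in $\CC^o$ at positive projective distance $\delta>0$ from $\RP\setminus\CC$. The definition $V(\iii)=\bigcap_k A_{\iii|_k}\CC$ immediately yields the equivariance
\[
A_{\iii|_n}(V(\sigma^n\iii))=V(\iii),
\]
and $V(\iii)\subset A_{i_1}\CC\subset\CC'$, so $V(\sigma^n\iii)\in\CC'$ for every $\iii\in\Sigma$ and $n\ge 0$. For unit $v$ in direction $V(\sigma^n\iii)$, the singular value decomposition of $A_{\iii|_n}$ gives
\[
\|A_{\iii|_n}v\|^2=\sigma_1^2\cos^2\theta+\sigma_2^2\sin^2\theta,\qquad \theta=\angle(v,v_1^*(A_{\iii|_n})),
\]
with $\sigma_k=\sigma_k(A_{\iii|_n})$, so the desired inequality reduces to a uniform angular bound $|\cos\theta|\ge c>0$; after transposing, this is equivalent to
\[
\angle(V(\iii),u_1^*(A_{\iii|_n}))\le \tfrac{\pi}{2}-c'.
\]
This is precisely what the cone-contraction characterisation of domination (\cite[Theorem B]{BochiGourmelon2009}) delivers: $|\det A_{\iii|_n}|/\|A_{\iii|_n}\|^2\le C\tau^n$ translates into $\diam A_{\iii|_n}(\CC)\le C'\tau^n$ in the round metric on $\RP$, and comparing the image $A_{\iii|_n}v_0/\|A_{\iii|_n}v_0\|$ for a test vector $v_0\in\CC'$ against $V(\iii)$ and against $u_1^*(A_{\iii|_n})$ (via the SVD formula) places $u_1^*(A_{\iii|_n})$ within $O(\tau^n)$ of $V(\iii)\subset\CC'$. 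Large $n$ is then immediate; the finitely many small $n$ are handled by compactness (for each fixed $n$, the finite set $\Sigma_n$ and the continuous $V$ make $\iii\mapsto\|A_{\iii|_n}v\|/\|A_{\iii|_n}\|$ a continuous positive function on a compact space).

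\emph{Step 2 (telescoping).} Choose unit vectors $v_k\in V(\sigma^k\iii)$ for each $k$. Equivariance $A_{i_{k+1}}(V(\sigma^{k+1}\iii))=V(\sigma^k\iii)$ forces $A_{i_{k+1}}v_{k+1}=\pm\|A_{i_{k+1}}|V(\sigma^{k+1}\iii)\|\,v_k$, so induction on $n$ yields
\[
\log\|A_{\iii|_n}|V(\sigma^n\iii)\|=\sum_{k=0}^{n-1}\log\|A_{i_{k+1}}|V(\sigma^{k+1}\iii)\|=S_n(\log f)(\iii),
\]
where $f(\iii):=\|A_{i_1}|V(\sigma\iii)\|$. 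Hölder continuity of $\log f$ follows from: Hölder continuity of $\iii\mapsto V(\iii)$ (already cited from \cite{BaranyRams2017}), smooth dependence of $v\mapsto\|A_{i_1}|\linspan(v)\|$ on projective coordinates, and the uniform lower bound $\|A_{i_1}|V\|\ge\sigma_2(A_{i_1})>0$ that keeps us away from the singularity of $\log$. The bounded-difference estimate is then
\[
0\le\log\|A_{\iii|_n}\|-S_n\log f(\iii)=\log\frac{\|A_{\iii|_n}\|}{\|A_{\iii|_n}|V(\sigma^n\iii)\|}\le-\log C,
\]
using the trivial inequality $\|A|V\|\le\|A\|$ for the lower bound and Step~1 for the upper bound.

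The essential obstacle is the angular estimate in Step~1: quantifying closeness of $u_1^*(A_{\iii|_n})$ to the invariant direction $V(\iii)$. This is the heart of the Bochi--Gourmelon picture of dominated cocycles, and is exactly why the hypothesis of domination is needed. Once it is granted, the telescoping identity and the Hölder check in Step~2 are routine.
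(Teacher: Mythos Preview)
The paper's own proof is simply a citation of \cite[Lemma~2.2]{BochiMorris15} (for the norm comparison) and \cite[Lemma~2.4]{BaranyRams2017} (for the H\"older continuity of $V$), so there is no detailed argument to compare against. Your Step~2 is correct and is exactly how one passes from the pointwise norm bound to the Birkhoff-sum estimate.

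Step~1, however, has a genuine gap. The claimed equivalence ``after transposing, $|\cos\theta|\ge c$ is equivalent to $\angle(V(\iii),u_1^*(A_{\iii|_n}))\le\tfrac\pi2-c'$'' is false. With $\theta=\angle\bigl(V(\sigma^n\iii),v_1^*\bigr)$ the input angle and $\phi=\angle\bigl(V(\iii),u_1^*\bigr)$ the output angle, the SVD gives $\tan\phi=(\sigma_2/\sigma_1)\tan\theta$; a bound on $\phi$ therefore yields only $\tan\theta\le(\sigma_1/\sigma_2)\tan\phi$, and since domination supplies merely a \emph{lower} bound $\sigma_1/\sigma_2\ge c\,\tau^{-n}$ (not an upper one), this does not keep $\theta$ away from $\tfrac\pi2$. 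Even $\phi=O(\tau^n)$ would not suffice. Your supporting argument that $u_1^*$ lies within $O(\tau^n)$ of $V(\iii)$ is also circular: to conclude via the SVD that $A_{\iii|_n}v_0/\|A_{\iii|_n}v_0\|$ is close to $u_1^*$ for a test vector $v_0\in\CC'$ you already need $\|A_{\iii|_n}v_0\|\asymp\sigma_1$, which is the very inequality under proof. (The implication ``$|\det A_{\iii|_n}|/\|A_{\iii|_n}\|^2\le C\tau^n\Rightarrow\diam A_{\iii|_n}\CC\le C'\tau^n$'' happens to be true, but not for the reason you give: the projective derivative at $[v]$ is $|\det A|/\|Av\|^2$, so bounding it on $\CC$ again presupposes the norm lower bound there; the diameter contraction actually comes from iterated Hilbert-metric contraction on each component of $\CC$.)

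The correct route---essentially the content of \cite[Lemma~2.2]{BochiMorris15}---stays on the \emph{input} side: one shows that the most contracted singular direction $v_2^*(A_{\iii|_n})$ remains at uniform projective distance from $\CC$ (using that the complementary multicone $\overline{\RP\setminus\CC}$ is strictly invariant for the inverse tuple), whence $|\cos\theta|\ge c$ for every $v\in\CC'$ with no detour through $\phi$. Once Step~1 is replaced by that argument or by a direct appeal to \cite{BochiMorris15}, your small-$n$ compactness and all of Step~2 go through unchanged.
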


\begin{proof}
  This follows from Bochi and Morris \cite[Lemma~2.2]{BochiMorris15} and B\'ar\'any and Rams \cite[Lemma 2.4]{BaranyRams2017}.
\end{proof}

If $(A_1\ldots,A_N) \in GL_2(\R)^N$ is dominated, then we define $\Psi = (\Psi_1,\Psi_2) \colon \Sigma \to \R^2$ by setting
\begin{equation} \label{eq:Psi-def}
\begin{split}
  \Psi_1(\iii) &= -\log\|A_{\iii|_1}|V(\sigma\iii)\|, \\
  \Psi_2(\iii) &= -\log|\det(A_{\iii|_1})|+\log\|A_{\iii|_1}|V(\sigma\iii)\|,
\end{split}
\end{equation}
for all $\iii \in \Sigma$. Proposition \ref{thm:BGprop} implies that $\Psi$ is a H\"older continuous potential and
\begin{equation*}
  \int_\Sigma \Psi \dd\mu=\chi(\mu)
\end{equation*}
for all $\mu\in\MM_\sigma(\Sigma)$, where $\chi$ is defined in \eqref{eq:lyapunov-def}. We also define
\begin{equation*}
  \Psi^s(\iii) =
  \begin{cases}
    -s\Psi_1(\iii), &\text{if } 0 \le s < 1, \\
    -\Psi_1(\iii) - (s-1)\Psi_2(\iii), &\text{if } 1 \le s < 2, \\
    -s(\Psi_1(\iii)+\Psi_2(\iii))/2, &\text{if } 2 \le s < \infty,
  \end{cases}
\end{equation*}
for all $\iii \in \Sigma$.

Given a potential $\Phi \colon \Sigma\to\R^M$, we wish to find, for any $\alpha\in\PP(\Phi)^o$, a fully supported $n$-step Bernoulli measure $\nu$ with Birkhoff average $\int_\Sigma \Phi \dd\nu=\alpha$ and with large Lyapunov dimension. We will use Proposition \ref{thm:BGprop} to find such measures with Birkhoff average close to $\alpha$, but to find the exact match we will need the following technical lemma which is along the lines of Hopf's Lemma.

\begin{lemma} \label{lem:hopf}
  Let $f \colon \R^M\to (0,\infty)$ be a $C^1$ convex function and $g=\nabla f$. For $a>0$ let $Q=\{x\in\R^M : f(x)\leq a\}$. If $Q$ is bounded and nonempty such that $|g(x)|>b>0$ for all $x\in\partial Q$, then $B(0,b)\subset g(Q)$.
\end{lemma}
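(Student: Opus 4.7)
The plan is to prove the inclusion by a standard variational argument: for each $y \in B(0,b)$ I would produce $x^* \in Q$ with $g(x^*) = y$ by minimising the auxiliary function
$$
  h_y(x) := f(x) - \langle y, x \rangle
$$
over $Q$. This function inherits convexity and $C^1$ regularity from $f$, and $\nabla h_y = g - y$, so locating an interior critical point of $h_y$ in $Q$ is equivalent to solving $g(x^*) = y$. The set $Q = \{f \le a\}$ is closed (by continuity of $f$), bounded by hypothesis, nonempty, and convex (as a sublevel set of a convex function), hence compact, so $h_y$ attains its minimum at some $x^* \in Q$. If this minimiser lies in $Q^o$, the usual first-order condition immediately gives $g(x^*) = y$ and we are done.

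The content of the lemma is therefore in ruling out the boundary case $x^* \in \partial Q$. In that case continuity gives $f(x^*) = a$, and the Taylor expansion
$$
  f\bigl(x^* - t g(x^*)\bigr) = a - t |g(x^*)|^2 + o(t)
$$
together with the hypothesis $|g(x^*)| > b > 0$ shows that $v := -g(x^*)$ is an admissible inward direction at $x^*$, meaning $x^* + tv \in Q$ for all sufficiently small $t > 0$. The first-order optimality condition $\langle \nabla h_y(x^*), v \rangle \ge 0$ for a constrained minimiser, applied to this specific $v$, rearranges into
$$
  \langle y, g(x^*) \rangle \ge |g(x^*)|^2,
$$
and combining with the Cauchy--Schwarz inequality forces $|g(x^*)| \le |y| < b$, contradicting $|g(x^*)| > b$.

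The only step that genuinely uses the hypotheses is the verification that $-g(x^*)$ points into $Q$ at a boundary minimiser, which is where the uniform lower bound $|g| > b > 0$ on $\partial Q$ enters; the remainder is a soft convex-analytic computation and I do not anticipate any serious obstacle.
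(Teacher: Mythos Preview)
Your proof is correct and takes a genuinely different, more elementary route than the paper. The paper argues via topological degree: it observes that $\partial Q$ is a (perturbably strictly convex) smooth hypersurface, builds an orientation-preserving identification $n\colon S^{M-1}\to\partial Q$ via outer normals, checks that $g\circ n/|g\circ n|$ is homotopic to the identity on $S^{M-1}$, concludes $\deg(g,Q,0)=1$, and then uses homotopy invariance of degree (the hypothesis $|g|>b$ on $\partial Q$ keeps $g-r$ nonvanishing on $\partial Q$ for $|r|<b$) to get $\deg(g,Q,r)=1$ for all such $r$. Your variational argument bypasses degree theory entirely: minimising $h_y=f-\langle y,\cdot\rangle$ over the compact set $Q$ and ruling out boundary minimisers via the inward direction $-g(x^*)$ is clean and self-contained. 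In fact your argument never uses the convexity of $f$ (despite your mentioning it), so it proves a slightly stronger statement; it also immediately gives the closed ball $\overline{B(0,b)}\subset g(Q)$, since the contradiction $|y|\ge|g(x^*)|>b$ fires for $|y|=b$ as well. The paper's approach has the conceptual advantage of making the topological mechanism visible (and would adapt to situations where no natural variational structure is present), but for this lemma your argument is shorter and uses lighter tools.
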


\begin{proof}
$Q$ is convex and $\partial Q$ is a smooth manifold. For any $x\in\partial Q$, $g(x)/|g(x)|$ is the outer normal to $\partial Q$ at $x$. As we can perturb $Q$ slightly to obtain a strictly convex set, we can find a (positively oriented) bijection $n \colon S^{M-1}\to \partial Q$ such that $n^{-1}(x)$ is an outer vector at every $x\in\partial Q$. The map $\ell=g\circ n/|g\circ n|$ maps $S^{M-1}$ into itself and satisfies $\langle y, \ell(y) \rangle\geq 0$, hence $\ell$ is homotopic to the identity (the homotopy is given by $h(y,\gamma)=(\gamma y + (1-\gamma)\ell(y))/|\gamma y + (1-\gamma)\ell(y)|$). This implies that the topological degree is $\deg(g,Q,0) = \deg(\ell,S^{M-1},0)=1$.

As $g(x)\geq b$ for all $x\in\partial Q$, for every $r\in \R^M, |r|<b$, $g$ is homotopic to $g-r$ rel 0 on $\partial Q$. Thus, $\deg(g,Q,r)=\deg(g-r,Q,0)=\deg(g,Q,0)=1$. Since this implies $B(0,b)\subset g(Q)$, we are done.
\end{proof}

\begin{proposition}\label{prop:approx2}
  Let $\mathsf{A} = (A_1,\ldots,A_N) \in GL_2(\R)^N$ be dominated and $\Phi\colon\Sigma\to\R^M$ be a  continuous potential. If $\alpha\in\PP(\Phi)^o$, then for every
  \begin{equation*}
    s < \sup\{ s \ge 0 : \inf_{q \in \R^M}P(\log\varphi^s +\langle q, \Phi - \alpha \rangle) \geq 0 \}
  \end{equation*}
  there exists a fully supported $n$-step Bernoulli measure $\nu$ such that $\diml(\nu)=\diml(\tilde{\nu})\ge s$ and $\int_\Sigma \Phi \dd\tilde{\nu}=\alpha$, where $\tilde{\nu}$ is defined in \eqref{nbern}.
\end{proposition}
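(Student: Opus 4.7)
The plan is to construct a family of $n$-step Bernoulli measures parametrized by a tilt vector $q \in \R^M$ and use Lemma~\ref{lem:hopf} to extract one for which the Birkhoff average of $\Phi$ equals $\alpha$ exactly. By Proposition~\ref{thm:BGprop}, the subadditive potential $\log\varphi^s$ is uniformly comparable to the Birkhoff sums of the H\"older continuous additive potential $\Psi^s$ from \eqref{eq:Psi-def}, so $P(\log\varphi^s+\langle q,\Phi-\alpha\rangle)=P(\Psi^s+\langle q,\Phi-\alpha\rangle)$ for every $q\in\R^M$. Since $\alpha\in\PP(\Phi)^o$, there is $\eps>0$ with $\alpha+\eps v\in\PP(\Phi)$ for all $v\in S^{M-1}$; this forces the convex function $q\mapsto P(\Psi^s+\langle q,\Phi-\alpha\rangle)$ to grow at least linearly in $|q|$ in every direction and hence to be coercive. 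Fix $s_1\in(s,s^*)$ where $s^*$ is the supremum in the hypothesis; then $P_1:=\inf_qP(\Psi^{s_1}+\langle q,\Phi-\alpha\rangle)\ge 0$ and this infimum is attained at some $q^*\in\R^M$.

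For each $n\in\N$ pick, for every $\iii\in\Sigma_n$, a representative $\iii^\dagger\in[\iii]$ (for instance the periodic point $\iii\iii\iii\cdots$) and define the convex log-partition function
\[
  f_n(q) = \tfrac{1}{n}\log\sum_{\iii\in\Sigma_n}e^{S_n(\Psi^{s_1}+\langle q,\Phi-\alpha\rangle)(\iii^\dagger)}.
\]
Its gradient equals $\nabla f_n(q)=\sum_\iii p_\iii(q)\tfrac1n(S_n\Phi(\iii^\dagger)-n\alpha)$, where the strictly positive probability vector $p_\iii(q)\propto e^{S_n(\Psi^{s_1}+\langle q,\Phi-\alpha\rangle)(\iii^\dagger)}$ defines a fully supported $n$-step Bernoulli measure $\nu_{q,n}$ on $\Sigma$, and $f_n(q)\to P(\Psi^{s_1}+\langle q,\Phi-\alpha\rangle)$ uniformly on compacts as $n\to\infty$. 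Choose $R$ so large that the limit pressure exceeds $P_1+\delta$ on $\partial B(q^*,R)$ and has gradient of norm at least $b$ there, for some $\delta,b>0$; both bounds transfer to $f_n$ (with slightly worse constants) for every sufficiently large $n$. After shifting $f_n$ by a constant to make it positive, Lemma~\ref{lem:hopf} produces $q_n\in B(q^*,R)$ with $\nabla f_n(q_n)=0$, equivalently $\sum_\iii p_\iii(q_n)\tfrac1n S_n\Phi(\iii^\dagger)=\alpha$.

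To upgrade this to the exact integrated equality $\int\Phi\dd\tilde\nu_{q_n,n}=\alpha$, Lemma~\ref{lem:variance} yields $\sup_{q\in B(q^*,R)}\bigl|(\int\Phi\dd\tilde\nu_{q,n}-\alpha)-\nabla f_n(q)\bigr|=o(1)$ as $n\to\infty$. Since $\nabla f_n$ covers the ball $B(0,b/2)$ with nonzero Brouwer degree by Lemma~\ref{lem:hopf}, a homotopy argument shows that the continuous map $q\mapsto\int\Phi\dd\tilde\nu_{q,n}-\alpha$ also has $0$ in its image on $B(q^*,R)$ for large $n$; pick $\tilde q_n$ achieving this and set $\nu:=\nu_{\tilde q_n,n}$. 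The near-minimality $f_n(q_n)\to P_1\ge 0$ combined with the variational identity for Bernoulli partition functions (or a direct computation of $h(\tilde\nu)$ and $\int\Psi^{s_1}\dd\tilde\nu$ from the explicit weights) gives $h(\tilde\nu)+\int\Psi^{s_1}\dd\tilde\nu\ge -o(1)$, which translates to $\diml(\tilde\nu)\ge s_1-o(1)\ge s$ for $n$ sufficiently large. All weights $p_\iii(\tilde q_n)$ are strictly positive, so $\nu$ is fully supported. The main technical obstacle is precisely this final upgrade from the approximate Birkhoff-sum equality produced by Lemma~\ref{lem:hopf} (which uses the single representative $\iii^\dagger$) to the exact integral equality required in the statement; it is this step that forces the combined use of Lemma~\ref{lem:variance} and a topological degree argument.
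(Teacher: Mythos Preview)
Your approach is essentially the same as the paper's: replace $\log\varphi^s$ by the additive $\Psi^s$ via Proposition~\ref{thm:BGprop}, build a one-parameter family of $n$-step Bernoulli measures by tilting, apply Lemma~\ref{lem:hopf} to the finite log-partition function, and then use a degree/homotopy argument together with Lemma~\ref{lem:variance} to pass from the representative-based equality to the exact integral condition $\int\Phi\,\mathrm{d}\tilde\nu=\alpha$. The paper organises this by first replacing $S_n\Phi$ and $S_n\Psi^s$ by locally constant approximants $\Phi_n,\Psi_n$ (your choice of a single representative $\iii^\dagger$ per cylinder is exactly one such choice), works directly at the fixed $s$ using the strict positivity $\delta:=\inf_q P(\Psi^s+\langle q,\Phi-\alpha\rangle)>0$ rather than introducing an intermediate $s_1$, and applies Lemma~\ref{lem:hopf} on the \emph{sublevel set} $Q=\{q:P_n(\Psi_n+\langle q,\Phi_n-n\alpha\rangle)\le 2n\delta\}$ rather than on a ball; the latter point is where your write-up needs a small correction, since Lemma~\ref{lem:hopf} as stated requires $Q$ to be a sublevel set of the convex function (so that $\nabla f_n$ is outward-pointing on $\partial Q$), and the limiting pressure need not be differentiable when $\Phi$ is merely continuous.
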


\begin{proof}
Let $s < \sup\{ s \ge 0 : \inf_{q \in \R^M}P(\log\varphi^s +\langle q, \Phi - \alpha \rangle) \geq 0 \}$ and note that, by Proposition \ref{thm:BGprop}, we have
$$
  P(\log\varphi^s +\left\langle q, \Phi - \alpha \right\rangle)=P(\Psi^s+\left\langle q, \Phi - \alpha \right\rangle).$$
Since for any $q\in\R^m$ $s\to P(\Psi^s+\left\langle q, \Phi - \alpha \right\rangle)$ is strictly decreasing we have that
$$\inf\{P(\Psi^s+\langle q,\Phi-\alpha\rangle):q\in\R^{M}\}>0.$$
Since $\alpha\in\PP(\Phi)^o$ we can find $\eta>0$ such that for any $q\in\R^M$ with $|q|=1$ we can find an invariant measure $\mu$ such that $\int\Phi\dd\mu-\alpha=\eta q$. Therefore by the variational principle for any $q\in\R^m$
$$P(\Psi^s+\left\langle q, \Phi - \alpha \right\rangle)\geq h(\mu)+\int\log\Phi^s\dd\mu+\eta|q|$$
where $h(\mu)+\int\log\Phi^s\dd\mu$ is bounded uniformly below for all invariant measures.
Thus for
$$
\delta=\inf\{P(\Psi^s+\langle q,\Phi-\alpha\rangle):q\in\R^{M}\}>0
$$
we can choose $q_0>0$ so that
$$
P(\Psi^s+\langle q,\Phi-\alpha\rangle)\geq 3\delta
$$
whenever $|q| \geq q_0$. We fix $\eps_1,\eps_2>0$ such that
$$
\eps_1 q_0 + \eps_2<\delta/4.
$$
Since $\Phi$ and $\Psi^s$ are continuous, we can, by Lemma \ref{lem:variance}, choose $n\in\N$ such that
$$
\max_{\iii\in\Sigma_n}\max_{\jjj,\kkk\in [\iii]}\{|S_n\Phi(\jjj)-S_n\Phi(\kkk)|\}\leq n\eps_1
$$
and
$$
\max_{\iii\in\Sigma_n}\max_{\jjj,\kkk\in [\iii]}\{|S_n\Psi^s(\jjj)-S_n\Psi^s(\kkk)|\}\leq n\eps_2.
$$
Therefore, we can find functions $\Phi_n$ and $\Psi_n$ which are constant on $n$th level cylinders and where
$$
\norm{S_n\Phi-\Phi_n}_{\infty}\leq n\eps_1 \quad \text{and} \quad \norm{S_n\Psi^s-\Psi_n}_{\infty}\leq n\eps_2.
$$
We now work with the pressure for $\sigma^n$ which we denote by $P_n$. Note that we have $P_n(S_n \cdot)=nP(\cdot)$. Thus we have
$$
\inf\{P_n(S_n\Psi^s+\langle q,S_n\Phi-n\alpha\rangle):|q|\leq q_0\}=n\delta
$$
and
$$
\inf\{P_n(S_n\Psi^s+\langle q,S_n\Phi-n\alpha\rangle):|q|=q_0\}\geq 3n\delta.
$$
Since $\eps_1|q_0|+\eps_2<\delta/4$, we see that
\begin{equation}\label{eq:min}
  \min\{ P_n(\Psi_n+\langle q,\Phi_n-n\alpha\rangle):|q|\leq q_0\} \in \biggl[\frac{3n\delta}{4}, \frac{5n\delta}{4}\biggr]
\end{equation}
and
$$
\min\{ P_n(\Psi_n+\langle q,\Phi_n-n\alpha\rangle):|q|=q_0\}\geq \frac{11 n\delta}{4}.
$$
Since $\Phi_n$ and $\Psi_n$ are locally constant, and therefore H\"{o}lder continuous, the function $q \mapsto P_n(\Psi_n+\langle q,\Phi_n-n\alpha\rangle)$ is analytic and convex. Moreover, for any $q_*\in \R^M$ we have
$$
\nabla\big|_{q=q_*} P_n(\Psi_n+\langle q,\Phi_n-n\alpha\rangle)=\int_\Sigma (\Phi_n-n\alpha)\dd\mu_{q_*},
$$
where $\mu_{q_*}$ is the equilibrium state for $\Psi_n+\langle q_*,\Phi_n-n\alpha\rangle$. Note that the set
$$
Q=\{q:P_n(\Psi_n+\langle q,\Phi_n-n\alpha\rangle)\leq 2n\delta\} \subset B(0,q_0)
$$
is convex. By convexity and \eqref{eq:min}, we get
\begin{align*}
  |\nabla P_n&(\Psi_n+\langle q,\Phi_n-n\alpha\rangle)| \\ &\geq \frac{P_n(\Psi_n+\langle q,\Phi_n-n\alpha\rangle)-P_n(\Psi_n+\langle\tilde{q},\Phi_n-n\alpha\rangle)}{|q-\tilde{q}|} \geq\frac{3n\delta}{8q_0}
\end{align*}
for all $q\in\partial Q$.

Define $f_1,f_2\colon Q\to\R^M$ by setting
$$
f_1(q)=\int_\Sigma (\Phi_n-n\alpha) \dd\mu_q=\nabla P_n(\Psi_n+\langle q,\Phi_n-n\alpha\rangle)
$$
and
$$
f_2(q)=\int_\Sigma (S_n\Phi-n\alpha) \dd\mu_q.
$$

By Lemma \ref{lem:hopf}, $f_1(Q)\supset B(0,\frac{3n\delta}{8q_0})$. Since
$$
\norm{f_1-f_2}_{\infty}\leq\eps_1n<\frac{3n\delta}{8q_0},
$$
we have
$$
|(1-t)f_1(q)+tf_2(q)|\geq \frac{3\delta n}{8q_0}-t|f_1(q)-f_2(q)|\geq\frac{3\delta n}{8q_0}-\eps_1n>0
$$
for all $t\in[0,1]$ and $q\in\partial Q$. It follows that $f_1|_{\partial Q}$ and $f_2|_{\partial Q}$ are homotopic on $\R^{M}\setminus\{0\}$ and hence $0\in f_2(Q)$. This means that there exists $q_1\in Q$ such that $\int_\Sigma S_n\Phi\dd\mu_{q_1}=n\alpha$. Now, by \eqref{eq:min},
\begin{align*}
  \frac{3n\delta}{4} &\leq P_n(\Psi_n+\langle q_1,\Phi_n-n\alpha\rangle)\\
  &=h(\mu_{q_1},\sigma^n)+\int_\Sigma \Psi_n+\langle q_1,\Phi_n-n\alpha\rangle\dd\mu_{q_1}.
\end{align*}
So
$$
  h(\mu_{q_1},\sigma^n)+\frac{n\delta}{4}+\int_\Sigma S_n\Psi^s\dd\mu_{q_1}\geq \frac{3n\delta}{4}
$$
and
$$
  h(\mu_{q_1},\sigma^n)+\int_\Sigma S_n\Psi^s\dd\mu_{q_1}\geq 0.
$$
If $0 \leq s < 1$, then we have
$$
\diml(\mu_{q_1})=\frac{h(\mu_{q_1},\sigma^n)}{\chi_1(\mu_{q_1},\sigma^n)}=\frac{h(\mu_{q_1},\sigma^n)}{\int_\Sigma S_n\Psi_1\dd\mu_{q_1}}\geq s.
$$
Alternatively, if $1 \leq s < 2$, then
$$h(\mu_{q_1},\sigma^n)+\chi_1(\mu_{q_1},\sigma^n)+(s-1)\chi_2(\mu_{q_1},\sigma^n)\geq 0$$
and rearranging gives that
$$
\diml(\mu_{q_1})=1+\frac{h(\mu_{q_1},\sigma^n)-\chi_1(\mu_{q_1},\sigma^n)}{\chi_2(\mu_{q_1},\sigma^n)}\geq s.
$$
The case $s \geq 2$ is left to the reader. Since $\mu_{q_1}$ is an equilibrium state for $\Psi_n+\langle q_1,\Phi_n-n\alpha\rangle$, which is constant on $n$th level cylinders, it is a $\sigma^n$-invariant Bernoulli measure. Taking $\nu=\mu_{q_1}$ finishes the proof.
\end{proof}

We remind the reader that the (lower) Hausdorff dimension of the measure $\mu$ on $\R^2$ is defined by
$$
\dimh(\mu)=\inf\{\dimh(A) : \mu(A)>0\}.
$$
In order to provide the lower bounds in the main theorems, we find invariant measures with prescribed integrals and Lyapunov exponents, for which we can calculate the Hausdorff dimension. The following theorem guarantees that  $n$-step Bernoulli measures can be used for this purpose.

\begin{theorem}\label{thm:dimension}
Let $(A_1+v_1,\ldots,A_N+v_N)$ be an affine IFS on $\R^2$ satisfying the SOSC. If $(A_1,\ldots,A_N) \in GL_2(\R)^N$ is strongly irreducible such that the generated subgroup of the normalized matrices is non-compact, then
$$
  \dimh(\pi_*\mu) = \diml(\mu)
$$
for all Bernoulli measures $\mu$ on $\Sigma$.
\end{theorem}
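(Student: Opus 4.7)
The plan is to reduce this to the main dimension result of B\'ar\'any, Hochman, and Rapaport~\cite{bhr2017selfaffine}. That paper establishes that for any self-affine measure on $\R^2$ generated by a tuple which is strongly irreducible, has non-compact generated subgroup after normalization, and satisfies exponential separation, the pushforward $\pi_*\mu$ is exact dimensional with dimension equal to the Lyapunov dimension. So the work to be done is verifying the hypotheses in our setting and combining the classical upper bound with their lower bound.

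First I would recall (or briefly prove, using a standard covering argument with Falconer's singular value function) the easy inequality $\dimh(\pi_*\mu)\leq\diml(\mu)$, which holds for any ergodic measure on a planar self-affine set and only uses that $\fii^s\circ A_{\iii|_n}$ correctly estimates the $\pi$-image of a cylinder via a covering by balls of size $\|A_{\iii|_n}\|$ and by ellipses of axes $\|A_{\iii|_n}\|,\|A_{\iii|_n}^{-1}\|^{-1}$. This gives the direction $\leq$ without separation hypotheses.

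For the reverse inequality I would invoke \cite{bhr2017selfaffine}. The hypotheses of strong irreducibility and non-compactness of the generated subgroup of normalized matrices are assumed by hypothesis. The remaining condition is exponential separation of the cylinders at the projection level, which is implied by the SOSC: under SOSC there exists $c>0$ such that $\dist((A_\iii+v_\iii)(X),(A_\jjj+v_\jjj)(X))\geq c\min\{\|A_\iii\|,\|A_\jjj\|\}$ for distinct $\iii,\jjj\in\Sigma_n$ (this is a quantitative separation between $n$th-level cylinders inherited from the separating open set $U$), which is stronger than exponential separation. Since a Bernoulli $\mu$ is ergodic and self-affine with respect to the IFS, their theorem applies and gives $\dimh(\pi_*\mu)=\diml(\mu)$.

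The one subtlety, and the step I expect to require the most care, is the case of an $n$-step Bernoulli measure $\mu$ (which is really what is needed in Proposition~\ref{prop:approx2}). Such a $\mu$ is Bernoulli for the shift $\sigma^n$ and hence for the iterated affine IFS $\{A_\iii+v_\iii\}_{\iii\in\Sigma_n}$ rather than the original one. Accordingly I would verify that the iterated system still satisfies the hypotheses: SOSC is preserved by iteration with the same $U$, strong irreducibility and non-compactness of the normalized subgroup are preserved because the semigroup generated by $\{A_\iii:\iii\in\Sigma_n\}$ has the same Zariski closure (and the same set of invariant finite unions of lines) as that generated by $\{A_1,\ldots,A_N\}$. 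Hence \cite{bhr2017selfaffine} applies to the iterated system and gives $\dimh(\pi_*\mu)=\diml(\mu,\sigma^n)/n=\diml(\tilde{\mu},\sigma)$, which together with $\pi_*\mu=\pi_*\tilde{\mu}$ (the supports coincide and both are $\sigma$-averages giving the same image measure on $X$) completes the equality as stated.
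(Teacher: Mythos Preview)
Your core approach---invoke \cite[Theorem~1.2]{bhr2017selfaffine}---is exactly what the paper does; the paper's entire proof is that one-line citation, so no separate upper-bound argument or verification of exponential separation is needed (the cited theorem is stated directly under SOSC).

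Two inaccuracies in your elaboration are worth flagging. First, your quantitative separation claim under SOSC is false as stated: SOSC does \emph{not} give $\dist((A_\iii+v_\iii)(X),(A_\jjj+v_\jjj)(X))\geq c\min\{\|A_\iii\|,\|A_\jjj\|\}$, since cylinder images of the attractor $X$ can touch (e.g.\ the Sierpi\'nski triangle); the disjointness in SOSC applies to images of the open set $U$, not of $X$. This detour is unnecessary anyway. Second, the $n$-step Bernoulli discussion is not part of the theorem as stated, and the assertion $\pi_*\mu=\pi_*\tilde\mu$ is wrong in general. The paper handles the $n$-step case at the point of application (Theorem~\ref{thm:domibirkhoff}) by applying the present theorem to the iterated IFS $\{A_\iii+v_\iii\}_{\iii\in\Sigma_n}$, for which $\mu$ is genuinely Bernoulli, and then using $\diml(\mu,\sigma^n)=\diml(\tilde\mu,\sigma)$ together with $\tilde\mu\geq\tfrac1n\mu$ to pass to $\tilde\mu$.
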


\begin{proof}
  This is by B\'ar\'any, Hochman, and Rapaport \cite[Theorem~1.2]{bhr2017selfaffine}.
\end{proof}

\begin{remark} \label{rem:hochman-rapaport}
  In a very recent preprint of Hochman and Rapaport \cite{hr2019selfaffine} the above result has been generalized to a case which allows severe overlapping. The strong open set condition has been replaced by the assumption that the defining affine maps do not share a fixed point and are exponentially separated; see \cite[Theorem 1.1]{hr2019selfaffine}. Relying on this, instead of Theorem \ref{thm:dimension}, would improve our main results accordingly.  
\end{remark}

We are now able to prove Theorem \ref{thm:mainbirkhoffint} for dominated systems.

\begin{theorem}\label{thm:domibirkhoff}
Let $(A_1+v_1,\ldots,A_N+v_N)$ be an affine IFS on $\R^2$ satisfying the SOSC and $\Phi\colon\Sigma\to\R^M$ be a continuous potential. If $(A_1,\ldots,A_N) \in GL_2(\R)^N$ is strongly irreducible and dominated such that the generated subgroup of the normalized matrices is non-compact, then
\begin{align*}
  \dimh(\pi E_{\Phi}(\alpha)) &= \sup\{\diml(\mu) : \mu\in\MM_\sigma(\Sigma)\text{ and }\int_\Sigma\Phi \dd\mu=\alpha\} \\
  &=\sup\{ s \ge 0 : \inf_{q \in \R^M}P(\log \varphi^s +\left\langle q, \Phi - \alpha \right\rangle) \geq 0 \}
\end{align*}
for all $\alpha\in\PP(\Phi)^o$.
\end{theorem}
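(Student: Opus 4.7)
I denote the quantities in the statement by $(A)=\dimh(\pi E_{\Phi}(\alpha))$, $(B)$ the supremum of $\diml(\mu)$ over invariant $\mu$ with $\int_\Sigma\Phi\dd\mu=\alpha$, and $(C)$ the pressure based supremum. The plan is to establish $(A)\le(C)$, $(C)\le(A)$, $(C)\le(B)$ and $(B)\le(C)$. The upper bound $(A)\le(C)$ is immediate from Proposition~\ref{thm:birkhoffupper}, which does not require domination.

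For $(C)\le(A)$ and $(C)\le(B)$, fix $s<(C)$ and apply Proposition~\ref{prop:approx2} to obtain a fully supported $n$-step Bernoulli measure $\nu$ satisfying $\int_\Sigma\Phi\dd\tilde\nu=\alpha$ and $\diml(\tilde\nu)\ge s$. The essential move is to reinterpret $\nu$ as a genuine Bernoulli measure on the shift over the alphabet $\Sigma_n$, associated with the $n$-fold IFS consisting of the compositions $(A_{i_1}+v_{i_1})\circ\cdots\circ(A_{i_n}+v_{i_n})$ for $i_1\cdots i_n\in\Sigma_n$. This derived system has the same attractor $X$ and inherits the SOSC, strong irreducibility, and non-compactness of the normalised subgroup from the original --- the last because the $n$-fold subgroup has finite index --- so Theorem~\ref{thm:dimension} applies and yields
\[
\dimh(\pi_*\nu)=\diml(\nu,\sigma^n)=\diml(\tilde\nu,\sigma)\ge s,
\]
the second equality coming from $h(\nu,\sigma^n)=nh(\tilde\nu,\sigma)$ and $\chi_i(\nu,\sigma^n)=n\chi_i(\tilde\nu,\sigma)$. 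Birkhoff's ergodic theorem applied to the $\sigma$-ergodic measure $\tilde\nu$ gives $\tilde\nu(E_{\Phi}(\alpha))=1$, and since $E_{\Phi}(\alpha)$ is $\sigma$-invariant while $\tilde\nu$ averages the $\sigma$-translates of $\nu$, also $\nu(E_{\Phi}(\alpha))=1$. Hence $\pi_*\nu$ is supported on $\pi E_{\Phi}(\alpha)$, so $\dimh(\pi E_{\Phi}(\alpha))\ge\dimh(\pi_*\nu)\ge s$; letting $s\uparrow(C)$ gives $(A)\ge(C)$, and the same $\tilde\nu$ witnesses $(B)\ge(C)$.

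For $(B)\le(C)$, domination together with Proposition~\ref{thm:BGprop} implies that $\log\varphi^s(A_{\iii|_n})$ and the Birkhoff sum $S_n\Psi^s(\iii)$ of the H\"older potential $\Psi^s$ defined right after \eqref{eq:Psi-def} differ by a uniformly bounded function, so the two pressures coincide. The classical variational principle then yields, for any $\mu\in\MM_\sigma(\Sigma)$ with $\int_\Sigma\Phi\dd\mu=\alpha$ and any $q\in\R^M$,
\[
P(\log\varphi^s+\langle q,\Phi-\alpha\rangle)=P(\Psi^s+\langle q,\Phi-\alpha\rangle)\ge h(\mu)+\int_\Sigma\Psi^s\dd\mu,
\]
and a brief case analysis based on the piecewise definitions of $\Psi^s$ and $\diml$ shows that the right-hand side is nonnegative precisely when $s\le\diml(\mu)$. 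Taking $s=\diml(\mu)$ makes the pressure nonnegative for every $q$, whence $\diml(\mu)\le(C)$. The principal obstacle in the whole argument is the bookkeeping around the second step: confirming that each hypothesis of Theorem~\ref{thm:dimension} truly transfers to the $n$-fold IFS and that the identity $\diml(\nu,\sigma^n)=\diml(\tilde\nu,\sigma)$ follows cleanly from the entropy and Lyapunov scaling relations.
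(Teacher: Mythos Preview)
Your argument is correct and follows the paper's proof closely: both combine Proposition~\ref{thm:birkhoffupper} for $(A)\le(C)$, Proposition~\ref{prop:approx2} together with Theorem~\ref{thm:dimension} for the lower bound, and a variational principle for $(B)\le(C)$. The paper applies Theorem~\ref{thm:dimension} directly to $\tilde\nu$ and, for $(B)\le(C)$, invokes the subadditive variational principle of \cite{CaoFengHuang,Kaenmaki2004} rather than passing through $\Psi^s$; your detour through the $n$-fold IFS and the additive principle is equally valid (indeed, it is arguably more careful about the Bernoulli hypothesis in Theorem~\ref{thm:dimension}, since $\tilde\nu$ is not literally Bernoulli for $\sigma$).

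One small correction: the claim that non-compactness transfers to the $n$-fold system ``because the $n$-fold subgroup has finite index'' is not clearly justified and is in any case unnecessary. A direct argument suffices: since the original normalised group is non-compact it contains a hyperbolic element $\tilde A_\jjj$; then $(\tilde A_\jjj)^n=\tilde A_{\jjj^n}$ has word length $n|\jjj|$, hence lies in the semigroup generated by the normalised $n$-fold matrices, and is still hyperbolic. This already forces the $n$-fold normalised group to be non-compact.
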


\begin{proof}
It follows from Propositions~\ref{thm:birkhoffupper} and \ref{prop:approx2} that
\begin{align*}
  \dimh(\pi E_\Phi(\alpha)) &\leq \sup\{ s \ge 0 : \inf_{q \in \R^M}P(\log \varphi^s +\left\langle q, \Phi - \alpha \right\rangle) \geq 0 \} \\
  &\leq \sup\{\diml(\tilde\nu) :\nu \text{ is fully supported $n$-step} \\
  &\qquad\qquad\qquad\quad\;\;\,\text{Bernoulli and } \int_\Sigma\Phi \dd\tilde\nu=\alpha\} \\
  &\leq \sup\{\diml(\mu) : \mu\in\MM_\sigma(\Sigma)\text{ and }\int_\Sigma\Phi \dd\mu=\alpha\},
\end{align*}
where $\tilde\nu$ is defined in \eqref{nbern}. Let $\mu \in \MM_\sigma(\Sigma)$ be such that $\int_\Sigma \Phi \dd\mu = \alpha$. By the variational principle (see \cite{CaoFengHuang, Kaenmaki2004}), if $s < \diml(\mu)$, then
\begin{equation*}
  P(\log \varphi^s +\left\langle q, \Phi - \alpha \right\rangle) \geq h(\mu) + \lim_{n \to \infty} \tfrac1n \int_\Sigma \log\fii^s(A_{\iii|_n}) \dd\mu(\iii) > 0
\end{equation*}
for all $q \in \R^M$. Therefore, $s \leq \sup\{ s \ge 0 : \inf_{q \in \R^M}P(\log \varphi^s +\left\langle q, \Phi - \alpha \right\rangle) \geq 0 \}$ and, consequently,
\begin{align*}
  \sup\{\diml(\mu) :\;&\mu\in\MM_\sigma(\Sigma)\text{ and }\int_\Sigma\Phi \dd\mu=\alpha\} \\
  &\leq \sup\{ s \ge 0 : \inf_{q \in \R^M}P(\log \varphi^s +\left\langle q, \Phi - \alpha \right\rangle) \geq 0 \}.
\end{align*}
Finally, let $\nu$ be a fully supported $n$-step Bernoulli measure so that $\int_\Sigma\Phi \dd\tilde\nu=\alpha$. Then clearly $\tilde\nu(E_\Phi(\alpha))=1$ and, by Theorem \ref{thm:dimension},
$$
  \diml(\tilde{\nu})=\dimh(\pi_*\tilde{\nu}) \leq \dimh(\pi E_\Phi(\alpha))
$$
finishing the proof.
\end{proof}

\section{Dominated subsystems}\label{sec:subsys}

We begin the section with the proof of a key lemma in order to construct dominated subsystems.

\begin{lemma}\label{lem:domin}
Let $A_1,A_2,A_3\in GL_2(\R)$ such that there exist cones $\BB_1, \BB_2,\BB_3$ and $\CC_1, \CC_2, \CC_3$ such that
\begin{enumerate}
  \item $\BB_1\cap\BB_2=\emptyset$, $\CC_1\cap\CC_2=\emptyset$, $\CC_1\cap\BB_1=\emptyset$, and $\CC_2\cap\BB_2=\emptyset$,
  \item there exist $i_1,j_1\in\{1,2\}$ such that $\CC_3\subset\BB_{i_1}^o$ and $\BB_3\subset\CC_{j_1}^o$,
  \item $A_i(\overline{\RP\setminus\BB_i})\subset\CC_i^o$ for every $i\in\{1,2,3\}$.
\end{enumerate}
We then have that for every $A\in GL_2(\R)$ which is hyperbolic with $u(A)\in \CC_i^o$ and $s(A)\in \BB_j^o$ there exist $i,j\in\{1,2\}$ such that $A_iA_3AA_j^2(\CC_1\cup\CC_2)\subset (\CC_1\cup\CC_2)^o$. For every other $A\in GL_2(\R)$ there exist $i,j\in\{1,2\}$ such that $A_i^2AA_j^2(\CC_1\cup\CC_2)\subset (\CC_1\cup\CC_2)^o$.
\end{lemma}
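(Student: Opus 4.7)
Write $D := \CC_1 \cup \CC_2$. The plan is to process the target composition from right to left, using (3) to force the image into some $\CC_k^o$ at each step. First, I would choose $j \in \{1,2\}$ so that $A_j^2 D \subset \CC_j^o$. By (1), $D \cap \BB_j = \CC_{3-j} \cap \BB_j$, so if this intersection is empty for some $j$, then (3) gives $A_j D \subset \CC_j^o$, and a second application (using $\CC_j \cap \BB_j = \emptyset$) gives $A_j^2 D \subset \CC_j^o$. If no such $j$ exists, I would analyze $A_j^2 \CC_{3-j}$ directly using that (3) forces $A_j$ to be projectively hyperbolic with attracting fixed point $a_j \in \CC_j^o$ and repelling fixed point $r_j \in \BB_j^o$; two iterations of $A_j$ still push $\CC_{3-j}$ into a controlled neighbourhood of $a_j$, with a sub-split based on the position of $r_j$ relative to $\CC_{3-j}$ handling any ``spillover''.

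For Case II (the ``other'' $A$), I apply $A$ and then $A_i^2$, aiming to find $i \in \{1,2\}$ with $A A_j^2 D \cap \BB_i = \emptyset$; then (3) yields $A_i^2 A A_j^2 D \subset \CC_i^o \subset D^o$. Since $A_j^2 D$ is already compressed inside $\CC_j^o$ and $A$ is \emph{not} in the bad hyperbolic configuration, the image $A(A_j^2 D)$ cannot meet both $\BB_1$ and $\BB_2$. This is verified case by case: for non-hyperbolic $A$ the projective action is bounded (elliptic $=$ rotation, parabolic fixes a single point); for hyperbolic $A$ with $u(A)\notin\CC_1^o\cup\CC_2^o$, the image of $\CC_j^o$ stays in a single arc on the $s(A)$-side of $u(A)$, missing the $\BB_i$ on the opposite side; and for $s(A)\notin\BB_1^o\cup\BB_2^o$, the image clusters near $s(A)$ and avoids both $\BB$'s. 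In each sub-case an appropriate $i$ exists.

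For Case I (the bad hyperbolic $A$ with $u(A)\in\CC_{i_\ast}^o$ and $s(A)\in\BB_{j_\ast}^o$) the arc $A A_j^2 D$ may hit both $\BB_1$ and $\BB_2$ when $A$ is only mildly hyperbolic, which is precisely why $A_3$ is required. By (2), $\BB_3\subset\CC_{j_1}^o$ and $\CC_3\subset\BB_{i_1}^o$. I would choose $j$ so that $\CC_j$ is separated from $A^{-1}(\BB_3)$; this is feasible because $A^{-1}$ has attracting direction $u(A)\in\CC_{i_\ast}^o$, so $A^{-1}(\BB_3)$ clusters inside $\CC_{i_\ast}$ and, after the correct choice of $j$, stays away from $\CC_{3-i_\ast}$. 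Then $A A_j^2 D\cap\BB_3=\emptyset$, so (3) yields $A_3 A A_j^2 D\subset\CC_3^o\subset\BB_{i_1}^o$. Setting $i=3-i_1$ and using $\BB_{i_1}\cap\BB_i=\emptyset$ from (1), a final application gives $A_i A_3 A A_j^2 D\subset\CC_i^o\subset D^o$, as required.

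The main obstacle is the case analysis at the places where a crude ``image-is-small'' argument fails. In Case II, when $A$ is elliptic with a large projective rotation, one must exploit the smallness of $A_j^2 D\subset\CC_j^o$ (produced by the double contraction $A_j^2$) to fit $A(A_j^2 D)$ into the gap between $\BB_1$ and $\BB_2$. In Case I, proving $A A_j^2 D\cap\BB_3=\emptyset$ requires a delicate balance between the positions of $\BB_3\subset\CC_{j_1}^o$, the attracting direction of $A^{-1}$ in $\CC_{i_\ast}^o$, and the chosen cone $\CC_j$, all arranged through the inclusions (1) and (2). The delicate sub-case where $\CC_{3-j}\cap\BB_j\ne\emptyset$ for both choices of $j$ is likely the most technical part and will probably require a further split according to whether the repelling fixed point $r_j$ of $A_j$ lies in $\CC_{3-j}$ or not.
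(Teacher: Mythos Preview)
Your Case II contains a genuine gap. You try to show directly that whenever $A$ is \emph{not} hyperbolic with $u(A)\in\CC_i^o$ and $s(A)\in\BB_j^o$, the image $A(A_j^2D)$ avoids some $\BB_i$, splitting on the type of $A$ and invoking phrases such as ``the projective action is bounded'' for elliptic $A$, or ``the image clusters near $s(A)$'' for certain hyperbolic $A$. None of the hypotheses supply the quantitative control these phrases require: an elliptic $A$ is conjugate to a rotation, but that rotation may well carry $\CC_j$ onto an arc overlapping both $\BB_1^o$ and $\BB_2^o$; your suggested remedy of exploiting the ``smallness of $A_j^2D$'' fails because $A_j^2$ only sends $D$ into $\CC_j^o$, and $\CC_j$ need not be small relative to the gap between the $\BB_i$. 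The same objection applies to your Case I, where ``$A^{-1}(\BB_3)$ clusters inside $\CC_{i_\ast}$'' presumes a contraction rate for $A^{-1}$ that is nowhere assumed.

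The paper sidesteps all of this by reversing the dichotomy: it splits not on the type of $A$ but on whether there exist $i,j\in\{1,2\}$ with $A\CC_j\cap\BB_i^o=\emptyset$. If yes, the chain $A_i^2AA_j^2$ works at once via (3). If no, then each projective interval $A\CC_j$ meets both $\BB_1^o$ and $\BB_2^o$ and hence contains one of the two complementary arcs of $\BB_1\cup\BB_2$; since $A\CC_1$ and $A\CC_2$ are disjoint they contain different arcs, forcing $A(\overline{\RP\setminus(\CC_1\cup\CC_2)})\subset\BB_1^o\cup\BB_2^o$. A short fixed-point argument (``$A^2$ cannot have two stable eigenspaces'') then shows $A\BB_i\subset\BB_i^o$ and $A^{-1}\CC_j\subset\CC_j^o$ for unique $i,j$, so $A$ is hyperbolic with $s(A)\in\BB_i^o$ and $u(A)\in\CC_j^o$. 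This is purely topological---no size estimates anywhere---and as a by-product yields $A\CC_{3-j}\cap(\CC_1\cup\CC_2)=\emptyset$; since $\BB_3\subset\CC_{j_1}$ this already gives $A\CC_{3-j}\cap\BB_3=\emptyset$, after which $A_{3-i_1}A_3AA_{3-j}^2$ finishes the job. The fix for your argument is therefore to prove the contrapositive: assume the cone condition fails for every pair $i,j$, and deduce from interval combinatorics on $\RP$ that $A$ must be of the ``bad'' hyperbolic type.
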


\begin{proof}
First, let us make a couple of remarks. It is easy to see that $A_i(\CC_1\cup\CC_2)\subset \CC_i^o$ for $i\in\{1,2\}$.  This means that $A_{3-i_1}\CC_3\subset\CC_{3-i_1}^o$. Finally, we note that if a cone $\CC$ satisfies $\CC\cap(\CC_1\cup\CC_2)=\emptyset$, then $A_3\CC\subset\CC_3^o$.

Fix $A\in GL_2(\R)$. We see that there are two possible cases:
\begin{enumerate}
\item\label{eq:case1} there exist $i,j\in\{1,2\}$ such that $A\CC_j\cap\BB_i^o=\emptyset$,
\item\label{eq:case2} for every $i,j\in\{1,2\}$ we have $A\CC_i\cap\BB_j^o\neq\emptyset$.
\end{enumerate}
In the case~\eqref{eq:case1}, since $A\CC_j\cap\BB_i^o=\emptyset$, we have $A_i(A\CC_j)\subset A_i(\overline{\RP\setminus\BB_i})\subset\CC_i^o$. Thus,
$$
A_i^2AA_j^2(\CC_1\cup\CC_2)\subset A_i^2A\CC_j^o\subset \CC_i^o\subset(\CC_1\cup\CC_2)^o.
$$
On the other hand, if the case~\eqref{eq:case2} holds, then $A(\overline{\RP\setminus(\CC_1\cup\CC_2)})\subset\BB_1^o\cup\BB_2^o$. Since $\BB_1$ and $\BB_2$ are disjoint intervals on $\RP$, one of the connected components of $(\overline{\RP\setminus(\CC_1\cup\CC_2)})$ is contained in $\BB_1^o$, and the other one is contained in $\BB_2^o$. In particular, there are $k,k'\in\{1,2\}$ so that $A\BB_1\subset\BB_k^o$ and $A\BB_2\subset\BB_{k'}^o$. Now, if $k\neq k'$ then $A^2\BB_1\subset\BB_1^o$ and $A^2\BB_2\subset\BB_2^o$, which is a contradiction, since it would imply that $A^2$ has two different stable eigenspaces. Similar argument can be applied for the cones $\CC_1$ and $\CC_2$, and the inverse matrix $A^{-1}$.

Thus, there exist unique $i,j\in\{1,2\}$ such that $A\BB_i\subset\BB_i^o$ and $A^{-1}\CC_j\subset\CC_j^o$. Thus, in particular, $A$ is a hyperbolic matrix with stable and unstable eigenspaces $s(A)\in\BB_i^o$ and $u(A)\in\CC_j^o$. Moreover, $A\CC_{3-j}\cap(\CC_1\cup\CC_2)=\emptyset$; see Figure~\ref{fig:illustration}. Thus,
$$
A_{3-i_1}A_3AA_{3-j}^2(\CC_1\cup\CC_2)\subset A_{3-i_1}A_3A\CC_{3-j}^o\subset A_{3-i_1}\CC_3^o\subset\CC_{3-i_1}^o\subset(\CC_1\cup\CC_2)^o
$$
and the proof is finished.
\end{proof}

\begin{center}
\begin{figure}[t]

\begin{tikzpicture}[scale=0.9]
  \draw (0,0) circle [radius=3];
  \draw[|-|] [thick,domain=-0.7:0.7,smooth,variable=\x] plot ({\x},{sqrt{(9.5-(\x)*(\x))}});
  \node[align=center] at (0.1,3.42) {$\mathcal{C}_i$};
  \draw[|-|] [thick,domain=1.5:2.5,smooth,variable=\x] plot ({\x},{sqrt{(9.5-(\x)*(\x))}});
  \node[align=center] at (2.45,2.6) {$\mathcal{C}_{3-i}$};
  \draw[|-|] [thick,domain=-1.9:-1.55,smooth,variable=\x] plot ({\x},{-sqrt{(8.5135-(\x)*(\x))}});
  \node[align=center] at (-1.1,-2.4) {$\mathcal{C}_{3}$};
  \draw[|-|] [thick,domain=0:1.5,smooth,variable=\x] plot ({\x},{-sqrt{(9.5-(\x)*(\x))}});
  \node[align=center] at (-2.2,-2.7) {$\mathcal{B}_j$};
  \draw[|-|] [thick,domain=-2.5:-1.5,smooth,variable=\x] plot ({\x},{-sqrt{(9.5-(\x)*(\x))}});
  \node[align=center] at (1,-3.4) {$\mathcal{B}_{3-j}$};
  \draw[|-|] [thick,domain=2:2.3,smooth,variable=\x] plot ({\x},{sqrt{(8.5135-(\x)*(\x))}});
  \node[align=center] at (2.3,1.4) {$\mathcal{B}_{3}$};

  \draw [decoration={markings,mark=at position 0.7 with
    {\arrow[scale=2.5,>=stealth]{>}}},postaction={decorate}] (1.95,1.8) .. controls (0,0) .. (-1.6,-2.2);
  \draw [decoration={markings,mark=at position 0.7 with
    {\arrow[scale=2.5,>=stealth]{>}}},postaction={decorate}] (-2.1,-1.9) .. controls (-0.4,0.4) .. (1.6,2.2);
  \draw [decoration={markings,mark=at position 0.7 with
    {\arrow[scale=2.5,>=stealth]{>}}},postaction={decorate}] (0.6,-2.75) .. controls (-0.5,0.4) .. (0,2.8);

  \draw (7,0) circle [radius=3];
  \draw[|-|] [thick,domain=6.3:7.7,smooth,variable=\x] plot ({\x},{sqrt{(9.5-(\x-7)*(\x-7))}});
  \node[align=center] at (7.1,3.42) {$\mathcal{C}_i$};
  \draw[|-|] [thick,domain=8.5:9.5,smooth,variable=\x] plot ({\x},{sqrt{(9.5-(\x-7)*(\x-7))}});
  \node[align=center] at (9.45,2.6) {$\mathcal{C}_{3-i}$};
  \draw[|-|] [thick,domain=7:8.5,smooth,variable=\x] plot ({\x},{-sqrt{(9.5-(\x-7)*(\x-7))}});
  \node[align=center] at (4.8,-2.7) {$\mathcal{B}_i$};
  \draw[|-|] [thick,domain=4.5:5.5,smooth,variable=\x] plot ({\x},{-sqrt{(9.5-(\x-7)*(\x-7))}});
  \node[align=center] at (8,-3.4) {$\mathcal{B}_{3-i}$};
  \draw[|-|] [thick,domain=5.4:7.2,smooth,variable=\x] plot ({\x},{-sqrt{(8.5135-(\x-7)*(\x-7))}});
  \node[align=center] at (6.55,-2.5) {$A\mathcal{C}_{3-j}$};
  \draw[|-|] [thick,domain=-65:220,smooth,variable=\x] plot ({7+2.91779026*cos(\x)}, {2.91779026*sin(\x)});
  \node[align=center] at (5,1.4) {$A\mathcal{C}_{j}$};

  \end{tikzpicture}
  \caption{The left-hand side image illustrates the assumptions in Lemma \ref{lem:domin} and the image on the right depicts one of the possible situations in the case (2) of the proof.}
  \label{fig:illustration}
\end{figure}
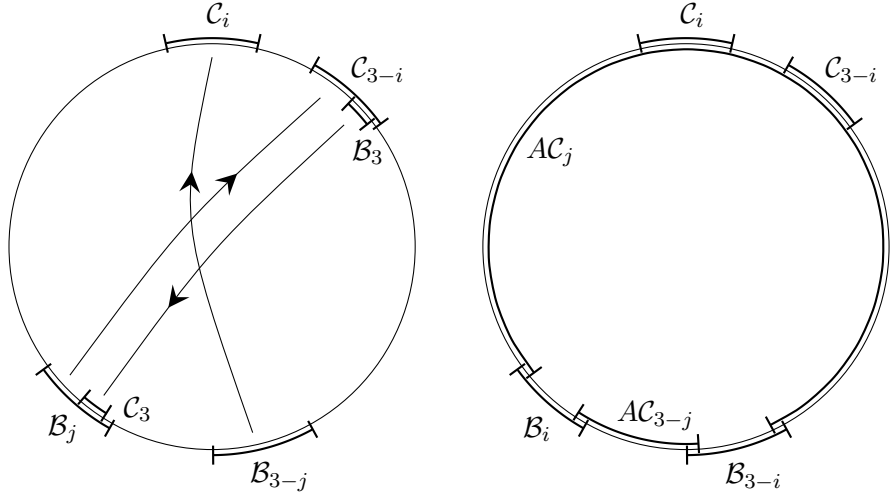
\end{center}

\begin{lemma}\label{lem:consdomi}
Let $(A_1,\ldots,A_N) \in GL_2(\R)^N$ be strongly irreducible such that the generated subgroup of the normalized matrices is non-compact. Then there exist $K\in\N$ and multicones $\BB$ and $\CC$ such that $\BB\subset\CC^o$, and for every $\iii\in\Sigma_*$ there exist $\jjj_1,\jjj_2\in\Sigma_K$ such that
$$
A_{\jjj_1}A_{\iii}A_{\jjj_2}\CC\subset\BB^o.
$$
\end{lemma}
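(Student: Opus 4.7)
The plan is to deduce the lemma from Lemma~\ref{lem:domin} applied to three matrices $A_k := A_{\mathbf{w}_k}^{N_0}$, where $\mathbf{w}_1, \mathbf{w}_2, \mathbf{w}_3 \in \Sigma_*$ are three words of a common length $L_0$ and $N_0$ is a large integer. With a careful choice of cones around the fixed directions $s(A_{\mathbf{w}_k})$ and $u(A_{\mathbf{w}_k})$ of these hyperbolic matrices, the desired multicones $\BB \subset \CC^o$ and uniform length $K = 2 N_0 L_0$ will emerge directly from the two cases appearing in the proof of Lemma~\ref{lem:domin}.

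The central technical input is the following density statement: under our two hypotheses, for any pair of distinct directions $(\xi, \eta) \in \RP \times \RP$ and any $\eps > 0$, there exists $\mathbf{w} \in \Sigma_*$ such that $A_{\mathbf{w}}$ is hyperbolic with $s(A_\mathbf{w}) \in B(\xi, \eps)$ and $u(A_\mathbf{w}) \in B(\eta, \eps)$. This is a standard rank-one approximation argument: non-compactness of the subgroup of normalized matrices forces the semigroup to contain at least one hyperbolic element $A_\mathbf{v}$; a long power $A_\mathbf{v}^n$ is close to a rank-one operator with image along $s(A_\mathbf{v})$ and kernel along $u(A_\mathbf{v})$; and the product $A_\mathbf{a} A_\mathbf{v}^n A_{\mathbf{w}'}$ is then close to a rank-one operator whose image line lies near $A_\mathbf{a} s(A_\mathbf{v})$ and whose kernel line lies near the preimage $A_{\mathbf{w}'}^{-1} u(A_\mathbf{v})$. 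By strong irreducibility applied both to $(A_i)$ and to the transposed tuple $(A_i^T)$ (equivalent conditions in dimension two, since perpendicularity provides a bijection between invariant lines), the orbits of $s(A_\mathbf{v})$ and $u(A_\mathbf{v})$ cover dense subsets of $\RP$, so as $\mathbf{a}$ and $\mathbf{w}'$ vary independently the above two directions can be adjusted independently, giving the claim.

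Choose six directions $\xi_1, \xi_2, \xi_3, \eta_1, \eta_2, \eta_3 \in \RP$ with $\xi_1, \xi_2, \eta_1, \eta_2$ pairwise distinct, $\xi_3$ in a small neighborhood of $\eta_1$, and $\eta_3$ in a small neighborhood of $\xi_1$. Select words $\mathbf{w}_k$ with $s(A_{\mathbf{w}_k}) \approx \xi_k$ and $u(A_{\mathbf{w}_k}) \approx \eta_k$, and replace each $\mathbf{w}_k$ by a power $\mathbf{w}_k^{m_k}$ so that all three have a common length $L_0$; the fixed-point directions are unchanged by iteration. Take pairwise disjoint closed cones $\CC_k \ni s(A_{\mathbf{w}_k})$ and $\BB_k \ni u(A_{\mathbf{w}_k})$ so small that $\CC_3 \subset \BB_1^o$ and $\BB_3 \subset \CC_1^o$. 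Choosing $N_0$ large, the hyperbolicity of $A_{\mathbf{w}_k}$ yields $A_{\mathbf{w}_k}^{N_0}(\overline{\RP \setminus \BB_k}) \subset \CC_k^o$ for each $k$, so the hypotheses of Lemma~\ref{lem:domin} are met with $A_k := A_{\mathbf{w}_k}^{N_0}$ and $i_1 = j_1 = 1$. For every $\iii \in \Sigma_*$, both cases of Lemma~\ref{lem:domin} produce $\jjj_1, \jjj_2 \in \Sigma_*$ of length exactly $2 N_0 L_0$ (case~1 gives $\mathbf{w}_i^{2N_0}$ and $\mathbf{w}_j^{2N_0}$, case~2 gives $\mathbf{w}_{3-i_1}^{N_0}\mathbf{w}_3^{N_0}$ and $\mathbf{w}_{3-j}^{2N_0}$), and inspection of that proof shows the image $A_{\jjj_1} A_\iii A_{\jjj_2}(\CC_1 \cup \CC_2)$ lies inside $(A_{\mathbf{w}_k}^{N_0} \CC_l)^o$ for some $k \in \{1,2\}$ and $l \in \{1,2,3\}$ depending on $\iii$. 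Therefore take
\[
\CC := \CC_1 \cup \CC_2, \qquad \BB := \bigcup_{k \in \{1,2\},\, l \in \{1,2,3\}} A_{\mathbf{w}_k}^{N_0} \CC_l, \qquad K := 2 N_0 L_0.
\]
Pairwise disjointness of the six cones gives $A_{\mathbf{w}_k}^{N_0} \CC_l \subset \CC_k^o$, so $\BB$ is a multicone with $\BB \subset \CC_1^o \cup \CC_2^o = \CC^o$, and the inclusion $A_{\jjj_1} A_\iii A_{\jjj_2} \CC \subset \BB^o$ is immediate.

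The main obstacle is the density statement in the second paragraph, which captures the geometric content of both hypotheses: non-compactness produces the initial hyperbolic element and its near rank-one limit, while strong irreducibility (together with its transpose-dual version) allows the image and kernel directions to be moved freely around $\RP$. The rest is bookkeeping, extracting the multicones $\BB$ and $\CC$ from the output of Lemma~\ref{lem:domin} and equalizing lengths by passing to powers $\mathbf{w}_k^{m_k}$ with a common length $L_0$.
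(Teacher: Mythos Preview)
The density statement in your second paragraph is where the argument breaks. Strong irreducibility rules out finite invariant unions of lines, hence rules out finite semigroup orbits in $\RP$, but it does not force any such orbit to be dense. Indeed, take any dominated tuple with forward-invariant multicone $\CC\subsetneq\RP$: every stable direction $s(A_\iii)$ lies in $\CC$ and every unstable direction $u(A_\iii)$ lies in its complement, so the pair $(s(A_\iii),u(A_\iii))$ is confined to $\CC\times(\RP\setminus\CC^o)$ and cannot approximate an arbitrary $(\xi,\eta)$. In particular the word $\mathbf{w}_3$ you need---with $s(A_{\mathbf{w}_3})$ near $u(A_{\mathbf{w}_1})$ and $u(A_{\mathbf{w}_3})$ near $s(A_{\mathbf{w}_1})$---does not exist in a dominated system, and your construction of the three matrices feeding into Lemma~\ref{lem:domin} collapses.

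The paper's proof confronts exactly this obstruction by a dichotomy at a fixed scale $r$. Either some word $\kkk_3$ with the required eigenspace configuration exists, in which case one proceeds essentially as you do, applying Lemma~\ref{lem:domin} with all three matrices. Or no such word exists at that scale: but then one observes that for every $\iii\in\Sigma_*$ the matrix $A_\iii$ automatically falls into the second branch of Lemma~\ref{lem:domin}---the one concluding $A_i^2 A A_j^2(\CC_1\cup\CC_2)\subset(\CC_1\cup\CC_2)^o$---so the third matrix $A_3$ is never needed and two hyperbolic words suffice. Your proposal tries to absorb both cases into the density claim, and that is precisely the step that fails.
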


\begin{proof}
Since the tuple is strongly irreducible and the generated subgroup of the normalized matrices is non-compact, there exist $\kkk_1,\kkk_2\in\Sigma_*$ such that $A_{\kkk_1}$ and $A_{\kkk_2}$ are hyperbolic and the spaces $s(A_{\kkk_1}), s(A_{\kkk_2}), u(A_{\kkk_1}), u(A_{\kkk_2})$ are all different. By taking powers, one can choose $\kkk_1$ and $\kkk_2$ so that $|\kkk_1|=|\kkk_2|$. Let $r>0$ be so small such that
\begin{equation}\label{eq:disjoint}
\begin{split}
B(s(A_{\kkk_1}),r)\cap B(s(A_{\kkk_2}),r)&=B(u(A_{\kkk_1}),r)\cap B(u(A_{\kkk_2}),r)\\
&=B(s(A_{\kkk_i}),r)\cap B(u(A_{\kkk_j}),r)=\emptyset,
\end{split}
\end{equation}
where $B(x,r)$ denotes the closed ball centered at $x$ and with radius $r$. Thus, there exists $L=L(r)\geq1$ such that
$$
(A_{\kkk_i})^L(\overline{\RP\setminus B(u(A_{\kkk_i}),r)})\subset B(s(A_{\kkk_i}),r)^o
$$
for $i\in\{1,2\}$. We distinguish two cases:
\begin{enumerate}
  \item\label{eq:case1b} there exists $r>0$ such that $u(A_{\iii})\notin B(s(A_{\kkk_i}),r)$ or $s(A_{\iii})\notin B(u(A_{\kkk_i}),r)$ for all $\iii\in\Sigma_*$ and $i\in\{1,2\}$,
  \item\label{eq:case2b} for every $r>0$ there exists $\iii\in\Sigma_*$ such that $u(A_{\iii})\in B(s(A_{\kkk_i}),r)$ and $s(A_{\iii})\in B(u(A_{\kkk_j}),r)$ for some $i,j\in\{1,2\}$.
\end{enumerate}
In the case~\eqref{eq:case1b}, by Lemma~\ref{lem:domin}, we see that for every $\iii\in\Sigma_*$ there exist $i,j\in\{1,2\}$ so that
\begin{align*}
    (A_{\kkk_i})^{2L}A_{\iii}(A_{\kkk_j})^{2L}(B(s(&A_{\kkk_1}),r)\cup B(s(A_{\kkk_2}),r)) \\ &\subset (B(s(A_{\kkk_1}),r)\cup B(s(A_{\kkk_2}),r))^o.
\end{align*}
If the case~\eqref{eq:case2b} holds, then fix $r>0$ so that \eqref{eq:disjoint} holds. Let $\kkk_3$ be such that $u(A_{\kkk_3})\in B(s(A_{\kkk_i}),r)$ and $s(A_{\kkk_3})\in B(u(A_{\kkk_j}),r)$. By choosing $\rho>0$ sufficently small, we have $B(u(A_{\kkk_3}),\rho)\subset B(s(A_{\kkk_i}),r)$ and $B(s(A_{\kkk_3}),\rho)\subset B(u(A_{\kkk_j}),r)$. Therefore, by choosing $M\in\N$ sufficiently large, we have
$$
(A_{\kkk_3})^M(\overline{\RP\setminus B(u(A_{\kkk_i}),\rho)})\subset B(s(A_{\kkk_i}),\rho)^o.
$$
By taking powers, we can assume that $M|\kkk_3|=|\kkk_1|=|\kkk_2|$.
Thus, the statement of the lemma again follows by applying Lemma~\ref{lem:domin}, with $K=2L\max|\kkk_1|$  and $\CC=B(s(A_{\kkk_1}),r)\cup B(s(A_{\kkk_2}),r)$.
\end{proof}

By Lemma~\ref{lem:consdomi}, there exists $K\geq1$ such that for every $n> 2K$ and $\iii\in\Sigma_{n-2K}$ there exist $\jjj_1=\jjj_1(\iii)$ and $\jjj_2=\jjj_2(\iii)$ such that the tuple
\begin{equation}\label{eq:dominsub}
  (A_\kkk)_{\kkk \in \Sigma_n^{\DD}}, \qquad \text{where }  \Sigma_n^{\DD} = \{\jjj_1(\iii)\iii\jjj_2(\iii) : \iii\in\Sigma_{n-2K}\},
\end{equation}
is dominated and strongly irreducible. Note that $\Sigma_n^\DD\subset\Sigma_{n}$ for all $n > 2K$.
If $n>2K$ and $\varphi \colon \Sigma\to\R$ is a subadditive potential, then we define a pressure on the dominated subsystem by setting
$$
P_{\DD,n}(\log\varphi)=\lim_{k\to\infty}\tfrac{1}{k}\log\sum_{\iii_1,\ldots,\iii_k\in\Sigma_n^\DD}\varphi(\iii_1\cdots\iii_k).
$$
Note that the condition given by Lemma \ref{lem:consdomi} is stronger than the usual domination, see Avila, Bochi, and Yoccoz \cite{AvilaBochiYoccoz2010} and Bochi and Gourmelon \cite{BochiGourmelon2009}. In particular, we have the following uniform bound.

\begin{lemma} \label{lem:dominbound}
Let multicones $\BB$ and $\CC$ be such that $\BB\subset\CC^o$. Then there exists $Z>0$ such that for every pair of matrices $A$ and $B$ satisfying $A\CC \cup B\CC\subset \BB^o$ we have
\[
\norm{AB} \geq e^{-Z} \norm{A}\norm{B}.
\]
\end{lemma}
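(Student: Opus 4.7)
My plan is to reduce the lemma to the following uniform geometric sublemma: there exists a constant $c_1=c_1(\BB,\CC)>0$ such that whenever $M\in GL_2(\R)$ satisfies $M\CC\subset\BB^o$ and $v\in\BB$ is a unit vector, then $\|Mv\|\ge c_1\|M\|$. Granting the sublemma, the lemma follows in two applications: pick any unit vector $v_0\in\BB$ (nonempty since $\BB\subset\CC^o$ is open and nonempty); apply the sublemma to $B$ to obtain $\|Bv_0\|\ge c_1\|B\|$, and observe that the inclusion $B\CC\subset\BB^o$ places the renormalized vector $w:=Bv_0/\|Bv_0\|$ inside $\BB$; apply the sublemma once more to $A$ with the unit vector $w$ to get $\|Aw\|\ge c_1\|A\|$. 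Chaining the two estimates gives
\[
\|AB\|\ge\|ABv_0\|=\|Bv_0\|\cdot\|Aw\|\ge c_1^{2}\,\|A\|\,\|B\|,
\]
so we may take $Z:=-2\log c_1$.

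To prove the sublemma I will use the singular value decomposition of $M$: letting $\theta_v$ denote the angle between $v$ and the top right singular vector of $M$, one has the standard lower bound $\|Mv\|^{2}\ge\|M\|^{2}\cos^{2}\theta_v$, so it suffices to show that $\theta_v$ stays uniformly bounded away from $\pi/2$ for $v\in\BB$ and $M$ satisfying the cone condition. Suppose this fails: along a sequence $M_n$ with $M_n\CC\subset\BB^o$ and unit vectors $v_n\in\BB$ for which $\theta_{v_n}\to\pi/2$, pass to a subsequence so that $v_n\to v_{*}\in\overline\BB$ and $M_n/\|M_n\|\to L$. The vector $v_{*}$ then lies in the kernel direction of the top singular mode of $L$; tracking the image $L(\overline\CC)$, which by closedness of the cone inclusion must sit inside $\overline\BB$, together with the direction of $Lv_{*}$ (which is necessarily orthogonal to the top left singular direction of $L$) produces a configuration of directions inside $\overline\BB$ incompatible with the strict inclusion $\BB\subsetneq\CC^o$ once the margin between $\BB$ and $\partial\CC$ is unpacked quantitatively. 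This contradiction furnishes the uniform $c_1$.

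The main obstacle is making the compactness/contradiction step quantitative and uniform over the whole family of matrices satisfying the cone condition, especially in the genuine multicone setting where $\BB$ and $\CC$ may have several connected components and $\BB$ can in principle contain near-orthogonal directions. This is precisely the type of angular estimate that underlies Proposition~\ref{thm:BGprop} and the cone-contraction theory of \cite{BochiGourmelon2009, BochiMorris15}; the execution consists in verifying that the contraction rates depend only on the geometry of the pair $(\BB,\CC)$ and not on the particular matrices $A$ and $B$, from which the explicit constant $c_1$, and hence $Z$, can be extracted.
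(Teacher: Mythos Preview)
Your reduction to the sublemma is clean and correct, and it is exactly what the paper means by ``follows easily from Bochi and Morris \cite[Lemma~2.2]{BochiMorris15}'': your sublemma is precisely (a reformulation of) that lemma, and the two-step chaining with a seed vector $v_0\in\BB$ is the intended deduction. So at the level of strategy you and the paper coincide; the paper simply cites the sublemma instead of reproving it.

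Where your write-up has a genuine gap is in the compactness sketch for the sublemma. After passing to the limit $M_n/\|M_n\|\to L$ with $v_*\in\ker L$, you invoke ``the direction of $Lv_*$'', but $Lv_*=0$ has no direction, so the configuration you describe does not make sense as stated. The actual mechanism of contradiction is different: since $[v_*]\in\BB\subset\CC^o$, a fixed projective neighbourhood $U$ of $[v_*]$ lies in $\CC$; because $M_n v_*\to 0$ while $M_n v_*^\perp\to Lv_*^\perp\neq 0$, the invertible map $M_n$ sends $U$ onto an arc whose complement in $\RP^1$ is a tiny interval around $[Lv_*^\perp]$. Thus $M_n\CC\supset M_n U$ eventually covers almost all of $\RP^1$, contradicting $M_n\CC\subset\BB^o\subsetneq\RP^1$. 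If you want a self-contained argument rather than a citation, this is the step to spell out; the phrase ``incompatible with the strict inclusion $\BB\subsetneq\CC^o$'' in your sketch does not capture it, and the multicone issue you flag (near-orthogonal components of $\BB$) is in fact a red herring once the argument is organised this way.
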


\begin{proof}
  This follows easily from Bochi and Morris \cite[Lemma~2.2]{BochiMorris15}.
\end{proof}

The lemma guarantees that in every subsemigroup of $\bigcup_{n \in \N} \Sigma_n^\DD$ the norm (and hence also $\varphi^s$ for all $s\in[0,\infty)$) is almost multiplicative up to a uniformly chosen constant.

\begin{lemma}\label{lem:pressapprox}
Let $\Phi\colon\Sigma\to\R^M$ be a continuous potential. Then
$$
\lim_{n\to\infty}\tfrac{1}{n}P_{\DD,n}(\log\varphi^s +\left\langle q, S_n\Phi - \alpha \right\rangle) = P(\log\varphi^s +\left\langle q, \Phi - \alpha \right\rangle),
$$
uniformly for all $(q,\alpha,s)$ on any compact subset of $\R^M\times\PP(\Phi)\times[0,\infty)$.
\end{lemma}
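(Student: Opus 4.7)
The plan is to reduce the lemma to asymptotics of a ``single-level'' partition function. Define
\begin{equation*}
Z_n := \sum_{\iii\in\Sigma_n}\varphi^s(A_\iii)\sup_{\kkk\in[\iii]}\exp(\langle q,S_n\Phi(\kkk)-n\alpha\rangle),
\end{equation*}
and let $Z_n^\DD$ denote the same sum restricted to $\iii\in\Sigma_n^\DD$. By the definition of $P$ we have $\tfrac{1}{n}\log Z_n\to P(\log\varphi^s+\langle q,\Phi-\alpha\rangle)$, so it suffices to prove (a) $\tfrac{1}{n}P_{\DD,n}(\log\varphi^s+\langle q,S_n\Phi-\alpha\rangle)=\tfrac{1}{n}\log Z_n^\DD+o(1)$ and (b) $\tfrac{1}{n}\log Z_n^\DD\to P(\log\varphi^s+\langle q,\Phi-\alpha\rangle)$, both uniformly on compacts.

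For step~(a), the upper bound $\tfrac{1}{n}P_{\DD,n}\leq\tfrac{1}{n}\log Z_n^\DD$ follows from submultiplicativity of $\varphi^s$ (checked case by case on $s\in[0,1)$, $s\in[1,2)$, $s\geq 2$) together with the elementary fact that a supremum of a sum over $[\iii_1\cdots\iii_k]$ is bounded by a sum of suprema over the $[\iii_\ell]$. For the lower bound, Lemma~\ref{lem:dominbound} applied to consecutive matrices from the dominated alphabet gives $\varphi^s(A_{\iii_1\cdots\iii_k})\geq C(s)^{k-1}\prod_\ell\varphi^s(A_{\iii_\ell})$ with $C(s)>0$ continuous in $s$ on compacts: the three pieces in the definition of $\varphi^s$ combine cleanly because $|\det|$ is multiplicative and $\|\cdot\|$ is almost multiplicative in the dominated regime. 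For the Birkhoff factor, evaluating at any consistent $\kkk\in[\iii_1\cdots\iii_k]$ and invoking Lemma~\ref{lem:variance} loses only a factor $e^{-kn|q|\varepsilon_n}$ with $\varepsilon_n\downarrow 0$ independent of $(q,\alpha,s)$. Taking $\tfrac{1}{k}\log$ as $k\to\infty$ yields $P_{\DD,n}\geq\log Z_n^\DD+\log C(s)-n|q|\varepsilon_n$, and dividing by $n$ gives (a).

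For step~(b), the upper bound $Z_n^\DD\leq Z_n$ is immediate. For the lower bound, I exploit that $\iii\mapsto\jjj_1(\iii)\iii\jjj_2(\iii)$ is a bijection between $\Sigma_{n-2K}$ and $\Sigma_n^\DD$. Writing $A_\iii=A_{\jjj_1(\iii)}^{-1}A_{\jjj_1(\iii)\iii\jjj_2(\iii)}A_{\jjj_2(\iii)}^{-1}$ and using the uniform bound $\|A_{\jjj_j(\iii)}^{-1}\|\leq(\max_i\|A_i^{-1}\|)^K$ yields $\varphi^s(A_{\jjj_1\iii\jjj_2})\geq C_K(s)\varphi^s(A_\iii)$; an analogous comparison for the $\Phi$-factor follows from $|S_n\Phi(\kkk)-S_{n-2K}\Phi(\sigma^K\kkk)|\leq 2K\|\Phi\|_\infty$ together with Lemma~\ref{lem:variance}. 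Summing over $\iii\in\Sigma_{n-2K}$ gives $Z_n^\DD\geq C\cdot Z_{n-2K}$ with $C>0$ continuous in $(q,\alpha,s)$, so $\liminf_{n\to\infty}\tfrac{1}{n}\log Z_n^\DD\geq P$, completing (b).

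Uniform convergence on compact subsets of $\R^M\times\PP(\Phi)\times[0,\infty)$ then follows because all correction constants depend continuously on the parameters and the variation $\varepsilon_n$ from Lemma~\ref{lem:variance} is independent of them. The main technical obstacle is step~(a): the three-piece definition of $\varphi^s$ must satisfy a \emph{single} near-multiplicativity estimate under the cone domination governing $\Sigma_n^\DD$, with constants uniform on compact sets of $s\in[0,\infty)$. Once this estimate is in hand, the bijection argument of step~(b) and standard pressure asymptotics close out the proof.
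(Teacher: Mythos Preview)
Your proposal is correct and follows essentially the same approach as the paper: the paper also obtains the upper bound trivially from $\Sigma_n^\DD\subset\Sigma_n$, then for the lower bound uses Lemma~\ref{lem:dominbound} to get near-multiplicativity of $\varphi^s$ on the dominated alphabet, Lemma~\ref{lem:variance} to control the Birkhoff term, and finally the injection $\Sigma_{n-2K}\hookrightarrow\Sigma_n^\DD$ with uniform bounds on the prefixes/suffixes. The only difference is organizational---the paper compresses your steps~(a) and~(b) into a single chain of inequalities rather than passing explicitly through the intermediate quantity $Z_n^\DD$.
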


\begin{proof}
Since
\begin{align*}
\tfrac{1}{n}P_{\DD,n}(\log\varphi^s +\langle q, S_n\Phi - n\alpha \rangle) &\leq \tfrac{1}{n}P_n(\log\varphi^s +\langle q, S_n\Phi - n\alpha \rangle) \\
&= P(\log\varphi^s +\langle q, \Phi - \alpha \rangle)
\end{align*}
for all $n\in\N$, the upper bound follows immediately. To show the lower bound, note that, by Lemma \ref{lem:dominbound}, we have
\begin{equation*}
  \varphi^s(A_{\iii\jjj})\geq e^{-Zs}\varphi^s(A_\iii)\varphi^s(A_\jjj)
\end{equation*}
for all $\iii, \jjj \in \bigcup_{k=1}^\infty(\Sigma_n^\DD)^k$ and $n\in\N$. For simplicity, let us denote the $k$th Birkhoff sum with respect to $\sigma^n$ by $S_k^{(n)}\Phi(\iii)=\sum_{\ell=0}^{k-1}\Phi(\sigma^{\ell n}\iii)$ . Combining this with Lemma~\ref{lem:variance} gives
\begin{align*}
\tfrac1n &P_{\DD,n}(\log\varphi^s+\langle q, S_n\Phi - n\alpha \rangle)\\
&=\lim_{k\to\infty}\tfrac{1}{nk}\log\sum_{\iii_1,\ldots,\iii_k\in\Sigma_{n}^\DD}\varphi^s(\iii_1\cdots\iii_k)\exp\biggl(\sup_{\jjj\in[\iii_1\cdots\iii_k]}\langle q,S_k^{(n)}(S_n\Phi)-nk\alpha\rangle\biggr)\\
&\geq\lim_{k\to\infty}\tfrac{1}{nk}\log\sum_{\iii_1,\ldots,\iii_k\in\Sigma_{n}^\DD}e^{-Z(k-1)s}\varphi^s(\iii_1)\cdots\varphi^s(\iii_k)\\
&\qquad\cdot\exp\biggl(\sup_{\jjj\in[\iii_1\cdots\iii_k]}\langle q,S_k^{(n)}(S_n\Phi)-nk\alpha\rangle\biggr)\\
&\geq\lim_{k\to\infty}\tfrac{1}{nk}\log\sum_{\iii_1,\ldots,\iii_k\in\Sigma_{n}^\DD}e^{-Z(k-1)s}\varphi^s(\iii_1)\cdots\varphi^s(\iii_k)\\
&\qquad\cdot\exp\biggl(\sum_{\ell=1}^{k}\sup_{\jjj\in[\iii_\ell]}\langle q,(S_n\Phi)-n\alpha\rangle+k|q|\sum_{i=0}^{n-1}\Var_i(\Phi)\biggr)\\
&=\tfrac{1}{n}\log\sum_{\iii\in\Sigma_{n}^\DD}\varphi^s(\iii)\exp\biggl(\sup_{\jjj\in[\iii]}\langle q,(S_n\Phi)-n\alpha\rangle\biggr)+\frac{-Zs+|q|\sum_{i=0}^{n-1}\Var_i(\Phi)}{n}\\
&\geq\frac{n-2K}{n}\cdot\frac{1}{n-2K}\log\sum_{\iii\in\Sigma_{n-2K}}\varphi^s(\iii)\\
&\qquad\cdot\exp\biggl(\sup_{\jjj\in[\iii]}\langle q,(S_{n-2K}\Phi)-(n-2K)\alpha\rangle\biggr)+\frac{-Zs+|q|\sum_{i=0}^{n-1}\Var_i(\Phi)}{n}.
\end{align*}
The statement follows by taking $n\to\infty$.
\end{proof}

\begin{proposition}\label{cont}
For any continuous $\Phi\colon\Sigma\to\R^M$ and $\alpha\in\mathcal{P}(\Phi)^o$ the function
$$
  \alpha\mapsto\sup\{s\geq 0:\inf_{q\in\R^m} P(\log\varphi^s+\langle q,\Phi-\alpha\rangle)\geq 0\}
$$
is continuous.
\end{proposition}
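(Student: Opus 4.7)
The plan is to introduce
\begin{equation*}
F(s,\alpha) := \inf_{q \in \R^M} P(\log\varphi^s + \langle q, \Phi - \alpha\rangle),
\end{equation*}
so that $s_0(\alpha) := \sup\{s \ge 0 : F(s,\alpha) \ge 0\}$ is the quantity under study. I would split the argument into two ingredients: (i) $F(\cdot,\alpha)$ is strictly decreasing with a rate uniform in $\alpha$, and (ii) for each fixed $s$, the map $\alpha \mapsto F(s,\alpha)$ is continuous on $\PP(\Phi)^o$. Once these are in hand, $s_0(\alpha)$ is identified as the unique zero of $F(\cdot,\alpha)$, and a routine two-sided $\varepsilon$-argument closes the continuity statement: strict monotonicity gives $F(s_0(\alpha)-\varepsilon,\alpha) > 0 > F(s_0(\alpha)+\varepsilon,\alpha)$, and (ii) preserves both strict inequalities for $\alpha'$ in a neighbourhood of $\alpha$, forcing $|s_0(\alpha')-s_0(\alpha)| \le \varepsilon$.

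For (i), the hypothesis $\|A_i\| \le \lambda < 1$, together with the bound \eqref{eq:hyper}, yields $\log\varphi^{s'}(\iii) - \log\varphi^s(\iii) \le (s'-s)|\iii|\log\lambda$ for $s' \ge s$. This passes directly through the subadditive pressure and then through the infimum over $q$, giving $F(s',\alpha) \le F(s,\alpha) + (s'-s)\log\lambda$. Combined with $F(0,\alpha) \ge 0$ (apply the variational principle to any $\mu$ with $\int_\Sigma\Phi\dd\mu = \alpha$) and $F(s,\alpha) \to -\infty$ as $s \to \infty$, this pins $s_0(\alpha)$ down as the unique zero of $F(\cdot,\alpha)$.

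The substantive ingredient is (ii). The observation to exploit is that
\begin{equation*}
F(s,\alpha) = \inf_{q \in \R^M}\bigl( h(s,q) - \langle q, \alpha \rangle \bigr) = -h(s,\cdot)^*(\alpha),
\end{equation*}
where $h(s,q) := P(\log\varphi^s + \langle q, \Phi\rangle)$ and $h(s,\cdot)^*$ denotes its Legendre transform in the $q$ variable. Since $P$ is convex in its potential and $q \mapsto \langle q,\Phi\rangle$ is affine from $\R^M$ into $C(\Sigma)$, $h(s,\cdot)$ is convex, hence $h(s,\cdot)^*$ is convex and lower semicontinuous on $\R^M$. Standard convex analysis then gives that $h(s,\cdot)^*$ is continuous on the interior of its effective domain. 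To place $\PP(\Phi)^o$ inside that interior, the subadditive variational principle of Cao, Feng, and Huang \cite{CaoFengHuang} (or K\"aenm\"aki \cite{Kaenmaki2004}) gives
\begin{equation*}
h(s,q) \ge h(\mu) + \lim_{n\to\infty}\tfrac{1}{n}\int_\Sigma\log\varphi^s(A_{\iii|_n})\dd\mu(\iii) + \langle q, \alpha\rangle
\end{equation*}
for every $\mu\in\MM_\sigma(\Sigma)$ with $\int_\Sigma\Phi\dd\mu = \alpha$, whence $h(s,\cdot)^*(\alpha)$ is finite for every $\alpha \in \PP(\Phi)$. Since $\PP(\Phi)$ is compact and convex (as the image of the compact convex $\MM_\sigma(\Sigma)$ under the continuous linear map $\mu \mapsto \int_\Sigma\Phi\dd\mu$), $\PP(\Phi)^o$ is contained in the interior of $\mathrm{dom}(h(s,\cdot)^*)$, which is exactly what (ii) requires.

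The main obstacle is this last interior issue inside (ii): the infimum defining $F(s,\alpha)$ need not be attained a priori, and without knowing that the minimising $q$ can be kept in a locally bounded set as $\alpha$ varies, one cannot upgrade lower semicontinuity of $-F(s,\cdot)$ to continuity. The variational lower bound on $h(s,q)$ is the clean way to certify this; a more hands-on alternative would be to use the interior condition to produce an invariant measure realising any prescribed direction from $\alpha$, and thereby show that the minimiser in $q$ stays in a ball whose radius is bounded uniformly on compact subsets of $\PP(\Phi)^o$.
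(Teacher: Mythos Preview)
Your argument is correct and takes a genuinely different route from the paper's. Both proofs establish the same two ingredients---uniform strict monotonicity of $F(\cdot,\alpha)$ in $s$, and continuity of $F(s,\cdot)$ on $\PP(\Phi)^o$---and both use the subadditive variational principle as the key input. The difference lies in how ingredient (ii) is obtained. You package $-F(s,\alpha)$ as the Legendre transform $h(s,\cdot)^*(\alpha)$ and invoke the standard fact that a convex function on $\R^M$ is continuous on the interior of its effective domain; the variational principle then serves only to certify finiteness of $h(s,\cdot)^*$ on $\PP(\Phi)$. The paper instead argues directly: it uses the interior condition to produce measures with integral $\alpha\pm\delta e_i$, which via the variational principle forces the minimising $q$ to lie in a fixed box $[-tC/\delta,tC/\delta]^M$, and then appeals to Lemma~\ref{lem:pressapprox} for uniform continuity of the pressure on that box. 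Amusingly, this is exactly the ``more hands-on alternative'' you sketch in your final sentence. Your approach is cleaner and self-contained---it avoids the detour through Lemma~\ref{lem:pressapprox} entirely---while the paper's approach yields somewhat more explicit quantitative control (the bound on the minimiser is uniform over compact subsets of $\PP(\Phi)^o$) and makes the role of Lemma~\ref{lem:pressapprox} in the overall scheme more visible. One small redundancy: you invoke convexity of $h(s,\cdot)$ to deduce that $h(s,\cdot)^*$ is convex, but the Legendre transform is always convex regardless, so that step is not needed.
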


\begin{proof}
Let $t$ satisfy $P(\log\varphi^t)=0$. In this case for any $s> t$
$$\inf_{q\in\R^m} P(\log\varphi^s+\langle q,\Phi-\alpha\rangle)\leq P(\log\varphi^s)<0$$
and so
$$
\sup\{s\geq 0:\inf_{q\in\R^m} P(\log\varphi^s+\langle q,\Phi-\alpha\rangle)\geq 0\}\leq t.
$$
By the variational principle (see \cite{CaoFengHuang, Kaenmaki2004}), we have
\begin{equation}\label{eq:varpri}
\begin{split}
P(\log\varphi^s+\langle q,\Phi-\alpha\rangle)&\geq h(\mu)+\lim_{n\to\infty}\tfrac1n \int_\Sigma \log\varphi^s(A_{\iii|_n})\dd\mu(\iii)\\
&\qquad+\Bigl\langle q,\int_\Sigma\Phi \dd\mu-\alpha\Bigr\rangle\\
&\geq -Ct+\Bigl\langle q,\int_\Sigma\Phi \dd\mu-\alpha\Bigr\rangle
\end{split}
\end{equation}
for all $\mu\in\MM_\sigma(\Sigma)$ and $s \in [0,t]$, where $C=\max\{\log\norm{A_i^{-1}} : i\in \{1,\ldots,N\}\}$. Since $\alpha\in\mathcal{P}(\Phi)^o$, there exists $\delta>0$ such that
$$
\{\beta \in \R^M : |\alpha_i-\beta_i| \leq \delta \text{ for all } i \in \{1,\ldots,M\}\} \subset \PP(\Phi)^o.
$$
Hence, for each $i \in \{1,\ldots,M\}$ there exist $\mu_1,\mu_2\in\MM_\sigma(\Sigma)$ such that $\int_\Sigma\Phi \dd\mu_1=(\alpha_1,\ldots,\alpha_i+\delta,\ldots,\alpha_M)$ and $\int_\Sigma\Phi \dd\mu_2=(\alpha_1,\ldots,\alpha_i-\delta,\ldots,\alpha_M)$ and therefore, by \eqref{eq:varpri},
$$
P(\log\varphi^s+\langle q,\Phi-\alpha\rangle)\geq |q_i|\delta-tC.
$$
Thus, for every $s \in [0,t]$ we have
$$
P(\log\varphi^s+\langle q,\Phi-\alpha\rangle)>0
$$
unless $q\in [-tC/\delta,tC/\delta]^M$.

Since, by Lemma \ref{lem:pressapprox},
$$
(q,\alpha,s)\mapsto P(\log \varphi^s +\left\langle q, \Phi - \alpha \right\rangle)
$$
is uniformly continuous on $[-tC/\delta,tC/\delta]^M\times B(\alpha,\delta) \times [0,t]$. Therefore, for any $\eta>0$, we can choose $0<\eps<\delta$ such that, for any $s \in [0,t]$,
$$
|\inf_{q\in\R^m}\{P(\log\varphi^s+\langle q,\Phi-\beta\rangle)\}-\inf_{q\in\R^m}\{P(\log\varphi^s+\langle q,\Phi-\alpha\rangle)\}|\leq\eta
$$
for all $\beta\in B(\alpha,\eps)$. Notice also that for $s_1>s_2$ we have
\begin{align*}
(s_2-s_1)C_1&\leq P(\log\varphi^{s_1}+\langle q,\Phi-\beta\rangle)-P(\log\varphi^{s_2}+\langle q,\Phi-\beta\rangle)\\ &\leq (s_2-s_1)C,
\end{align*}
where $C_1=\min\{-\log\norm{A_i}:i \in \{1,\ldots,N\}\}$. This completes the proof.
\end{proof}

We are now ready to complete the proof of Theorem \ref{thm:mainbirkhoffint}.

\begin{proof}[Proof of Theorem~\ref{thm:mainbirkhoffint}]
Note that, for any $\jjj\in\Sigma$, we have $\lim_{n\to\infty}\tfrac{1}{n}S_n\Phi(\jjj)=\alpha$ if and only if $\lim_{k\to\infty}\tfrac{1}{nk}S_k^{(n)}S_n\Phi(\jjj)=\alpha$ for any $n\in\N$. Indeed, for any $n\in\N$ and $m\in\N$ with $nk\leq m<n(k+1)$, we have
\begin{align*}
  \bigl|\tfrac{1}{m}S_m\Phi(\jjj)-\tfrac{1}{nk}S_{nk}\Phi(\jjj)\bigr|&=\biggl|\tfrac{1}{m}S_m\Phi(\jjj)-\biggl(1+\frac{m-nk}{nk}\biggr)\tfrac{1}{m}S_{nk}\Phi(\jjj)\biggr|\\
  &\leq\biggl|\tfrac{1}{m}\sum_{\ell=nk+1}^m\Phi(\sigma^\ell\jjj)\biggr|+\tfrac{1}{k}\bigl|\tfrac{1}{nk}S_{nk}\Phi(\jjj)\bigr|\\
  &\leq\frac{2\sup|\Phi|}{k}\to 0
\end{align*}
as $k\to\infty$. Hence,
\begin{equation*}
  \pi E_{\Phi}(\alpha)=\pi E_{S_n\Phi}(n\alpha)\supset\pi E_{S_n\Phi}^\DD(n\alpha),
\end{equation*}
where
$$
E_{S_n\Phi}^\DD(n\alpha)=\{\jjj\in\left(\Sigma_n^\DD\right)^\N:\lim_{k\to\infty}\tfrac{1}{k}S_k^{(n)}S_n\Phi(\jjj)=n\alpha\}.
$$
Write
\begin{align*}
  s_0(\alpha) &= \sup\{ s \ge 0 : \inf_{q \in \R^M}P(\log\varphi^s +\langle q, \Phi - \alpha \rangle) \geq 0 \}, \\
  s_n(\alpha) &= \sup\{ s \ge 0 : \inf_{q \in \R^M}P_{\DD,n}(\log\varphi^s +\langle q, S_n\Phi - n\alpha \rangle) \geq 0 \},
\end{align*}
and note that, by Lemma \ref{lem:pressapprox},
\begin{equation*}
  \lim_{n\to\infty} s_n(\alpha) = s_0(\alpha).
\end{equation*}
By Theorem \ref{thm:domibirkhoff} and Proposition~\ref{thm:birkhoffupper}, we have
\begin{equation*}
  s_n(\alpha) = \dimh(\pi E_{S_n\Phi}^\DD(n\alpha)) \leq \dimh(\pi E_{\Phi}(\alpha)) \leq s_0(\alpha).
\end{equation*}
Therefore, by letting $n \to \infty$, we see that
\begin{equation*}
  \dimh(\pi E_{\Phi}(\alpha)) = s_0(\alpha).
\end{equation*}

On the other hand, let $\mu \in \MM_\sigma(\Sigma)$ be such that $\int_\Sigma \Phi \dd\mu = \alpha$. By the variational principle (see \cite{CaoFengHuang, Kaenmaki2004}), if $s < \diml(\mu)$, then
\begin{equation*}
  P(\log\fii^s + \langle q,\Phi-\alpha \rangle) \geq h(\mu) + \lim_{n\to\infty}\tfrac1n\int_\Sigma\log\fii^s(A_{\iii|_n})\dd\mu(\iii) > 0
\end{equation*}
for all $q \in \R^M$. Therefore, $\diml(\mu) \le s_0(\alpha)$. This observation, together with Theorem~\ref{thm:domibirkhoff}, implies
\begin{align*}
  s_n(\alpha) &= \sup\{\diml(\mu):\mu\in\MM_{\sigma^n}((\Sigma^{\DD}_n)^\N)\text{ and }\int_\Sigma\Phi \dd\mu=\alpha\} \\
  &\leq \sup\{\diml(\mu):\mu\in\MM_\sigma(\Sigma)\text{ and }\int_\Sigma\Phi \dd\mu=\alpha\} \\
  &\leq s_0(\alpha).
\end{align*}
By letting $n \to \infty$, we see that
\begin{equation*}
  s_0(\alpha) = \sup\{\diml(\mu):\mu\in\MM_\sigma(\Sigma)\text{ and }\int_\Sigma\Phi \dd\mu=\alpha\}.
\end{equation*}
The fact that the spectrum is continuous follows from Proposition \ref{cont}.
\end{proof}

\section{Lyapunov exponents}\label{sec:lyap}

Let us first study the Lyapunov spectrum for dominated systems.

\begin{proposition}\label{cor:lyapdomin}
Let $(A_1+v_1,\ldots,A_N+v_N)$ be an affine IFS on $\R^2$ satisfying the SOSC. If $(A_1,\ldots,A_N) \in GL_2(\R)^N$ is strongly irreducible and dominated such that the generated subgroup of the normalized matrices is non-compact, then
\begin{align*}
  \dimh(\pi E_\chi(\alpha)) &= \sup\{\diml(\mu) :\mu\in\MM_\sigma(\Sigma)\text{ and }\chi(\mu)=\alpha\} \\
  &= \min\left\{\frac{h_\mathrm{top}(E_\chi(\alpha))}{\alpha_1},1+\frac{h_\mathrm{top}(E_\chi(\alpha))-\alpha_1}{\alpha_2}\right\}
\end{align*}
for all $\alpha = (\alpha_1,\alpha_2) \in \PP(\chi)^o$.
\end{proposition}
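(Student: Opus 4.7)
The key observation is that under the domination hypothesis, Proposition~\ref{thm:BGprop} together with the additive identity $\log|\det A_{\iii|_n}| = S_n\log|\det A_{\iii|_1}|$ identifies the pointwise Lyapunov exponents with Birkhoff averages of the H\"older continuous potential $\Psi=(\Psi_1,\Psi_2)$ defined in \eqref{eq:Psi-def}. Concretely, for every $\iii\in\Sigma$ and $j\in\{1,2\}$ the limit $\chi_j(\iii)=\lim_{n\to\infty}\frac{1}{n}S_n\Psi_j(\iii)$ holds whenever either side exists, and correspondingly $\chi(\mu)=\int_\Sigma\Psi\dd\mu$ for every $\mu\in\MM_\sigma(\Sigma)$. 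Consequently $E_\chi(\alpha)=E_\Psi(\alpha)$ and $\PP(\chi)=\PP(\Psi)$. Since all hypotheses of Theorem~\ref{thm:domibirkhoff} are in force (SOSC, strong irreducibility, domination, non-compact normalized semigroup), applying it with the continuous $\R^2$-valued potential $\Phi=\Psi$ yields directly the first equality
\begin{equation*}
\dimh(\pi E_\chi(\alpha))=\sup\{\diml(\mu):\mu\in\MM_\sigma(\Sigma)\text{ and }\chi(\mu)=\alpha\}.
\end{equation*}

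For the second equality, observe that domination provides a uniform gap $\chi_2(\mu)-\chi_1(\mu)\geq c>0$ for all invariant $\mu$, so in particular $\alpha_2>\alpha_1>0$ for $\alpha\in\PP(\chi)^o$. With $\chi(\mu)=\alpha$ fixed, the Lyapunov dimension
\begin{equation*}
\diml(\mu)=\min\biggl\{\frac{h(\mu)}{\alpha_1},\,1+\frac{h(\mu)-\alpha_1}{\alpha_2}\biggr\}
\end{equation*}
is the minimum of two affine strictly increasing functions of $h(\mu)$, hence continuous and strictly increasing in $h(\mu)$ alone. Therefore the supremum commutes with the minimum:
\begin{equation*}
\sup\{\diml(\mu):\chi(\mu)=\alpha\}=\min\biggl\{\frac{H_\alpha}{\alpha_1},\,1+\frac{H_\alpha-\alpha_1}{\alpha_2}\biggr\},
\end{equation*}
where $H_\alpha:=\sup\{h(\mu):\mu\in\MM_\sigma(\Sigma),\,\chi(\mu)=\alpha\}$. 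The remaining task is to identify $H_\alpha$ with $h_\mathrm{top}(E_\chi(\alpha))$. The inequality $H_\alpha\leq h_\mathrm{top}(E_\chi(\alpha))$ is Bowen's bound applied to any ergodic $\mu$ with $\chi(\mu)=\alpha$ (by Kingman's theorem such a $\mu$ is carried by $E_\chi(\alpha)$), extended to general invariant $\mu$ by passing to the ergodic decomposition. The converse inequality is the Takens--Verbitskiy variational principle for the continuous $\R^2$-valued potential $\Psi$ on the interior point $\alpha\in\PP(\Psi)^o$, applied coordinate-wise to the counting formula \eqref{entropy}.

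The only delicate point is the vector-valued Takens--Verbitskiy identity $H_\alpha=h_\mathrm{top}(E_\chi(\alpha))$; once this is in hand, everything else reduces to Theorem~\ref{thm:domibirkhoff} and elementary monotonicity. The identity itself is obtained by a routine $M$-dimensional adaptation of the scalar argument in \cite{TV}: one approximates the counting side in \eqref{entropy} by measures of maximal entropy among those whose $\Psi$-integrals lie in a small ball around $\alpha$, takes a weak$^*$ limit, and uses that the functional $\mu\mapsto\int\Psi\dd\mu$ is weak$^*$ continuous (since $\Psi$ is continuous) together with upper semicontinuity of entropy on the expansive shift $(\Sigma,\sigma)$ to place the limit in $\{\mu:\chi(\mu)=\alpha\}$ without losing entropy.
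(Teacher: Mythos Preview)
Your reduction to Theorem~\ref{thm:domibirkhoff} via $E_\chi(\alpha)=E_\Psi(\alpha)$ is exactly what the paper does for the first equality. For the second equality the two arguments diverge. The paper exploits the \emph{pressure} characterisation that Theorem~\ref{thm:domibirkhoff} supplies for free: restricting the defining sum to words $\iii\in\Sigma_n$ with $|\tfrac1nS_n\Psi(\iii)-\alpha|<\varepsilon$ gives directly
\[
\inf_{q\in\R^2}P(\log\varphi^s+\langle q,\Psi-\alpha\rangle)\ \geq\ h_{\rm top}(E_\chi(\alpha))-\langle s'(s),\alpha\rangle,
\]
which yields $\min\{\cdot\}\leq\sup\{s:\inf_qP\geq 0\}=\dimh(\pi E_\chi(\alpha))$; the reverse inequality is the elementary bound $\diml(\mu)\leq\min\{\cdot\}$ for each admissible $\mu$. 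Your route bypasses the pressure entirely, reducing instead to the identity $H_\alpha:=\sup\{h(\mu):\chi(\mu)=\alpha\}=h_{\rm top}(E_\chi(\alpha))$, i.e.\ the vector-valued conditional variational principle of \cite{TV}. Both approaches are valid; the paper's counting argument is self-contained in that it does not need to quote the full variational principle, while yours is more transparent in isolating the single black box needed beyond Theorem~\ref{thm:domibirkhoff}.

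One genuine wrinkle in your sketch: the extension of Bowen's bound $h(\mu)\leq h_{\rm top}(E_\chi(\alpha))$ from ergodic to general invariant $\mu$ ``by passing to the ergodic decomposition'' does not work as stated. The ergodic components $\mu_\omega$ of an invariant $\mu$ with $\int\Psi\dd\mu=\alpha$ typically satisfy $\int\Psi\dd\mu_\omega\neq\alpha$, so Bowen only gives $h(\mu_\omega)\leq h_{\rm top}\bigl(E_\Psi(\int\Psi\dd\mu_\omega)\bigr)$; concluding $h(\mu)\leq h_{\rm top}(E_\Psi(\alpha))$ then requires concavity of $\beta\mapsto h_{\rm top}(E_\Psi(\beta))$, which is essentially what you are trying to prove. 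The clean fix is simply to invoke the full conditional variational principle of \cite{TV} (or \cite{FanFengWu,BarreiraSaussolSchmeling}) for \emph{both} directions of $H_\alpha=h_{\rm top}(E_\Psi(\alpha))$, rather than sketching one of them via Bowen plus ergodic decomposition. (The paper itself makes the same implicit appeal when it asserts that $\diml(\mu)\leq\min\{\cdot\}$ ``clearly'' holds for every invariant $\mu$ with $\chi(\mu)=\alpha$.)
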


\begin{proof}
Let $\Psi \colon \Sigma \to \R^2$ be as in \eqref{eq:Psi-def}. By Proposition~\ref{thm:BGprop}, $E_\chi(\alpha)=E_\Psi(\alpha)$ for all $\alpha=(\alpha_1,\alpha_2)\in\R^2$. Thus, the statement follows from Theorem~\ref{thm:domibirkhoff} and the following calculation.
Write $\Sigma_n(\alpha,\varepsilon)=\{\iii|_n\in\Sigma_n:|\alpha-\tfrac{1}{n}S_n\Psi(\iii)|<\varepsilon\}$. Observe that
\begin{align*}
  P(\langle s',\Psi\rangle+\langle q,\Psi-\alpha\rangle) &= \lim_{n\to\infty}\tfrac{1}{n}\log\sum_{\iii\in\Sigma_n}\exp(\langle q+s',S_n\Psi(\iii)\rangle-n\langle q,\alpha\rangle)\\
   &\geq \liminf_{n\to\infty}\tfrac{1}{n}\log\sum_{\iii\in\Sigma_n(\alpha,\varepsilon)}\exp(\langle q+s',S_n\Psi(\iii)\rangle-n\langle q,\alpha\rangle)\\
   &\geq -\langle s',\alpha\rangle-(|q|+|s'|)\varepsilon+\liminf_{n\to\infty}\tfrac{1}{n}\log\#\Sigma_n(\alpha,\varepsilon),
\end{align*}
for all $s',q \in \R^2$.
Recalling the definition of the topological entropy, as $\varepsilon>0$ is arbitrary, we get that
$$
\inf_{q\in\R^2}P(\log\varphi^s+\langle q,\Psi-\alpha\rangle)\geq
\begin{cases}
  h_\mathrm{top}(E_\chi(\alpha))-s\alpha_1, & \text{ if }0 \leq s < 1, \\
  h_\mathrm{top}(E_\chi(\alpha))-\alpha_1-(s-1)\alpha_2, & \text{ if }1 \leq s < 2, \\
  h_\mathrm{top}(E_\chi(\alpha))-(\alpha_1+\alpha_2)s/2, & \text{ if }2 \leq s < \infty,
\end{cases}
$$
by choosing $s'$ to be $(s,0)$, $(1,s-1)$, and $(s/2,s/2)$, respectively. Thus,
\begin{align*}
\min\biggl\{\frac{h_\mathrm{top}(E_\chi(\alpha))}{\alpha_1},&1+\frac{h_\mathrm{top}(E_\chi(\alpha))-\alpha_1}{\alpha_2}\biggr\}\\
&\leq \sup\{s \ge 0 : \inf_{q \in \R^2} P(\log\fii^s + \langle q,\Psi-\alpha \rangle) \ge 0\}.
\end{align*}
Since, for every $\mu\in\MM_\sigma(\Sigma)$ with $\chi(\mu)=\alpha$, we clearly have
$$
\diml(\mu)\leq\min\biggl\{\frac{h_\mathrm{top}(E_\chi(\alpha))}{\alpha_1},1+\frac{h_\mathrm{top}(E_\chi(\alpha))-\alpha_1}{\alpha_2}\biggr\},
$$
which completes the proof.
\end{proof}

Now we turn to general systems. Recall that
$$
\psi^q(\iii)=\|A_\iii\|^{q_1}\|A_\iii^{-1}\|^{-q_2}
$$
for all $q = (q_1,q_2) \in \R^2$, so $\psi^{s'(s)}=\fii^s$, where $s'$ is defined in \eqref{eq:s'}.

\begin{proposition}\label{thm:lyapupper}
  Let $(A_1+v_1,\ldots,A_N+v_N)$ be an affine IFS on $\R^2$. If $(A_1,\ldots,A_N) \in GL_2(\R)^N$ is irreducible, then
  $$
  \dimh(\pi E_{\chi}(\alpha))\leq \sup\{ s \ge 0 : \inf_{q \in \R^2}\{P(\log\psi^{s'(s)-q})-\langle q,\alpha \rangle\}\geq 0 \}
  $$
  for all $\alpha\in\PP(\chi)^o$, where $s'\colon\R_+ \to \R^2$ is defined in \eqref{eq:s'}.
\end{proposition}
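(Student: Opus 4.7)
The plan is to adapt the covering argument of Proposition~\ref{thm:birkhoffupper}. Since we do not assume domination, the maps $\iii \mapsto \log\|A_{\iii|_n}\|$ and $\iii \mapsto \log\|A_{\iii|_n}^{-1}\|^{-1}$ are not $n$-step Birkhoff sums of continuous potentials, so that proposition cannot be invoked directly. The key algebraic observation is that $\fii^s = \psi^{s'(s)}$, so for any $q \in \R^2$ one can write $\fii^s(A_\iii) = \psi^{s'(s)-q}(\iii)\,\|A_\iii\|^{q_1}\|A_\iii^{-1}\|^{-q_2}$; the second factor is nearly $e^{-m\langle q,\alpha\rangle}$ on cylinders where the Lyapunov exponents are close to $\alpha$, with a multiplicative slack comparable to $e^{2m|q|/r}$ that vanishes as $r \to \infty$.

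I would first set up the cover: for $m,r \in \N$, define
\begin{equation*}
  D_{m,r} = \bigl\{ \iii \in \Sigma_m : \bigl|\tfrac{1}{m}\log\|A_\iii\| + \alpha_1\bigr| < \tfrac{1}{r} \text{ and } \bigl|\tfrac{1}{m}\log\|A_\iii^{-1}\|^{-1} + \alpha_2\bigr| < \tfrac{1}{r} \bigr\},
\end{equation*}
so that $E_\chi(\alpha) \subset \bigcap_{r=1}^\infty \bigcup_{n=1}^\infty \bigcap_{m=n}^\infty \bigcup_{\iii \in D_{m,r}} [\iii]$. Writing $s_0(\alpha)$ for the supremum in the claim, I would fix $s > s_0(\alpha)$ and choose $q \in \R^2$ with $P_0 := P(\log\psi^{s'(s)-q}) - \langle q,\alpha\rangle < 0$. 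The elementary bounds $\|A_\iii\|^{q_1} \leq e^{-mq_1\alpha_1 + m|q_1|/r}$ and $\|A_\iii^{-1}\|^{-q_2} \leq e^{-mq_2\alpha_2 + m|q_2|/r}$, valid on $D_{m,r}$ regardless of the signs of $q_1,q_2$, together with the identity above yield $\fii^s(A_\iii) \leq \psi^{s'(s)-q}(\iii)\,e^{-m\langle q,\alpha\rangle + 2m|q|/r}$ on $D_{m,r}$. Summing over $\iii$ and invoking the definition of $P(\log\psi^{s'(s)-q})$ would produce, for every $\eps > 0$ and all large $m$,
\begin{equation*}
  \sum_{\iii \in D_{m,r}}\fii^s(A_\iii) \leq e^{m(P_0 + \eps + 2|q|/r)}.
\end{equation*}
Since $q$ is now fixed, I would take $r$ large and $\eps$ small to make the exponent at most $P_0/2 < 0$.

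Finally, to convert this to a Hausdorff measure bound, I would cover each $\pi[\iii]$ by one ball of radius proportional to $\|A_\iii\|$ when $0 \leq s < 1$, and by $\lceil\|A_\iii\|/\|A_\iii^{-1}\|^{-1}\rceil$ balls of radius proportional to $\|A_\iii^{-1}\|^{-1}$ when $1 \leq s < 2$; in either case the $s$-dimensional mass is comparable to $\fii^s(A_\iii)$. Since $\|A_\iii\| \leq \lambda^m$ with $\lambda = \max_i\|A_i\| < 1$, all such radii lie below any prescribed $\delta > 0$ once $m \geq N(\delta)$, and the geometric tail estimate $\HH^s_\delta(\pi E_\chi(\alpha)) \leq C\sum_{m \geq N(\delta)} e^{mP_0/2} \to 0$ as $\delta \downarrow 0$ gives $\dimh(\pi E_\chi(\alpha)) \leq s$; letting $s \downarrow s_0(\alpha)$ concludes. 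The main bookkeeping obstacle, just as in Proposition~\ref{thm:birkhoffupper}, is the order of quantifiers: the $2|q|/r$ slack must be absorbed \emph{after} $q$ (and hence $|P_0|$) has been fixed, which dictates placing the $r \to \infty$ intersection outermost in the covering.
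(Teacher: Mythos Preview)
Your proposal is correct and follows essentially the same covering argument as the paper, which is likewise modeled on Proposition~\ref{thm:birkhoffupper}. The only difference is cosmetic: you absorb the $2|q|/r$ slack directly into the pressure estimate (choosing $r$ large after $q$ is fixed) and bound $\mathcal{H}^s$, whereas the paper carries the slack as an adjustment to the Hausdorff exponent, bounding $\mathcal{H}^{s-|q|/(r\log\lambda)}$ and letting $r\to\infty$ at the end.
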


\begin{proof}
The proof is almost identical to the proof of Proposition \ref{thm:birkhoffupper}.
Observe that
$$
E_{\chi}(\alpha)\subset\bigcap_{r=1}^\infty\bigcup_{n=1}^\infty\bigcap_{m=n}^\infty\bigcup_{\iii\in D_{m,r}}[\iii],
$$
where
$$
D_{m,r}=\{\iii\in\Sigma_m:|-\tfrac{1}{m}\log\psi^{(1,1)}(\jjj)-\alpha|<\tfrac{1}{r}\}.
$$
Note that $\psi^{(1,1)}(\jjj)$ is constant on $m$-th level cylinders.
Therefore, for every $\iii\in D_{m,r}$, we have
\begin{equation*}
-\frac{m|q|}{r}\leq\langle q,-\log\psi^{(1,1)}(\jjj)-m\alpha\rangle.
\end{equation*}
Let $s_0(\alpha)=\sup\{ s \ge 0 :  \inf_{q \in \R^2}\{P(\log\psi^{s'(s)-q})-\langle q,\alpha \rangle\} \geq 0 \}$ and choose $s > s_0(\alpha)$. Thus, there exists $q=q(\alpha,s)$ such that $P(\log\psi^{s'(s)-q})<\left\langle q,\alpha \right\rangle$. Let $\varepsilon>0$ be so small that there is $\gamma>0$ such that
$$
  \sum_{\iii\in \Sigma_n}\psi^{s'(s)-q}(\iii)<e^{n(\langle q,\alpha \rangle-\varepsilon)}
$$
for all $n\geq-\log\gamma$. We can find $0<\lambda<1$ such that $\norm{A_{\mathbf{i}}}\leq\lambda^{|\mathbf{i}|}$ for all $\mathbf{i}\in\Sigma^{*}$ and thus
\[
\psi^{s'(s+c)}(\iii)\leq \psi^{s'(s)}(\iii) e^{c|\iii|\log\lambda}
\]
for all $c \geq 0$, and hence
\begin{align*}
\mathcal{H}^{s-|q|/r\log\lambda}_\delta(\pi E_\chi(\alpha))&\leq\sum_{m=\lceil-\log\delta\rceil}^{\infty}\sum_{\iii\in D_{m,r}} \varphi^{s-|q|/r\log\lambda}(\iii) \\
&\leq\sum_{m=\lceil-\log\delta\rceil}^{\infty}\sum_{\iii\in D_{m,r}} \varphi^s(\iii)e^{-m|q|/r}\\
&\leq\sum_{m=\lceil-\log\delta\rceil}^{\infty}\sum_{\iii\in D_{m,r}} \varphi^s(\iii)e^{\langle q,-\log\psi^{(1,1)}(\jjj)-m\alpha\rangle}\\
&=\sum_{m=\lceil-\log\delta\rceil}^{\infty}e^{-m\langle q,\alpha\rangle}\sum_{\iii\in D_{m,r}} \psi^{s'(s)-q}(\iii)\\
&\leq\sum_{m=\lceil-\log\delta\rceil}^{\infty}e^{-m\varepsilon} \to 0
\end{align*}
as $\delta\to0$ for all $r\geq1$. Thus, $\dimh(\pi E_\chi(\alpha))\leq s-c|q|/r$. Since $r\geq1$ and $s>s_0(\alpha)$ were arbitrary, we get $\dimh(\pi E_\chi(\alpha))\leq s_0(\alpha)$.
\end{proof}

Let $\Sigma_n^\DD$ be as in \eqref{eq:dominsub} and let $\Phi_n\colon(\Sigma_n^\DD)^\N\to\R$ be a modified Lyapunov potential defined by
$$
\Psi_n(\iii_1\iii_2\cdots)=(-\log\|A_{\iii_1}|V_n(\sigma^n\iii)\|,-\log|\det(A_{\iii_1})|+\log\|A_{\iii_1}|V_n(\sigma^n\iii)\|),
$$
where $V_n\colon(\Sigma_n^\DD)^\N\to\RP$ is the subspace defined similarly as in \eqref{eq:oseledetsspace}.

\begin{lemma}\label{lem:pressapproxlyap}
We have
$$
  \lim_{n\to\infty}\tfrac{1}{n}P_{\DD,n}(\langle q-s'(s),\Psi_n\rangle) = P(\log\psi^{s'(s)-q})
$$
uniformly for all $(q,s)$ on any compact subset of $\R^2\times[0,\infty)$, where $s'\colon\R_+ \to \R^2$ is defined in \eqref{eq:s'}.
\end{lemma}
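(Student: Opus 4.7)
My approach mirrors the proof of Lemma \ref{lem:pressapprox}, replacing the continuous potential $\Phi$ with the Lyapunov cocycle $\Psi_n$ and using Proposition \ref{thm:BGprop} applied to the dominated subsystem in place of the variance bound (Lemma \ref{lem:variance}). The first step is the key comparison: apply Proposition \ref{thm:BGprop} to the tuple $(A_\iii)_{\iii\in\Sigma_n^\DD}$, noting that by Lemma \ref{lem:dominbound} the resulting constant $C>0$ can be chosen independently of $n$ (all such subsystems send the fixed multicone $\CC$ from Lemma \ref{lem:consdomi} into the same $\BB\subset\CC^o$). Combined with $|\det A|=\|A\|\cdot\|A^{-1}\|^{-1}$ in dimension two, this yields
$$
\Bigl|\sum_{\ell=0}^{k-1}\Psi_n(\sigma^{n\ell}\jjj)-\bigl(-\log\|A_{\jjj|_{nk}}\|,\,-\log\|A_{\jjj|_{nk}}^{-1}\|^{-1}\bigr)\Bigr|\le C
$$
for every $\jjj\in(\Sigma_n^\DD)^\N$ and $k\in\N$. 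Pairing against $q-s'(s)$ gives $\langle q-s'(s),\sum_\ell\Psi_n(\sigma^{n\ell}\jjj)\rangle=\log\psi^{s'(s)-q}(A_{\jjj|_{nk}})+O(|q-s'(s)|)$, uniformly on compact subsets of $\R^2\times[0,\infty)$.

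For the upper bound $\tfrac 1n P_{\DD,n}(\langle q-s'(s),\Psi_n\rangle)\le P(\log\psi^{s'(s)-q})$, the inclusion $(\Sigma_n^\DD)^k\subset\Sigma_{nk}$ and the positivity of $\psi^{s'(s)-q}$ let us dominate the exponential sum defining $P_{\DD,n}$ by $e^{C|q-s'(s)|}\sum_{\iii\in\Sigma_{nk}}\psi^{s'(s)-q}(\iii)$; dividing by $nk$ and letting $k\to\infty$ gives the bound for each $n$, and hence in the limit.

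For the lower bound, the same comparison identifies $\tfrac 1n P_{\DD,n}(\langle q-s'(s),\Psi_n\rangle)$ with $\tfrac 1n \lim_k \tfrac 1k\log\sum_{(\Sigma_n^\DD)^k}\psi^{s'(s)-q}$, up to an error $O(1/(nk))$. The next ingredient is quasi-multiplicativity of $\psi^{s'(s)-q}$ along concatenations from $\bigcup_k(\Sigma_n^\DD)^k$: Lemma \ref{lem:dominbound} gives $\|A_{\iii\jjj}\|\geq e^{-Z}\|A_\iii\|\|A_\jjj\|$, multiplicativity of $|\det|$ is trivial, and $\|A^{-1}\|^{-1}=|\det A|/\|A\|$ transfers the same quasi-multiplicativity to the smaller singular value; the resulting constant depends only on $Z$ and $|s'(s)-q|$. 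This reduces the $k$-limit to $\tfrac 1n\log\sum_{\Sigma_n^\DD}\psi^{s'(s)-q}$, and finally the bijection $\Sigma_{n-2K}\ni\iii\mapsto\jjj_1(\iii)\iii\jjj_2(\iii)\in\Sigma_n^\DD$ together with submultiplicativity on the bounded length-$K$ prefixes and suffixes yields
$$
\tfrac{1}{n}P_{\DD,n}(\langle q-s'(s),\Psi_n\rangle)\ge\tfrac{n-2K}{n}\cdot\tfrac{1}{n-2K}\log\!\sum_{\iii\in\Sigma_{n-2K}}\!\!\psi^{s'(s)-q}(\iii)-\tfrac{C''}{n},
$$
which tends to $P(\log\psi^{s'(s)-q})$ as $n\to\infty$.

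The one genuine issue to watch is keeping all error constants uniform in $n$, $k$ and $(q,s)$ ranging over a given compactum. Every constant ultimately traces back to $Z$ from Lemma \ref{lem:dominbound} (uniform by construction of the dominated subsystems) and to $|s'(s)-q|$ (bounded on compacta), so uniformity is a matter of bookkeeping rather than a substantive obstacle. The conceptual novelty beyond Lemma \ref{lem:pressapprox} is exactly the use of Proposition \ref{thm:BGprop} to replace $\Psi_n$ by the true singular value potential at cost $O(1)$ rather than $O(n)$, which is what makes the two pressures line up in the limit.
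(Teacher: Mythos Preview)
Your proposal is correct and follows essentially the same route as the paper's proof: both use Proposition~\ref{thm:BGprop} (with the constant made uniform in $n$ via the fixed multicones $\BB\subset\CC^o$) to replace the Birkhoff sums of $\Psi_n$ by $\log\psi^{s'(s)-q}$ up to $O(1)$, obtain the upper bound by the inclusion $(\Sigma_n^\DD)^k\subset\Sigma_{nk}$, and obtain the lower bound by quasi-multiplicativity from Lemma~\ref{lem:dominbound} followed by the bijection $\Sigma_{n-2K}\to\Sigma_n^\DD$. Your write-up is in fact slightly more explicit than the paper's about why the constants stay uniform on compacta.
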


\begin{proof}
Since
\begin{align*}
P(\log\psi^{s'(s)-q})&=\lim_{m\to\infty}\tfrac{1}{m}\log\sum_{\iii\in\Sigma_m}\psi^{s'(s)-q}(\iii)\\
&=\lim_{k\to\infty}\tfrac{1}{nk}\log\sum_{\iii_1,\ldots,\iii_k\in\Sigma_{n}}\psi^{s'(s)-q}(\iii_1\cdots\iii_k)\\
&\geq\lim_{k\to\infty}\tfrac{1}{nk}\log\sum_{\iii_1,\ldots,\iii_k\in\Sigma_{n}^\DD}\psi^{s'(s)-q}(\iii_1\cdots\iii_k)\\
&=\lim_{k\to\infty}\tfrac{1}{nk}\log\sum_{\iii_1,\ldots,\iii_k\in\Sigma_{n}^\DD}\exp\biggl(\sup_{\jjj\in[\iii_1\cdots\iii_k]}\langle S_k^{(n)}\Psi_n(\jjj),q-s'(s)\rangle\biggr),
\end{align*}
we see that $P(\log\psi^{s'(s)-q})\geq\lim_{n\to\infty}\tfrac{1}{n}P_{\DD,n}(\langle q-s'(s),\Psi_n\rangle)$. On the other hand, by using the uniform domination of $(A_{\iii})_{\iii\in\Sigma_n^\DD}$ for every $n\in\N$, we have
\begin{align*}
\tfrac1n &P_{\DD,n}(\langle q-s'(s),\Psi_n\rangle)\\
&=\lim_{k\to\infty}\tfrac{1}{nk}\log\sum_{\iii_1,\ldots,\iii_k\in\Sigma_{n}^\DD}\exp\biggl(\sup_{\jjj\in[\iii_1\cdots\iii_k]}\langle S_k^{(n)}\Psi_n(\jjj),q-s'(s)\rangle\biggr)\\
&\geq\lim_{k\to\infty}\tfrac{1}{nk}\log\sum_{\iii_1,\ldots,\iii_k\in\Sigma_{n}^\DD}\exp\biggl(\sum_{\ell=1}^{k}\langle q-s'(s),-\log\psi^{(1,1)}(\iii_\ell)\rangle+C k\biggr)\\
&=\tfrac{1}{n}\log\sum_{\iii\in\Sigma_{n}^\DD}\exp\biggl(\langle q-s'(s),-\log\psi^{(1,1)}(\iii)\rangle\biggr)+\frac{C}{n}\\
&\geq\tfrac{1}{n}\log\sum_{\iii\in\Sigma_{n-2K}}\psi^{s'(s)-q}(\iii)+\frac{C'(K)}{n}.
\end{align*}
The statement follows by taking $n\to\infty$.
\end{proof}

We are now able to show the following result which is a stepping stone to Theorem \ref{thm:mainlyapunovint}.

\begin{theorem}\label{steppingstone}
Let $(A_1+v_1,\ldots,A_N+v_N)$ be an affine IFS on $\R^2$ satisfying the SOSC. If $(A_1,\ldots,A_N) \in GL_2(\R)^N$ is strongly irreducible such that the generated subgroup of the normalized matrices is non-compact, then
\begin{align*}
  \dimh(\pi E_\chi(\alpha)) &= \sup\{\diml(\mu) :\mu\in\MM_\sigma(\Sigma)\text{ and }\chi(\mu)=\alpha\} \\
  &= \sup\{\diml(\mu) : \mu\in\EE_\sigma(\Sigma)\text{ and }\chi(\mu)=\alpha\} \\
  &= \sup\{s\geq 0:\inf_{q\in\R^2}\{P(\log\psi^{s'(s)-q})-\langle q,\alpha\rangle\}\geq 0\} \\
  &=\min\biggl\{\frac{h_\mathrm{top}(E_\chi(\alpha))}{\alpha_1},1+\frac{h_\mathrm{top}(E_\chi(\alpha))-\alpha_1}{\alpha_2}\biggr\}
\end{align*}
for all $\alpha=(\alpha_1,\alpha_2)\in\PP(\chi)^o \subset \R^2$.
\end{theorem}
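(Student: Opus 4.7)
The plan is to imitate the proof of Theorem \ref{thm:mainbirkhoffint} with Proposition \ref{cor:lyapdomin} taking the place of Theorem \ref{thm:domibirkhoff} and Lemma \ref{lem:pressapproxlyap} taking the place of Lemma \ref{lem:pressapprox}. The upper bound
$$
\dimh(\pi E_\chi(\alpha))\leq s_0(\alpha):=\sup\{s\geq 0:\inf_{q\in\R^2}\{P(\log\psi^{s'(s)-q})-\langle q,\alpha\rangle\}\geq 0\}
$$
is already supplied by Proposition \ref{thm:lyapupper}, so the work lies in the matching lower bound plus the two side-identifications.

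For the lower bound I would pass to the dominated subsystems $\Sigma_n^{\DD}$ of \eqref{eq:dominsub}. For $n>2K$ the tuple $(A_\kkk)_{\kkk\in\Sigma_n^{\DD}}$ is dominated by construction and inherits strong irreducibility, non-compactness of the normalised generated subgroup (the hyperbolic anchors $A_{\kkk_1},A_{\kkk_2}$ built in Lemma \ref{lem:consdomi} sit inside the subsystem), and the SOSC. Hence Proposition \ref{cor:lyapdomin} applies to the sub-IFS. The elementary estimate $\|A_{\iii|_m}\|\asymp\|A_{\iii|_{n\lfloor m/n\rfloor}}\|$, with constants depending only on $\max_i\max\{\|A_i\|,\|A_i^{-1}\|\}^n$, shows that a point $\jjj\in(\Sigma_n^{\DD})^{\N}$ with sub-IFS exponents $n\alpha$ (with respect to $\sigma^n$) has original exponents $\alpha$ (with respect to $\sigma$). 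Thus $\pi E_\chi(\alpha)$ contains the image of the subsystem level set $E_\chi^{\DD}(n\alpha)$, whose Hausdorff dimension Proposition \ref{cor:lyapdomin} identifies with
$$
s_n(\alpha):=\sup\{s\geq 0:\inf_{q\in\R^2}\{\tfrac{1}{n}P_{\DD,n}(\langle q-s'(s),\Psi_n\rangle)-\langle q,\alpha\rangle\}\geq 0\}.
$$
Lemma \ref{lem:pressapproxlyap} gives $\lim_n s_n(\alpha)=s_0(\alpha)$ uniformly on compact $(q,s)$-sets, so sandwiching $s_n(\alpha)\leq\dimh(\pi E_\chi(\alpha))\leq s_0(\alpha)$ and letting $n\to\infty$ yields the equality $\dimh(\pi E_\chi(\alpha))=s_0(\alpha)$. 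The ergodic-sup equality follows because each $s_n(\alpha)$ is realised by an $n$-step Bernoulli measure on the sub-IFS (Proposition \ref{prop:approx2} applied inside the dominated subsystem), which, symmetrised through \eqref{nbern}, is $\sigma$-ergodic of identical Lyapunov dimension; conversely $\sup\diml(\mu)\leq s_0(\alpha)$ is immediate from the sub-additive variational principle \cite{CaoFengHuang,Kaenmaki2004} applied to the potential $\log\psi^{s'(s)-q}$.

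The final topological-entropy identity is harvested straight from the calculation inside the proof of Proposition \ref{cor:lyapdomin}, which used only the inequality on the Birkhoff-type side and so does not depend on domination: testing the pressure with $s'\in\{(s,0),(1,s-1),(s/2,s/2)\}$ gives
$$
\inf_{q\in\R^2}\{P(\log\psi^{s'(s)-q})-\langle q,\alpha\rangle\}\geq h_\mathrm{top}(E_\chi(\alpha))-\langle s'(s),\alpha\rangle,
$$
so the min-expression is $\leq s_0(\alpha)$; the reverse $\diml(\mu)\leq\min\{\cdots\}$ is trivial for any $\mu$ with $\chi(\mu)=\alpha$, and Kingman plus Bowen \cite{Bowennotbook} give $h_\mathrm{top}(E_\chi(\alpha))\geq\sup\{h(\mu):\chi(\mu)=\alpha\}$, which closes the chain. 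The main obstacle I anticipate is controlling the infimum in $q$: Lemma \ref{lem:pressapproxlyap} only converges uniformly on compact $q$-sets, so one must show a priori that, because $\alpha\in\PP(\chi)^{o}$, both infima are attained on a common compact ball. This is done as in Proposition \ref{cont}: the variational principle forces $P(\log\psi^{s'(s)-q})-\langle q,\alpha\rangle\to\infty$ whenever $|q|\to\infty$, since $\alpha$ lies strictly inside the convex set of realisable Lyapunov vectors and hence cannot be balanced by the linear term $\langle q,\cdot\rangle$ for arbitrarily large $q$. Once this localisation is in place, the sandwich argument goes through verbatim.
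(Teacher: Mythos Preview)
Your proposal is correct and follows essentially the same route as the paper: the upper bound from Proposition~\ref{thm:lyapupper}, the lower bound by passing to the dominated subsystems $\Sigma_n^{\DD}$ and invoking Proposition~\ref{cor:lyapdomin}, and the sandwich closed via Lemma~\ref{lem:pressapproxlyap}; the measure-sup and the topological-entropy identities are then read off exactly as you describe. One small refinement: the calculation inside Proposition~\ref{cor:lyapdomin} is written for the H\"older potential $\Psi$ built from the Oseledets direction $V(\cdot)$, which only exists under domination, so ``harvesting it straight'' is not literally possible on the full system---the paper redoes the identical estimate with the matrix-norm vector $-\log\psi^{(1,1)}(\iii)=(-\log\|A_\iii\|,\log\|A_\iii^{-1}\|)$ in place of $S_n\Psi$, which is the substitution you should make explicit.
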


\begin{proof}
Write $\Sigma_n(\alpha,\varepsilon)=\{\iii\in\Sigma_n:|-\tfrac{1}{n}\log\psi^{(1,1)}(\iii)-\alpha|<\varepsilon\}$. Observe that
\begin{align*}
  P(\log\psi^{s'-q}) &\geq \liminf_{n\to\infty}\tfrac{1}{n}\log\sum_{\iii\in\Sigma_n(\alpha,\varepsilon)}\psi^{s'-q}(\iii)\\
   &\geq -\langle s',\alpha\rangle-C(|s'|+|q|)\varepsilon+\liminf_{n\to\infty}\tfrac{1}{n}\log\#\Sigma_n(\alpha,\varepsilon)
\end{align*}
for all $s',q \in \R^2$.
Recalling the definition of the topological entropy, as $\varepsilon>0$ is arbitrary, we get that
$$
\inf_{q\in\R^2}P(\log\psi^{s'(s)-q})\geq
\begin{cases}
h_\mathrm{top}(E_\chi(\alpha))-s\alpha_1, & \text{ if }0 \leq s < 1, \\
h_\mathrm{top}(E_\chi(\alpha))-\alpha_1-(s-1)\alpha_2, & \text{ if }1 \leq s < 2, \\
h_\mathrm{top}(E_\chi(\alpha))-(\alpha_1+\alpha_2)s/2, & \text{ if }2 \leq s < \infty.
\end{cases}
$$
Thus,
\begin{align*}
\min\biggl\{\frac{h_\mathrm{top}(E_\chi(\alpha))}{\alpha_1},&1+\frac{h_\mathrm{top}(E_\chi(\alpha))-\alpha_1}{\alpha_2}\biggr\} \\
&\leq \sup\{s\geq 0:\inf_{q\in\R^2}\{P(\log\psi^{s'(s)-q})-\langle q,\alpha\rangle\}\geq 0\}.
\end{align*}
We clearly have $E_\chi(\alpha)\supset E_\chi^{\DD,n}(\alpha)$ and $h_\mathrm{top}(E_\chi(\alpha),\sigma)\geq \tfrac{1}{n}h_\mathrm{top}(E_\chi^{\DD,n}(\alpha),\sigma^n)$, where
$$
E_\chi^{\DD,n}(\alpha)=\{\iii\in(\Sigma_n^\DD)^\N:\lim_{k\to\infty}\tfrac{1}{n}S_k^{(n)}\Phi_n(\iii)=\alpha\}.
$$
Observe that, by Proposition~\ref{cor:lyapdomin} and Theorem~\ref{thm:domibirkhoff},
\begin{align*}
 \dimh(\pi E_\chi^{\DD,n}(\alpha))&=\sup\{\diml(\mu'):\mu'\in\MM_{\sigma^n}((\Sigma_n^\DD)^\N)\text{ and }\chi(\mu',\sigma^n)=n\alpha\}\\
 &=\sup\{ s \ge 0 : \inf_{q \in \R^2}(P_{\DD,n}(\langle q-s'(s),\Psi_n\rangle)-\langle q,n\alpha \rangle)\geq 0 \} \\
 &=\min\biggl\{\frac{h_\mathrm{top}(E_\chi^{\DD,n}(\alpha),\sigma^n)}{n\alpha_1},1+\frac{h_\mathrm{top}(E_\chi^{\DD,n}(\alpha),\sigma^n)-n\alpha_1}{n\alpha_2}\biggr\},
\end{align*}
and, by Lemma~\ref{lem:pressapproxlyap},
\begin{align*}
\sup\{ s \ge 0 : \inf_{q \in \R^2}&(P_{\DD,n}(\langle q-s'(s),\Psi_n\rangle)-\langle q,n\alpha \rangle)\geq 0 \} \\
&\to \sup\{s\geq 0:\inf_{q\in\R^2}\{P(\log\psi^{s'(s)-q})-\langle q,\alpha\rangle\}\geq 0\}
\end{align*}
as $n\to\infty$. Finally, we note that for any $\mu'\in\MM_{\sigma^n}((\Sigma_n^\DD)^\N)$ there exists $\mu\in\MM_{\sigma}(\Sigma)$ such that $\mu=\frac{1}{n}\sum_{k=0}^{n-1}\mu' \circ \sigma^{-k}$, $n\chi(\mu,\sigma)=\chi(\mu',\sigma^n)$, and $\diml(\mu)=\diml(\mu')$. This and the variational principle argument used in the proof of Theorem \ref{thm:mainbirkhoffint} complete the proof.
\end{proof}

\section{Boundaries}\label{sec:lyapdiag}

To complete the proof of Theorem \ref{thm:mainlyapunovint}, we need to show that $\PP(\chi)$ is closed and convex, and the continuity of the spectrum.

\begin{proposition} \label{prop:wmeasbirk}
Let $\alpha_k\to \alpha$ be a converging sequence of points in $\PP(\Phi)$ and, for each $k$, let $\nu_k$ be an ergodic measure so that $\nu_k(E_\Phi(\alpha_k))=1$. Then $h_{\rm top}(E_\Phi(\alpha)) \geq \limsup_{k\to\infty} h(\nu_k)$.
\end{proposition}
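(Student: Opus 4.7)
The plan is to combine the formula \eqref{entropy} for $h_{\rm top}(E_\Phi(\alpha))$ with the Shannon--McMillan--Breiman theorem and Birkhoff's ergodic theorem applied to each $\nu_k$. Set $H=\limsup_{k\to\infty} h(\nu_k)$. By \eqref{entropy}, it suffices to prove that for every $\eps>0$,
\[
\liminf_{n\to\infty}\tfrac{1}{n}\log\#\{\iii|_n\in\Sigma_n : |\alpha-\tfrac{1}{n}S_n\Phi(\iii)|<\varepsilon\} \;\geq\; H,
\]
and then let $\eps\downarrow 0$.

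Fix $\eps>0$ and $\delta\in(0,\eps/4)$. Along a subsequence realizing $H$, and using $\alpha_k\to\alpha$, I would select a single index $k$ with $|\alpha_k-\alpha|<\eps/2$ and $h(\nu_k)>H-\delta$. Since $\nu_k\in\EE_\sigma(\Sigma)$ is ergodic with $\nu_k(E_\Phi(\alpha_k))=1$, Birkhoff's theorem gives $\tfrac1n S_n\Phi(\iii)\to\alpha_k$ for $\nu_k$-a.e.\ $\iii$, and Shannon--McMillan--Breiman gives $-\tfrac{1}{n}\log\nu_k([\iii|_n])\to h(\nu_k)$ for $\nu_k$-a.e.\ $\iii$. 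By Egorov (or simply by convergence in measure), for all sufficiently large $n$ the set
\[
G_n=\{\iii\in\Sigma : |\tfrac1n S_n\Phi(\iii)-\alpha_k|<\delta \text{ and } \nu_k([\iii|_n])\leq e^{-n(h(\nu_k)-\delta)}\}
\]
satisfies $\nu_k(G_n)\geq \tfrac12$.

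Let $\mathcal{A}_n=\{\iii|_n : \iii\in G_n\}$. Since each cylinder in $\mathcal{A}_n$ has $\nu_k$-measure at most $e^{-n(h(\nu_k)-\delta)}$ and together they cover $G_n$, I get the lower bound
\[
\#\mathcal{A}_n \;\geq\; \tfrac{1}{2}\, e^{n(h(\nu_k)-\delta)} \;\geq\; \tfrac{1}{2}\, e^{n(H-2\delta)}.
\]
For each cylinder in $\mathcal{A}_n$, picking any $\iii\in G_n\cap [\iii|_n]$ and using the triangle inequality yields $|\alpha-\tfrac1n S_n\Phi(\iii)|<\eps/2+\delta<\eps$. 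To ensure the count fits the formula \eqref{entropy} regardless of which extension is used, I would invoke Lemma \ref{lem:variance} to replace, for $n$ large, $\tfrac1n S_n\Phi$ by a quantity constant on $n$-cylinders up to an error less than $\delta$; this shows $\mathcal{A}_n\subseteq \{\iii|_n:|\alpha-\tfrac1n S_n\Phi(\iii)|<\eps\}$ for $n$ large and any choice of representative.

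Taking $\liminf_{n\to\infty}\tfrac1n\log$ of the above cardinality bound gives $\geq H-2\delta$, and then letting $\delta\downarrow 0$ followed by $\eps\downarrow 0$ yields $h_{\rm top}(E_\Phi(\alpha))\geq H$. The whole argument is essentially bookkeeping; the only mild subtlety is the diagonal choice of $k$ (depending on both $\eps$ and $\delta$) together with $n$ large enough for the SMB/Birkhoff convergence for that fixed $\nu_k$ and for Lemma \ref{lem:variance} to apply, but since we only need a $\liminf$ in $n$ after $k$ is chosen, this ordering is unproblematic.
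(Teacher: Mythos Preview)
Your argument is correct and in fact more direct than the paper's. You exploit the Takens--Verbitskiy counting formula \eqref{entropy} together with a single application of Shannon--McMillan--Breiman and Birkhoff to one well-chosen $\nu_k$; this immediately produces enough distinct $n$-cylinders with Birkhoff average $\eps$-close to $\alpha$, and the $\liminf$ in $n$ (rather than a $\limsup$) causes no trouble since the single fixed $\nu_k$ supplies good cylinders for all large $n$.

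The paper proceeds differently: it concatenates blocks drawn from the $\nu_k$'s along a rapidly increasing sequence $(m_k)$ to build an auxiliary measure $\nu$, shows that a positive $\nu$-mass sits on $E_\Phi(\alpha)$ with cylinder decay rate at least $\limsup_k h(\nu_k)$, and then invokes \cite[Theorem~3.6]{TV}. Your route is shorter and avoids the block-concatenation bookkeeping. The reason the paper adopts the more laborious construction is that the same template must be reused for Proposition~\ref{prop:wmeaslyap}, where the Lyapunov exponents are genuinely subadditive and no analogue of \eqref{entropy} is available; there one really needs to manufacture points in $E_\chi(\alpha)$ by hand, and the concatenation argument (augmented by Lemma~\ref{lem:dominbound}) is what survives. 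So your simplification is sound for the additive Birkhoff case but would not transfer to the Lyapunov setting.
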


\begin{proof}
Consider a fast increasing sequence of integers $(m_k)$. We define a measure $\nu$ by setting on level $m_\ell$ cylinders $\nu([\iii_1\cdots\iii_\ell]) = \nu_1([\iii_1])\cdots\nu_\ell([\iii_\ell])$ for all $\iii_1 \in \Sigma_{m_1}$ and $\iii_k \in \Sigma_{m_k-m_{k-1}}$ for all $k \in \{2,\ldots,\ell\}$. Write $s=\limsup_{k\to\infty} h(\nu_k)$. We claim that if $(m_k)$ grows quickly enough, then for all $t<s$ and for some $C>0$,
$$
\nu(\{\iii\in E_{\Phi}(\alpha):\nu([\iii|_n])\leq C e^{-nt}\})>0.
$$
From this it follows, by Takens and Verbitskiy \cite[Theorem 3.6]{TV}, that $h_{\text{top}}(E_{\Phi}(\alpha))\geq s$.
The proof of the claim is virtually identical as (in fact, simpler than) the proof of \cite[Proposition 9]{GelRam:09}, but as there exist formal differences (instead of ergodic measures, the proof there was given for Gibbs measures) we will sketch the proof.

The main ingredient is the following statement: Let $\mu$ be an ergodic measure and $\Phi=(\Phi_1,\ldots,\Phi_M) \colon \Sigma \to \R^M$ be a continuous potential. Then for every $\varepsilon>0$ there exists $L>0$ such that for any $n>0$ the union of cylinders $[\iii]$, where each $\iii\in\Sigma_n$ satisfies
\begin{equation} \label{eqn:wmeas1}
  \ell\biggl(-\varepsilon + \int_\Sigma \Phi_k \dd\mu\biggr) - L \leq S_\ell \Phi_k(\jjj) \leq \ell\biggl(\varepsilon + \int_\Sigma \Phi_k \dd\mu\biggr) + L
\end{equation}
for all $\jjj\in [\iii]$, $\ell \in \{1,\ldots,n\}$, and $k\in\{1,\ldots,M\}$, and
\begin{equation} \label{eqn:wmeas2}
  L^{-1} e^{-\ell(h(\mu)+\varepsilon)} \leq \mu([\iii]) \leq Le^{-\ell(h(\mu)-\varepsilon)}
\end{equation}
for all $\ell \in \{1,\ldots,n\}$, has $\mu$-measure at least $1-\eps$. This follows from Birkhoff and Shannon-McMillan-Breiman Theorems together with Egorov Theorem.

Let $(\varepsilon_k)_{k \in \N}$ be a sequence such that $\varepsilon_k\downarrow 0$ as $k\to\infty$ and $\prod_{i=1}^{\infty} (1-\varepsilon_i)>0$. We apply the above statement to each $\nu_k$ with the corresponding $\varepsilon_k$. Then, for every $\ell \in \{m_k+1,\ldots, m_{k+1}\}$, we have
\begin{align*}
  S_\ell \Phi(\jjj) &= m_k \int_\Sigma \Phi \dd\nu_k + (\ell - m_k) \int_\Sigma \Phi \dd\nu_{k+1} \\
  &\qquad+ O(m_{k-1}, L(\nu_{k+1},\varepsilon_{k+1}), \varepsilon_k (m_k-m_{k-1}), \varepsilon_{k+1} (\ell - m_k))
\end{align*}
for all $\jjj \in [\iii]$ and
\begin{align*}
  -\log \nu([\iii]) &= m_k h(\nu_k) + (\ell-m_k) h(\nu_{k+1}) \\
  &\qquad+ O(m_{k-1}, L(\nu_{k+1},\varepsilon_{k+1}), \varepsilon_k (m_k-m_{k-1}), \varepsilon_{k+1} (\ell - m_k))
\end{align*}
for cylinders $[\iii]$ containing in a set of $\nu$-measure at least $\prod_{i=1}^{k+1} (1-\varepsilon_i)$. Thus, for $(m_k)$ growing sufficiently fast, we get the claim (using \cite[Theorem 3.6]{TV} for the entropy part of the claim).
\end{proof}

A similar statement holds for Lyapunov exponents.

\begin{proposition} \label{prop:wmeaslyap}
Let $\alpha_k\to \alpha$ be a converging sequence of points in $\PP(\chi)$ and, for each $k$, let $\mu_k$ be an ergodic measure so that $\mu_k(E_\chi(\alpha_k))=1$. Then $h_{\rm top}(E_\chi(\alpha)) \geq \limsup_{k\to\infty} h(\mu_k)$.
\end{proposition}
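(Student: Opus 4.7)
The plan is to mirror the proof of Proposition~\ref{prop:wmeasbirk}, with two adaptations: Kingman's subadditive ergodic theorem replaces Birkhoff's theorem for controlling operator norms, and the uniform near-multiplicativity supplied by Lemmas~\ref{lem:consdomi} and~\ref{lem:dominbound} recovers the additivity that Birkhoff sums enjoy for free. The first step is the Lyapunov analog of the ``main ingredient'' used in the proof of Proposition~\ref{prop:wmeasbirk}: for any ergodic $\mu$ and $\varepsilon>0$, there exists $L>0$ such that, for every $n>0$, the union of cylinders $[\iii]$ with $\iii\in\Sigma_n$ satisfying
\begin{align*}
\bigl|-\log\|A_{\iii|_\ell}\| - \ell\chi_1(\mu)\bigr| &\leq \varepsilon\ell + L,\\
\bigl|\log\|A_{\iii|_\ell}^{-1}\| - \ell\chi_2(\mu)\bigr| &\leq \varepsilon\ell + L,\\
L^{-1}e^{-\ell(h(\mu)+\varepsilon)} \leq \mu([\iii|_\ell]) &\leq Le^{-\ell(h(\mu)-\varepsilon)}
\end{align*}
for all $\ell\in\{1,\ldots,n\}$ has $\mu$-measure at least $1-\varepsilon$; this follows from Kingman's theorem applied both to $\log\|A_{\iii|_n}\|$ and to $\log\|A_{\iii|_n}^{-1}\|$, the Shannon--McMillan--Breiman theorem, and Egorov.

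Next I would choose $\varepsilon_k\downarrow 0$ with $\prod_k(1-\varepsilon_k)>0$, a rapidly growing sequence $(m_k)$, and for each $k$ use Lemma~\ref{lem:consdomi} to fix padding words $\jjj^1_k,\jjj^2_k$ of uniformly bounded length $K$ such that, for each admissible $\iii_k$ of length $m_k-m_{k-1}$, the padded matrix $A_{\jjj^1_k\iii_k\jjj^2_k}$ sends the multicone $\CC$ into $\BB^o$ for fixed multicones $\BB\subset\CC^o$. The measure $\nu$ is then defined on the corresponding level-$M_k$ cylinders (with $M_k$ the cumulative length of the first $k$ padded blocks) by $\nu([\jjj^1_1\iii_1\jjj^2_1\cdots\jjj^1_k\iii_k\jjj^2_k])=\prod_{j\leq k}\mu_j([\iii_j])$, with each $\iii_j$ restricted to the $\mu_j$-good set. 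Since $K$ is fixed while $m_k\to\infty$ rapidly, $M_k/m_k\to 1$, and the Shannon--McMillan--Breiman part of the main ingredient gives the entropy estimate $-\log\nu([\iii|_n])/n\geq\limsup_k h(\mu_k)-O(\varepsilon)$ on the good product set $G$, with $\nu(G)\geq\prod_k(1-\varepsilon_k)>0$, exactly as in the proof of Proposition~\ref{prop:wmeasbirk}.

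The Lyapunov control on $G$ combines three ingredients. Submultiplicativity of $\|\cdot\|$ together with the first bound of the main ingredient gives $\|A_{\iii|_n}\|\leq e^{-n\alpha_1+o(n)}$, whence $\liminf_n(-\tfrac1n\log\|A_{\iii|_n}\|)\geq\alpha_1$. Dually, since the chain of padded blocks satisfies the hypothesis of Lemma~\ref{lem:dominbound} for every adjacent pair (the dominated-cone property is preserved along the chain, because $\BB\subset\CC^o$), iteration yields $\|A_{\iii|_n}\|\geq e^{-(k-1)Z}\prod_j\|A_{\jjj^1_j\iii_j\jjj^2_j}\|\geq e^{-n\alpha_1-o(n)}$, and hence $\limsup_n(-\tfrac1n\log\|A_{\iii|_n}\|)\leq\alpha_1$, so that $\chi_1(\iii)=\alpha_1$ on $G$. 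Then the determinant identity $\chi_1(\iii|_n)+\chi_2(\iii|_n)=-\tfrac1n S_n\log|\det A_{i_1}|(\iii)$ is an additive Birkhoff sum of the continuous potential $\iii\mapsto\log|\det A_{i_1}|$, so the standard Birkhoff ingredient gives $\chi_1(\iii)+\chi_2(\iii)=\alpha_1+\alpha_2$ on $G$, forcing $\chi_2(\iii)=\alpha_2$. Consequently $\nu(G\cap E_\chi(\alpha))>0$, and Takens--Verbitskiy~\cite[Theorem~3.6]{TV} yields $h_\mathrm{top}(E_\chi(\alpha))\geq\limsup_k h(\mu_k)-O(\varepsilon)$; sending $\varepsilon\downarrow 0$ completes the proof. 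The main obstacle is the upper bound on $\chi_1(\iii)$: without the padding, submultiplicativity gives only one-sided bounds on each of $\|A_{\iii|_n}\|$ and $\|A_{\iii|_n}^{-1}\|$, and combined with the exact determinant identity they only confine the limit points of $(\chi_1(\iii|_n),\chi_2(\iii|_n))$ to the segment from $(\alpha_1,\alpha_2)$ to $((\alpha_1+\alpha_2)/2,(\alpha_1+\alpha_2)/2)$; the near-multiplicativity of Lemma~\ref{lem:dominbound}, secured by the dominated-cone padding of Lemma~\ref{lem:consdomi}, is essential to close this gap.
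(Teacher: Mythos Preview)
Your overall strategy is sound and closely parallels the paper's, but there is a genuine gap in the step where you infer $\limsup_n(-\tfrac1n\log\|A_{\iii|_n}\|)\leq\alpha_1$. The iteration of Lemma~\ref{lem:dominbound} gives the lower bound $\|A_{\iii|_n}\|\geq e^{-(k-1)Z}\prod_{j\leq k}\|A_{\jjj^1_j\iii_j\jjj^2_j}\|$ only at the block boundaries $n=M_k$; controlling the $\limsup$ requires every $n$, in particular those with $M_{k-1}<n<M_k$. At such $n$ one has $A_{\iii|_n}=A_{\iii|_{M_{k-1}}}\cdot B$ with $B=A_{\jjj^1_k}A_{\iii_k|_\ell}$ a \emph{partial} padded block which carries no cone condition, so Lemma~\ref{lem:dominbound} does not apply to this product. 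Since $(m_k)$ must grow fast enough that $m_{k-1}/m_k\to 0$ (otherwise the entropy estimate fails), the partial block has length comparable to $n$ itself and cannot be absorbed as $o(n)$. Neither the determinant identity nor submultiplicativity of $\|\cdot\|$ and $\|\cdot^{-1}\|$ rescues this: as you yourself note, those ingredients only constrain the limit points to the segment from $(\alpha_1,\alpha_2)$ to $((\alpha_1+\alpha_2)/2,(\alpha_1+\alpha_2)/2)$, and this obstruction is present precisely at the intermediate times.

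The paper's proof closes this gap by an additional approximation step (Lemma~\ref{lem:skeleton} and Corollary~\ref{cor:skel}): each $\mu_k$ is first replaced by an $n_k$-step Bernoulli measure $\nu_k$ supported on words of \emph{fixed short} length $n_k$ that already map $\CC$ into $\BB^o$ and have controlled singular values, with $h(\nu_k)\geq h(\mu_k)-\varepsilon_k$. The $w$-measure is then built from the $\nu_k$, so that within the $k$-th interval $[m_{k-1},m_k]$ the sequence is a concatenation of many short dominated $n_k$-blocks rather than one long padded block. At any intermediate time the only undominated remainder has length at most $n_k$, contributing an error $O(n_k\log H)=o(n)$, while Lemma~\ref{lem:dominbound} applies across every pair of adjacent full blocks (including across the interval boundaries). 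Your padding idea is the right one, but it must be applied at this finer scale---once per short sub-block, not once per $w$-measure interval---and this is exactly what the passage through Corollary~\ref{cor:skel} achieves.
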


The main obstacle in repeating the proof of the previous proposition is that the singular value is not multiplicative. We can, however, use Lemma \ref{lem:consdomi} to transfer the argument to a dominated cocycle setting, where the singular values are almost multiplicative and the same argument as in Proposition \ref{prop:wmeasbirk} will work.

To prove Proposition \ref{prop:wmeaslyap}, we begin with an approximation argument.

\begin{lemma} \label{lem:skeleton}
If $\mu$ is an ergodic measure supported on $E_\chi(\alpha)$, then for every $\eps>0$ there exist arbitrarily large $n\in\N$ such that the set
\begin{align*}
  \Omega_n(\alpha,\varepsilon)=\{\iii|_n\in\Sigma_n : \;&|-\tfrac1n \log \norm{A_{\iii}}-\alpha_1|<\varepsilon, \\ &|-\tfrac1n \log \norm{A_{\iii}^{-1}}^{-1}-\alpha_2|<\varepsilon, \text{ and }A_{\iii}\CC\subset \BB^o\},
\end{align*}
where $\BB$ and $\CC$ are multicones defined in Lemma \ref{lem:consdomi}, has at least $e^{n(h(\mu)-\eps)}$ elements.
\end{lemma}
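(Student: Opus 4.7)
The plan is to combine standard ergodic-theoretic typicality (Shannon--McMillan--Breiman, Kingman's subadditive ergodic theorem, and Egorov's theorem) with the prefix/suffix insertion provided by Lemma~\ref{lem:consdomi}. Since conditions on the norm and the coexpansion are unaffected by prepending/appending a bounded block, the strategy is to first find a large collection of \emph{typical} words of length $n-2K$ satisfying the norm/coexpansion conditions, and then use Lemma~\ref{lem:consdomi} to wrap each such word in a pair $\jjj_1\in\Sigma_K$, $\jjj_2\in\Sigma_K$ that forces $A_{\jjj_1\iii\jjj_2}\CC\subset\BB^o$.

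First I would fix $\delta<\eps/2$ and apply Shannon--McMillan--Breiman together with Kingman's subadditive ergodic theorem to the three subadditive cocycles $\log\mu([\iii|_n])$, $\log\|A_{\iii|_n}\|$, $-\log\|A_{\iii|_n}^{-1}\|^{-1}$, noting that $\chi(\mu)=\alpha$ by hypothesis. Egorov's theorem then supplies an $N_0$ and, for every $n\geq N_0$, a set $G_n\subset\Sigma_{n-2K}$ such that $\sum_{\iii\in G_n}\mu([\iii])\geq \tfrac12$, with each $\iii\in G_n$ satisfying $\mu([\iii])\leq e^{-(n-2K)(h(\mu)-\delta)}$ and
\[
|\tfrac1{n-2K}\log\|A_{\iii}\|+\alpha_1|<\delta, \qquad |\tfrac1{n-2K}\log\|A_{\iii}^{-1}\|^{-1}+\alpha_2|<\delta.
\]
A straightforward counting argument then gives $\#G_n\geq\tfrac12 e^{(n-2K)(h(\mu)-\delta)}$.

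Next, for each $\iii\in G_n$ apply Lemma~\ref{lem:consdomi} to select $\jjj_1=\jjj_1(\iii)$ and $\jjj_2=\jjj_2(\iii)$ in $\Sigma_K$ with $A_{\jjj_1\iii\jjj_2}\CC\subset\BB^o$. The map $\iii\mapsto\jjj_1(\iii)\iii\jjj_2(\iii)$ is clearly injective into $\Sigma_n$ since the middle $n-2K$ letters of the image recover $\iii$. To transport the norm estimates, use $A_{\iii}=A_{\jjj_1}^{-1}A_{\jjj_1\iii\jjj_2}A_{\jjj_2}^{-1}$ together with the uniform bound $C_0:=\max_i\max\{\log\|A_i\|,\log\|A_i^{-1}\|\}$, which yields
\[
|\log\|A_{\jjj_1\iii\jjj_2}\|-\log\|A_{\iii}\||\leq 2KC_0
\]
and an analogous bound for $\log\|A_{\jjj_1\iii\jjj_2}^{-1}\|^{-1}$. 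Thus
\[
\bigl|\tfrac1n\log\|A_{\jjj_1\iii\jjj_2}\|+\alpha_1\bigr|\leq \tfrac{n-2K}{n}\delta+\tfrac{2KC_0+2K|\alpha_1|}{n}<\eps
\]
for $n$ large enough, and likewise for the coexpansion. Therefore $\jjj_1(\iii)\iii\jjj_2(\iii)\in\Omega_n(\alpha,\eps)$, and $\#\Omega_n(\alpha,\eps)\geq\#G_n\geq\tfrac12 e^{(n-2K)(h(\mu)-\delta)}\geq e^{n(h(\mu)-\eps)}$ for all sufficiently large $n$, establishing the lemma for arbitrarily large $n$.

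There is no substantial obstacle: each ingredient is either a classical ergodic theorem or the already-proven structural Lemma~\ref{lem:consdomi}. The only bookkeeping concern is to choose $\delta$ small enough and $n$ large enough so that the additive $O(K)$ distortion of the log-norms introduced by the prefix/suffix and the multiplicative loss of $\tfrac12$ in the counting are all absorbed into the single target error $\eps$; this is easily arranged since $K$ is a fixed constant from Lemma~\ref{lem:consdomi}.
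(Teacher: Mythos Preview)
Your proposal is correct and follows essentially the same route as the paper: first count at least $e^{(n-2K)(h(\mu)-\delta)}$ ``typical'' words in $\Sigma_{n-2K}$ via entropy/ergodic theorems, then wrap each with the prefix/suffix from Lemma~\ref{lem:consdomi} and absorb the $O(K)$ distortion of the log-norms. The only stylistic difference is that the paper obtains the count by a short contradiction argument (if too few typical cylinders existed at every large level one would contradict $h(\mu)$), whereas you invoke Shannon--McMillan--Breiman plus Egorov explicitly; your version in fact yields the conclusion for \emph{all} sufficiently large $n$, which is slightly stronger than the ``arbitrarily large $n$'' stated.
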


\begin{proof}
Consider first the set $\Sigma_{n-2K}(\alpha,\varepsilon/2)$, where $K$ is as in Lemma \ref{lem:consdomi} and $\Sigma_n(\alpha,\varepsilon)=\{\iii\in\Sigma:|\alpha-\tfrac{1}{n}S_n\Psi(\iii)|<\varepsilon\}$, and $\Psi \colon \Sigma \to \R^2$ is as in \eqref{eq:Psi-def}. For every $\varepsilon>0$, there exist arbitrarily large $n\in\N$ such that this set has at least $e^{(n-2K)(h(\mu)-\varepsilon)}$ elements. Indeed, if this statement was not true, then for some $N \in \N$ we could cover all the points $\iii \in \Sigma$ with $(\chi_1(\iii),\chi_2(\iii)) = \alpha$ (that is, $\mu$-almost every point) with a collection of cylinders containing $e^{n(h(\mu)-\varepsilon)}$ cylinders of level $n$ for all $n>N$. This would imply that $h(\mu)\leq h(\mu)-\varepsilon$.

Now, Lemma \ref{lem:consdomi} lets us find for every word $\iii\in\Sigma_{n-2K}(\alpha,\varepsilon/2)$ a prefix $\jjj_1$ and a suffix $\jjj_2$ such that $A_{\jjj_1\iii\jjj_2}\CC\subset\BB^o$. At the same time,
\[
\norm{A_{\iii}} H^{-2K} \leq \norm{A_{\jjj_1\iii\jjj_2}} \leq \norm{A_{\iii}} H^{2K}
\]
and similarly for $\norm{A_{\jjj_1\iii\jjj_2}^{-1}}^{-1}$, where $H=\max_i \max\{\norm{A_i}, \norm{A_i^{-1}}^{-1}\}$. Hence, for $n$ large enough, if $\iii\in\Sigma_{n-2K}(\alpha,\varepsilon/2)$, then $\jjj_1\iii\jjj_2\in\Omega_n(\alpha,\varepsilon)$.
\end{proof}

\begin{corollary} \label{cor:skel}
Let $\mu$ be an ergodic measure supported on $E_\chi(\alpha)$. For $\eps>0$, let $n \in \N$ be such that $\Omega_n(\alpha,\eps)$ satisfies the claim in Lemma \ref{lem:skeleton}. Let $\nu$ be an $n$-step Bernoulli measure generated by the words in $\Omega_n(\alpha,\epsilon)$ with equally distributed probability and let $Z$ be as in Lemma \ref{lem:dominbound}. Then $h(\nu) \geq h(\mu)-\eps$ and
\begin{align*}
  \alpha_1 - \eps - Zn^{-1} &\leq -\tfrac 1\ell \log \norm{A_{\iii|_\ell}} \leq \alpha_1 + \eps + Zn^{-1}, \\
  \alpha_2 - \eps - Zn^{-1} &\leq -\tfrac 1\ell \log \norm{A_{\iii|_\ell}^{-1}}^{-1} \leq \alpha_2 + \eps + Zn^{-1},
\end{align*}
for $\nu$-almost all $\iii \in \Sigma$ and for all $\ell =kn$.
\end{corollary}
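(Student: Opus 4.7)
The plan is to split the corollary into two essentially independent pieces: the entropy bound and the two singular-value bounds, with the latter obtained by iterating Lemma~\ref{lem:dominbound} along blocks of length $n$.

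For the entropy estimate, I note that $\nu$ is by construction a Bernoulli measure on $(\Sigma,\sigma^n)$ whose generating probability vector is uniform on $\Omega_n(\alpha,\eps)$. Hence $h(\nu,\sigma^n)=\log\#\Omega_n(\alpha,\eps)$, and Lemma~\ref{lem:skeleton} gives $\#\Omega_n(\alpha,\eps)\geq e^{n(h(\mu)-\eps)}$. Using the identity $h(\tilde\nu,\sigma)=n^{-1}h(\nu,\sigma^n)$ recorded after \eqref{nbern} (interpreting $h(\nu)$ in the statement as $h(\tilde\nu,\sigma)$), this already yields $h(\nu)\geq h(\mu)-\eps$.

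For the Lyapunov bounds, for $\nu$-almost every $\iii$ I write $\iii=\iii_1\iii_2\cdots$ with each $\iii_j\in\Omega_n(\alpha,\eps)$, so that $\iii|_{kn}=\iii_1\cdots\iii_k$. By definition of $\Omega_n(\alpha,\eps)$, each factor satisfies $A_{\iii_j}\CC\subset\BB^o$, and since $\BB\subset\CC^o$ by Lemma~\ref{lem:consdomi}, a straightforward induction shows that $A_{\iii_1\cdots\iii_j}\CC\subset\BB^o$ for every $j$. Applying Lemma~\ref{lem:dominbound} iteratively to the pairs $(A_{\iii_1\cdots\iii_{j-1}},A_{\iii_j})$ I obtain
\[
e^{-Z(k-1)}\prod_{j=1}^{k}\|A_{\iii_j}\|\;\leq\;\|A_{\iii_1\cdots\iii_k}\|\;\leq\;\prod_{j=1}^{k}\|A_{\iii_j}\|,
\]
where the upper bound is just submultiplicativity of the norm. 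Combining this with the per-block estimate $e^{-n(\alpha_1+\eps)}<\|A_{\iii_j}\|<e^{-n(\alpha_1-\eps)}$ coming from the definition of $\Omega_n(\alpha,\eps)$, taking logarithms, and dividing by $\ell=kn$, yields the first pair of inequalities (even slightly stronger than stated, since only one side picks up the $Z/n$ correction).

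The bound on $\|A_{\iii|_\ell}^{-1}\|^{-1}$ then follows from the $2\times 2$ identity $\|A^{-1}\|^{-1}=|\det A|/\|A\|$ and multiplicativity of the determinant: the two-sided estimate above on $\|A_{\iii_1\cdots\iii_k}\|$ translates immediately into a two-sided estimate on $\|A_{\iii_1\cdots\iii_k}^{-1}\|^{-1}$, which together with the analogous per-block bound from $\Omega_n(\alpha,\eps)$ gives the claim. The only step that requires any care is the inductive check that $A_{\iii_1\cdots\iii_{j}}\CC\subset\BB^o$ throughout, so that Lemma~\ref{lem:dominbound} may indeed be applied at each stage; this is exactly where the design of $\Omega_n(\alpha,\eps)$ via Lemma~\ref{lem:consdomi} is used, and beyond this the argument is pure bookkeeping.
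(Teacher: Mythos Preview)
Your proposal is correct and follows exactly the approach indicated by the paper, whose proof consists of a single sentence pointing to Lemma~\ref{lem:dominbound} for the singular-value bounds and to the cardinality estimate in Lemma~\ref{lem:skeleton} for the entropy bound. Your write-up simply unpacks these two references in detail (including the inductive check that $A_{\iii_1\cdots\iii_j}\CC\subset\BB^o$, which is what makes the iterated use of Lemma~\ref{lem:dominbound} legitimate), so there is nothing to add.
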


\begin{proof}
The bounds on the singular values follow from Lemma \ref{lem:dominbound}, the entropy estimation follows from the bound on the size of $\Omega_n(\alpha,\eps)$.
\end{proof}

\begin{proof}[Proof of Proposition \ref{prop:wmeaslyap}]
We can now replace the measures $\mu_k$ by $n_k$-step Bernoulli measures $\nu_k$ given by Corollary \ref{cor:skel}, taking care that $\eps_k\downarrow 0$ and $n_k\to\infty$. We repeat the proof of Proposition \ref{prop:wmeasbirk}. The inequality \eqref{eqn:wmeas2} can be proved as before. The main problem is to obtain \eqref{eqn:wmeas1} -- we cannot apply the Birkhoff Theorem anymore since the singular values in a matrix cocycle are not multiplicative. For each measure $\nu_k$, Corollary \ref{cor:skel} gives us
\begin{align*}
  \log \norm{A_{\iii|_\ell}} &= \ell \chi_1(\mu_k) + O(\ell \epsilon_k, \ell Z/n_k, n_k \log H), \\
  \log \norm{A_{\iii|_\ell}^{-1}}^{-1} &= \ell \chi_2(\mu_k) + O(\ell \epsilon_k, \ell Z/n_k, n_k \log H),
\end{align*}
for $\nu_k$-almost all $\iii \in \Sigma$ and for all $\ell\in\N$. Thus we obtain \eqref{eqn:wmeas1} if $\varepsilon_k$ in the proof of Proposition \ref{prop:wmeasbirk} is replaced by $\eps_k + Z/n_k$. Note that we can choose $\eps_k$ and $n_k$ in the beginning so that $\eps_k + Z/n_k$ is as small as we wish, so this will not cause any problems.

The rest of the proof is virtually unchanged. We construct the measure $\nu$, prove that a positive part of this measure lives on $E_\chi(\alpha)$ (by using Lemma \ref{lem:dominbound} again), and show that it has entropy at least $\limsup_{k \to \infty} h(\nu_k)=\limsup_{k \to \infty} h(\mu_k)$.
\end{proof}

We can now apply Proposition \ref{prop:wmeaslyap} for the Hausdorff dimension.

\begin{proposition} \label{prop:w-dim}
Let $(\alpha_1^k,\alpha_2^k) = \alpha^k \to \alpha=(\alpha_1,\alpha_2)$ be a converging sequence of points in $\PP(\chi)$ such that $\alpha_1=\alpha_2$. Then
$$
\dimh(\pi E_\chi(\alpha)) \geq \limsup_{k\to\infty} \dimh(\pi E_\chi(\alpha^k))= \frac 1 {\alpha_2} \limsup_{k\to\infty} h_{\rm top}(E_\chi(\alpha^k)).
$$
\end{proposition}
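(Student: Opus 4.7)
The plan is to establish the right-hand equality and the left-hand inequality separately.

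For the equality, after passing to a subsequence I may assume, by convexity of $\PP(\chi)$, that every $\alpha^k$ lies in $\PP(\chi)^o$. Theorem~\ref{steppingstone} then expresses
$$\dimh(\pi E_\chi(\alpha^k))=\min\bigl\{h_\mathrm{top}(E_\chi(\alpha^k))/\alpha_1^k,\; 1+(h_\mathrm{top}(E_\chi(\alpha^k))-\alpha_1^k)/\alpha_2^k\bigr\},$$
and, since $\alpha_1=\alpha_2$ while $h_\mathrm{top}(E_\chi(\alpha^k))\le\log N$ is uniformly bounded, both arguments of the minimum differ from $h_\mathrm{top}(E_\chi(\alpha^k))/\alpha_2$ by $O(|\alpha^k-\alpha|)$. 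Taking the limsup yields the identity.

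For the inequality, I plan to combine Proposition~\ref{prop:wmeaslyap} with a Hausdorff-dimension refinement of the measure constructed in its proof. For each $k$, use Theorem~\ref{steppingstone} to pick an ergodic $\mu_k$ with $\chi(\mu_k)=\alpha^k$ and $\diml(\mu_k)\ge\dimh(\pi E_\chi(\alpha^k))-1/k$; then the first expression in the Lyapunov-dimension formula gives $h(\mu_k)\ge\alpha_1^k\diml(\mu_k)$. The construction in Proposition~\ref{prop:wmeaslyap}, built from the $n_k$-step Bernoulli skeletons of Lemma~\ref{lem:skeleton} and Corollary~\ref{cor:skel}, produces a concatenation measure $\nu$ carrying a positive-mass subset $F\subset E_\chi(\alpha)$ on which, for any prescribed $t<\limsup_k h(\mu_k)$ and every sufficiently large $n$,
$$\nu([\iii|_n])\le Ce^{-nt}\quad\text{and}\quad-\tfrac{1}{n}\log\|A_{\iii|_n}\|,\;-\tfrac{1}{n}\log\|A_{\iii|_n}^{-1}\|^{-1}\in(\alpha_1-\varepsilon,\alpha_1+\varepsilon).$$

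Because $\alpha_1=\alpha_2$, each cylinder image $f_{\iii|_n}(X)$ with $\iii\in F$ is sandwiched between two discs of radii comparable to $e^{-n\alpha_1}$, and the SOSC forces a uniformly bounded number of such level-$n$ projections to meet any Euclidean ball of radius $r\asymp e^{-n\alpha_1}$. A standard mass-distribution estimate then gives $\dimh(\pi_*(\nu|_F))\ge t/\alpha_1$; letting $t\uparrow\limsup_k h(\mu_k)$ and chaining with $h(\mu_k)\ge\alpha_1^k\diml(\mu_k)$ gives
$$\dimh(\pi E_\chi(\alpha))\ge\tfrac{1}{\alpha_1}\limsup_k h(\mu_k)\ge\limsup_k\diml(\mu_k)\ge\limsup_k\dimh(\pi E_\chi(\alpha^k)),$$
which is the desired inequality. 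The main technical difficulty lies in this last step: to run the mass-distribution argument one needs controlled $\nu$-mass of cylinders (entropy), near-conformality of cylinder images (both singular values pinched near $e^{-n\alpha_1}$), and SOSC-based disjointness of images to hold \emph{simultaneously} on a set of positive $\nu$-measure at scales $r\asymp e^{-n\alpha_1}$, which is precisely what the two-tiered Bernoulli-skeleton construction of Lemma~\ref{lem:skeleton} and Corollary~\ref{cor:skel} is designed to deliver.
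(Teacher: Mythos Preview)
Your approach coincides with the paper's: select ergodic measures $\mu_k$ with $\diml(\mu_k)$ close to $\dimh(\pi E_\chi(\alpha^k))$, use $h(\mu_k)\ge\alpha_2^k\diml(\mu_k)$, feed these into the construction behind Proposition~\ref{prop:wmeaslyap} to obtain a measure on $E_\chi(\alpha)$, and convert entropy to Hausdorff dimension via the near-conformality $\alpha_1=\alpha_2$. The paper compresses your explicit mass-distribution step into the single line ``by Bowen's definition of entropy, as $\alpha_1=\alpha_2$, we have $\dimh(\mu)=h(\mu)/\alpha_2$''.

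One caveat: your claim that convexity of $\PP(\chi)$ lets you pass to a subsequence with $\alpha^k\in\PP(\chi)^o$ is not justified as written --- a sequence lying entirely on $\partial\PP(\chi)$ has no such subsequence, and convexity does not rule this out. The paper's proof likewise invokes Theorem~\ref{steppingstone} (via the reference to Theorem~\ref{thm:mainlyapunovint}) to produce the ergodic $\mu_k$, and that theorem is only established for interior points, so both arguments tacitly assume $\alpha^k\in\PP(\chi)^o$; this is not a divergence between your approach and the paper's, but a shared implicit hypothesis.
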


\begin{proof}
Note first that, by Theorem \ref{thm:mainlyapunovint}, we can pick a sequence of ergodic measures $\mu_k$ supported on $E_\chi(\alpha_k)$ and $\eps_k \downarrow 0$ such that
\[
\diml(\mu_k) \geq \dim_H(\pi E_\chi(\alpha_k)) - \eps_k.
\]
By the definition of the Lyapunov dimension, we have
\[
h(\mu_k) \geq \alpha^k_2 \diml(\mu_k).
\]
Hence, by Proposition \ref{prop:wmeaslyap} we have a measure $\mu$ supported on $\pi E_\chi(\alpha)$ satisfying
\[
h(\mu) \geq \limsup_{k\to\infty} \alpha^k_2 \dimh(\pi E_\chi(\alpha^k)) = \alpha_2 \limsup_{k\to\infty} \dimh(\pi E_\chi(\alpha^k)).
\]
By the Bowen's definition of entropy, as $\alpha_1=\alpha_2$, we have $\dimh(\mu) = h(\mu)/\alpha_2$. This is what we wanted.
\end{proof}

The following proposition finishes the proof of Theorem \ref{thm:mainlyapunovint}. We note that the concavity of a function defined on a convex set implies the continuity of the function in the interior and on the flat portions of the boundary, and that the continuity of the entropy under the change of the Lyapunov exponent implies the continuity of the Lyapunov dimension.

\begin{proposition}\label{propconcave}
The set $\PP(\chi)$ is compact and convex, and the function $\alpha\mapsto h_{\rm top}(E_\chi(\alpha))$ is concave on
$$
\PP(\chi)^o\cup(\PP(\chi)\cap\{(\alpha_1,\alpha_2)\in\R^2:\alpha_1=\alpha_2\}.
$$
\end{proposition}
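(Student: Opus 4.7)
The plan is to establish the three conclusions --- compactness, convexity of $\PP(\chi)$, and concavity of $\alpha\mapsto h_{\rm top}(E_\chi(\alpha))$ on the prescribed set --- in that order, making essential use of the pressure identity from Theorem \ref{steppingstone}, the dominated subsystem construction of Section \ref{sec:subsys}, and the entropy transfer of Proposition \ref{prop:wmeaslyap}.

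Compactness splits into boundedness and closedness. Boundedness of $\PP(\chi)$ is immediate from the uniform inclusion $\PP(\chi)\subset [-\log\max_i\|A_i\|, \log\max_i\|A_i^{-1}\|]^2$, which comes from sub- and super-multiplicativity of the two singular values of $A_\iii$. For closedness, let $\alpha_k\in\PP(\chi)$ with $\alpha_k\to\alpha$. For each $k$, I apply Lemma \ref{lem:skeleton} and Corollary \ref{cor:skel} to pick an ergodic $n_k$-step Bernoulli measure $\nu_k$ supported on a dominated subsystem whose Lyapunov exponent satisfies $|\chi(\tilde\nu_k)-\alpha_k|<\eps_k$, and let $\eps_k\downarrow 0$ so that $\chi(\tilde\nu_k)\to\alpha$. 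Then Proposition \ref{prop:wmeaslyap} applied to $\{\tilde\nu_k\}$ produces an invariant measure supported on $E_\chi(\alpha)$, in particular forcing $\alpha\in\PP(\chi)$.

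For convexity, fix $\alpha,\beta\in\PP(\chi)$ and $t\in[0,1]$. Using Corollary \ref{cor:skel} as above, I pick $n$-step Bernoulli measures $\nu_{\alpha,n},\nu_{\beta,n}\in\MM_{\sigma^n}((\Sigma_n^\DD)^\N)$ with $\chi(\tilde\nu_{\alpha,n})\to\alpha$ and $\chi(\tilde\nu_{\beta,n})\to\beta$. Since the truncated cocycle $(A_\iii)_{\iii\in\Sigma_n^\DD}$ is dominated, Proposition \ref{thm:BGprop} identifies the Lyapunov exponent on $\MM_{\sigma^n}((\Sigma_n^\DD)^\N)$ with integration against the continuous vector potential $\Psi_n$, so $\chi(\cdot,\sigma^n)$ is affine there. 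Setting $\nu_n=t\nu_{\alpha,n}+(1-t)\nu_{\beta,n}$ gives
\[
\chi(\tilde\nu_n,\sigma)=t\chi(\tilde\nu_{\alpha,n},\sigma)+(1-t)\chi(\tilde\nu_{\beta,n},\sigma)\to t\alpha+(1-t)\beta,
\]
which belongs to $\PP(\chi)$ by the closedness established in the previous paragraph.

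For concavity, I would express $h_{\rm top}(E_\chi(\alpha))$ as a Legendre transform. The cover argument in the proof of Proposition \ref{thm:lyapupper} specialized to $s=0$ yields
\[
P(\log\psi^{-q})-\langle q,\alpha\rangle\ge h_{\rm top}(E_\chi(\alpha))
\]
for every $q\in\R^2$, while the matching reverse bound (after infimizing in $q$) follows from the variational principle together with the equilibrium-state construction used in Theorem \ref{steppingstone} and the dominated approximation of Lemma \ref{lem:pressapproxlyap}. These inequalities give
\[
h_{\rm top}(E_\chi(\alpha))=\inf_{q\in\R^2}\{P(\log\psi^{-q})-\langle q,\alpha\rangle\}
\]
for every $\alpha\in\PP(\chi)^o$, which is an infimum of affine functions of $\alpha$ and hence concave on $\PP(\chi)^o$. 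To extend concavity to a diagonal boundary point $\gamma\in\PP(\chi)\cap\{\alpha_1=\alpha_2\}$, I approach $\gamma$ by points of $\PP(\chi)^o$ and invoke the upper semicontinuity of the topological entropy supplied by Proposition \ref{prop:wmeaslyap} to preserve the concavity inequality in the limit. The principal obstacle is the reverse direction of the Legendre identity: in the non-dominated setting the singular value function is not submultiplicative in a useful way, so the variational principle must be transferred to a dominated subsystem via Lemma \ref{lem:pressapproxlyap} and then passed to the limit.
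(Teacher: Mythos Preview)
Your compactness and convexity arguments are fine and essentially parallel the paper's use of Proposition~\ref{prop:wmeaslyap}. For concavity, however, your route diverges substantially from the paper's and has a real gap. The paper does \emph{not} go through a Legendre identity; instead it fixes any $\alpha,\beta\in\PP(\chi)$ and $\ell\in(0,1)$, takes large collections $D_{m_i,r,\alpha}$, $D_{n_j,r,\beta}$ of words whose normalised singular values are $\tfrac1r$-close to $\alpha$ and $\beta$, uses Lemma~\ref{lem:consdomi} to decorate each with a bounded prefix/suffix so that the resulting words all map a fixed multicone into itself, and then concatenates them along a Sturmian sequence of slope $n_j\ell/(n_j\ell+m_i(1-\ell))$. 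The uniform cone condition (Lemma~\ref{lem:dominbound}) makes the singular values almost multiplicative along these concatenations, so the resulting measure has Lyapunov exponent within $3/r$ of $\ell\alpha+(1-\ell)\beta$ and entropy at least $\ell h_{\rm top}(E_\chi(\alpha))+(1-\ell)h_{\rm top}(E_\chi(\beta))-2/r$; Proposition~\ref{prop:wmeaslyap} then passes $r\to\infty$. This works uniformly for \emph{all} $\alpha,\beta\in\PP(\chi)$ at once, with no separate interior/boundary analysis.

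Your Legendre route, by contrast, requires the identity $h_{\rm top}(E_\chi(\alpha))=\inf_{q}\{P(\log\psi^{-q})-\langle q,\alpha\rangle\}$, and you only sketch the harder direction (``follows from the variational principle together with the equilibrium-state construction used in Theorem~\ref{steppingstone}''). Theorem~\ref{steppingstone} gives $h_{\rm top}(E_\chi(\alpha))=\sup\{h(\mu):\chi(\mu)=\alpha\}$ on $\PP(\chi)^o$, but turning that into the infimum over $q$ is a genuine minimax step you have not carried out. More seriously, your extension to diagonal boundary points does not close: upper semicontinuity of $\alpha\mapsto h_{\rm top}(E_\chi(\alpha))$ at $\gamma$ (which is indeed what Proposition~\ref{prop:wmeaslyap} yields) is not by itself enough to propagate a concavity inequality from $\PP(\chi)^o$ to a boundary point, because the relevant inequality $h(\gamma)\ge t\,h(\alpha)+(1-t)h(\beta)$ must be checked for $\alpha,\beta$ that may themselves lie on the diagonal boundary, where your interior formula is unavailable. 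The paper's direct construction sidesteps all of this by never distinguishing interior from boundary.
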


\begin{proof}
The compactness of $\PP(\chi)$ follows from Proposition \ref{prop:wmeaslyap}, the other properties will be proven together.
Let $\alpha, \beta \in \PP(\chi)$ and choose $\ell \in (0,1)$. We have to show that
\[
h_{\rm top}\left(E_\chi(\ell \alpha + (1-\ell)\beta)\right) \geq \ell h_{\rm top}(E_\chi(\alpha)) + (1-\ell) h_{\rm top}(E_\chi(\beta)).
\]
For a fixed large $r$ we define
\[
D_{m,r, \alpha}=\{\iii\in\Sigma_m:|-\tfrac{1}{m}\log\psi^{(1,1)}(\iii)-\alpha|<\tfrac{1}{r}\}
\]
and similarly $D_{m,r,\beta}$. By the definition of the topological entropy, there exist two sequences $(m_i)_{i \in \N}$ and $(n_j)_{j \in \N}$ of positive integers such that
\begin{align*}
  |D_{m_i,r,\alpha}| &> e^{m_i(h_{\rm top}(E_\chi(\alpha))-1/r)}, \\
  |D_{n_j,r,\beta}| &> e^{n_j(h_{\rm top}(E_\chi(\beta))-1/r)},
\end{align*}
for all $i,j \in \N$.

Fix $i,j \in \N$. By Lemma \ref{lem:consdomi}, there exist multicones $\BB$ and $\CC$ such that $\BB\subset \CC^o$ and for every word $\iii\in D_{m_i,r, \alpha}$ we can add a prefix $\jjj_1$ and a suffix $\jjj_2$ of fixed length $K$ such that the resulting word $\jjj=\jjj_1 \iii \jjj_2$ satisfies $A_\jjj\CC \subset \BB^o$. We denote this set of words by $D'_{m_i, r, \alpha}$. The same statement, with the same choice of $\BB$, $\CC$, and $K$, holds for $D_{n_j,r,\beta}$. Note that, as the prefixes are of fixed length, different $\iii$'s produce different $\jjj$'s.
For $\jjj\in D'_{m_i,r, \alpha}$, we have
\[
\biggl|-\frac{1}{m_i+2K}\log\psi^{(1,1)}(\jjj)-\alpha\biggr|<\frac{1}{r} + O\biggl(\frac {2K} {m_i}\biggr) < \frac 2r
\]
for $i$ large enough. A similar estimate holds for $\jjj \in D'_{n_j,r,\beta}$.

Consider now a Sturmian sequence $\iii=\iii(i,j,\ell)$ with average $n_j \ell /(n_j \ell + m_i (1-\ell))$. Let $Q$ be the set of all possible sequences in $\Sigma$ obtained from $\iii$ by replacing all the $1$'s by some elements of $D'_{m_i, r, \alpha}$ and all the $0$'s by some elements of $D'_{n_j, r, \beta}$. Let $\mu$ be the measure on $Q$ obtained by assuming that in this construction all the words are used with the same probability. It follows that
\begin{align*}
h_{\rm top}(Q) &\geq h(\mu) \\
 &\geq \biggl(1-\frac K {\min(m_i, n_j)}\biggr) \biggl(\ell h_{\rm top}(E_\chi(\alpha)) + (1-\ell) h_{\rm top}(E_\chi(\beta)) - \frac 1r\biggr) \\
 &\geq \ell h_{\rm top}(E_\chi(\alpha)) + (1-\ell) h_{\rm top}(E_\chi(\beta)) - \frac 2r
\end{align*}
for all $i,j \in \N$ large enough. By Lemma \ref{lem:dominbound}, we can also write
\[
|\chi(\mu) - \ell \alpha - (1-\ell) \beta| \leq \frac 2r + \frac Z {\max(m_i, n_j)} \leq \frac 3r
\]
for all $i,j \in \N$ large enough. This means that there exists $\gamma_r$ with $|\gamma_r - \ell \alpha - (1-\ell)\beta| < 3/r$ such that
\[
h_{\rm top}(E_\chi(\gamma_r)) \geq \ell h_{\rm top} E_\chi(\alpha) + (1-\ell) h_{\rm top} E_\chi(\beta) - \frac 2r.
\]
We now pass with $r$ to infinity, repeating this procedure. By Proposition~\ref{prop:wmeaslyap}, we obtain a sequence $\gamma_r \to \ell \alpha + (1-\ell)\beta$ such that
\begin{align*}
  \ell h_{\rm top}(E_\chi(\alpha)) + (1-\ell) h_{\rm top}(E_\chi(\beta)) &\leq \limsup_{r\to\infty}h_{\rm top}(E_\chi(\gamma_r)) \\
  &\leq h_{\rm top}(E_\chi(\ell\alpha + (1-\ell) \beta)).
\end{align*}
This is what we wanted to show.
\end{proof}

\subsection*{Acknowledgements}

This work began while all the authors were participating the program ``Fractal Geometry and Dynamics'' at the Mittag-Leffler Institut in November and December, 2017. We thank the hospitality of the Mittag-Leffler Institut, as well as the Budapest University of Technology and the University of Bristol, where the research took place. B\'ar\'any acknowledges support from the grants OTKA K123782, NKFI PD123970, and the J\'anos Bolyai Research Scholarship of the Hungarian Academy of Sciences. K\"aenm\"aki was supported by the Finnish Center of Excellence in Analysis and Dynamics Research, the Finnish Academy of Science and Letters, and the V\"ais\"al\"a Foundation. Rams was supported by National Science Centre grant 2014/13/B/ST1/01033 (Poland). The authors also express their gratitude to the anonymous referee for his/her valuable comments and suggestions for future studies.



\end{document}